\renewcommand*\l@author[2]{}
\renewcommand*\l@title[2]{}
\pgfplotsset{compat=1.13}
\newcommand{\labeltext}[3][]{%
    \@bsphack%
    \csname phantomsection\endcsname%
    \def\tst{#1}%
    \def\refmarkup{}%
    \ifx\tst\empty\def\@currentlabel{\refmarkup{#2}}{\label{#3}}%
    \else\def\@currentlabel{\refmarkup{#1}}{\label{#3}}\fi%
    \@esphack%
    #2%
}
\newtheorem*{definition*}{Definition}
\newtheorem*{theorem*}{Theorem}
\title{Span(Graph): a Canonical Feedback Algebra of Open Transition Systems \thanks{Di Lavore, Román and Sobociński were supported by the European Union
 through the ESF funded Estonian IT Academy research measure (2014-2020.4.05.19-0001). This work was also supported by the Estonian Research
Council grant PRG1210.}}
\author{
  Elena Di Lavore \inst{1} \and
  Alessandro Gianola \inst{2} \and
  Mario Román \inst{1} \and
  Nicoletta Sabadini \inst{3}\and
  Paweł Sobociński \inst{1}}
\authorrunning{Di Lavore, Gianola, Román, Sabadini, Sobociński}
\institute{
  Tallinn University of Technology, Ehitajate tee 5, 12616 Tallinn, Estonia
  \and
  Free University of Bozen-Bolzano, Piazza Domenicani, 3, 39100 Bolzano BZ, Italy
  \and
  Universit\`a degli Studi dell'Insubria, Via Ravasi, 2, 21100 Varese VA, Italy
}
\begin{document}

\maketitle

\begin{abstract}
  We show that \(\SpanGraphV\), an algebra for \emph{open transition systems} introduced by Katis, Sabadini and Walters, satisfies a universal property. By itself, this is a justification of the canonicity of this model of concurrency.
  However,
  the universal property is itself of interest, being a formal demonstration of the relationship between feedback and state.

  Indeed, \emph{feedback} categories, also originally proposed by Katis, Sabadini and Walters, are a weakening of traced monoidal categories, with various applications in computer science. A \emph{state bootstrapping} technique, which has appeared in several different contexts, yields \emph{free} such categories.

  We show that \(\SpanGraphV\) arises in this way, being the
  free feedback category over \(\Span(\Set)\). Given that the latter can be seen as an algebra of predicates, the algebra of open transition systems thus arises -- roughly speaking -- as the result of bootstrapping state to that algebra.

  Finally, we generalize feedback categories endowing state spaces with extra structure: this extends the framework from mere transition systems to automata with initial and final states. 
\end{abstract}
\setcounter{tocdepth}{2}
\tableofcontents
\newpage

\section{Introduction}\label{sec:intro}
Software engineers need models. In fact, models developed
in the early years of computer science have been extremely influential on the emergence of software engineering as a discipline. Prominent examples include flowcharts and state machines, and a part of the reason for their impact and longevity is the fact that they are underpinned by relevant and well-understood mathematical theories.

However, while \emph{concurrent} software has been intensively studied since the early 60s, the theoretical research landscape remains quite fragmented. Indeed,
Abramsky~\cite{abramsky14} argues that the reason for the proliferation of models, their sometimes overly locally-optimised techniques, and the difficulty of understanding and relating their expressivity, is the fact that we still do not have a satisfactory understanding of the underlying mathematical principles of concurrency.

A way to identify such principles and arrive at more canonical models is to look for logical or mathematical justifications. An example is the recent discovery and work on of Curry-Howard style connections between calculi for concurrency and fragments of linear logic, which guided the development of session types~\cite{dezani09}. Another possible route is to search for models that satisfy some \emph{universal property}.

The latter approach is the remit of this paper:
we focus on the $\SpanGraphV$ model of concurrency, introduced by Katis, Sabadini and Walters~\cite{katis97} as an algebra of \emph{open transition systems}, and
show that it satisfies a universal property:
it is the free \categoryWithFeedback{} over the category of spans of functions.

The free construction is in itself interesting and can be described as a kind of ``state-bootstrapping''. We thus position our main result within the theoretical context of \categoriesWithFeedback{}, their relationship with state, and the more restrictive---yet better known---notion of \tracedMonoidalCategories{}. Our exploration of this wider context is justified, given the panoply of related, yet partial, accounts in the literature.

\begin{wrapfigure}{r}{5cm}
  \centering
  \norlatchwithlabels
  \caption{NOR latch.}
  \label{fig:norintro}
\end{wrapfigure}

\medskip
The relationship between feedback and state is well-known by engineers. In fact,
a remarkable fact from electronic circuit design is how data-storing components
can be built out of a combination of \emph{stateless components} and
\emph{feedback}. A famous example is the (set-reset) ``NOR latch'': a circuit with two stable configurations that \emph{stores} one bit. %

The NOR latch is controlled by two inputs, $\mathsf{Set}$
and $\mathsf{Reset}$. Activating the first sets the output value to
$\mathsf{A} = \mathsf{1}$; activating the second makes the output value return
to $\mathsf{A} = \mathsf{0}$. This change is permanent: even when both
$\mathsf{Set}$ and $\mathsf{Reset}$ are deactivated, the feedback loop maintains
the last value the circuit was set to\footnote{In its original description:
  \emph{``the relay is designed to produce a large and \textbf{permanent} change
    in the current flowing in an electrical circuit by means of a small
    electrical stimulus received from the outside''} (\cite{eccles18}, emphasis
  added).}---to wit, a bit of data has been conjured out of thin air. %
The results of this paper allow one to see the latch as
an instance of a more
abstract phenomenon.

Indeed, there is a natural weakening of the notion of \tracedMonoidalCategories{} called
\emph{feedback categories} \cite{katis02}. The construction of the
\emph{free} feedback category coincides with a ``state-bootstrapping'' construction,
$\St(\bullet)$, that appears in several different contexts in the literature
\cite{bonchi19,hoshino14,sabadini95}. We recall this construction and its mathematical status (\Cref{th:storeisfree}), which can be summed up by the following intuition:
\[\mbox{Theory of Processes} + \mbox{Feedback} = \mbox{Theory of Stateful Processes}.\]

\medskip
The $\SpanGraph$ model of concurrency, introduced in~\cite{katis97}, is an algebra of \emph{communicating state machines}, or --- equivalently --- \emph{open transition systems}.

Let us first explain some terminology.
A span \(X \to Y\) in a category \(\catC\) is a pair of morphisms \(l \colon A \to X\) and \(r \colon A \to Y\) with a common domain (\Cref{def:span}).
When \(\catC\) has enough structure, spans form a category.
This is the case for the category of graphs \(\Graph\), where objects are graphs and morphisms are, intuitively, pairs of functions that respect the graph structure (\Cref{def:cat-graph}).
Summarizing the above, the morphisms of \(\SpanGraph\) are given by pairs of graph homomorphisms, \(l \colon G \to X\) and \(r \colon G \to Y\), with a common domain \(G\).
We think of a span of graphs as a transition system, the graph \(G\), with boundary interfaces \(X\) and \(Y\).

Open transition systems %
interact
by synchronization along a common boundary, producing a
 \emph{simultaneous change of state}.
This corresponds to a composition of spans, realized by taking a pullback in $\Graph$ (see \Cref{def:open-transition-system}).
The dual algebra of
$\Cospan(\Graph)$ was introduced in~\cite{katis00} (see \Cref{def:graph-discrete-boundaries}).

Informally, a morphism \(X \to Y\) of $\SpanGraph$ is a state machine with states and
transitions, i.e. a finite graph given by the `head' of the span. The transition system
is equipped with left and right interfaces or \emph{communication ports}, \(X\) and \(Y\), and every
transition is labeled by the effect it produces in \emph{all}
its interfaces. Let us focus on some concrete examples.

Let $\Bool=\{\,0,\,1\,\}$. We abuse notation by considering $\Bool$ as a single-vertex graph with two edges, corresponding to the \emph{signals} $0$ and $1$. Indeed, as we shall see in examples below, it is useful to think of single-vertex graphs as alphabets of signals available on interfaces.

In \Cref{fig:norandlatch}, we depict two open transition systems as arrows of $\SpanGraph$. The first represents a NOR gate $\Bool \times \Bool \to \Bool$.
To give an arrow of this type in $\SpanGraph$ is to give a span of graph homomorphisms \[\Bool \times \Bool \overset{l}{\longleftarrow} N \overset{r}{\longrightarrow} \Bool.\]
The graphical rendering (\Cref{fig:norandlatch}, left) is a compact representation of the components of this span: the \emph{unlabeled} graph in the bubble is $N$, and the labels witness the action of two homomorphisms, respectively $l \colon N\rightarrow \Bool\times\Bool$ and $r \colon N\rightarrow \Bool$.
Transitions represent the valid input/output configurations of the NOR gate. For example, the edge %
with label \((\binom{0}{0},1)\), witnesses a transition whose behaviour on the left boundary is %
\(\binom{0}{0}\) and on the right boundary %
\(1\). Note that, since the graph $N$ has a single vertex, gates are \emph{stateless} components.

The second component is a span
$L=\{ \LatchSet , \LatchReset , \LatchIdle \} \to \{ \LatchA, \LatchNotA \}=R$
that models a set-reset latch.
The diagram below right (\Cref{fig:norandlatch}), again, is a convenient illustration of the span $L \leftarrow D \rightarrow R$. Latches store one bit of information, they are \emph{stateful components};
consequently, their transition graph has two states.
\begin{figure}[H]
  \centering
  \examplesSpanGraph{}
  \caption{A NOR gate and set-reset latch, in $\protect\SpanGraph$.}\label{fig:norandlatch}
\end{figure}

In both transition systems of \Cref{fig:norandlatch} the \emph{interfaces}
 are stateless: indeed, they are determined by a mere set -- the self-loops of a single-vertex graph.
This is a restriction that
occurs rather frequently:
in fact, \emph{transition systems with interfaces} are the arrows of the full
subcategory of $\SpanGraph$ on objects that are single-vertex graphs, which we denote by $\SpanGraphV$.
The objects of \(\SpanGraphV\) represent interfaces, and a morphism \(X \to Y\) encodes a transition system with left interface \(X\) and right interface \(Y\).
Analogously,
the relevant subcategory of $\Cospan(\Graph)$ is $\CospanGraphE$, the full
subcategory on sets, or graphs with an empty set of edges.

\begin{definition*}
  $\SpanGraphV$ is the full subcategory of $\SpanGraph$ with objects the single-vertex graphs.
\end{definition*}

\smallskip
The problem with $\SpanGraphV$ is that it is mysterious from the categorical point of view; the morphisms are graphs, but the boundaries are sets.
\emph{Decorated} and \emph{structured} spans and cospans \cite{fong15,baez19} are
  frameworks that capture such phenomena, which occur frequently when composing network structures. Nevertheless, they do not answer the question of \emph{why} they arise naturally.

\medskip
As stated previously, the main contribution of this paper is the
characterization of $\SpanGraphV$ in terms of a universal property: it is the free \categoryWithFeedback{} over the category of spans of functions. We now state this more formally.

\begin{theorem*}
 The free \categoryWithFeedback{} over \(\Span(\Set)\) is isomorphic to the full subcategory of \(\Span(\Graph)\) given by single-vertex graphs, $\SpanGraphV$.
 That is, there is an isomorphism of categories
 \[\St(\Span(\Set)) \cong \SpanGraphV.\]
\end{theorem*}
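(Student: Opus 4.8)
The plan is to exhibit an explicit identity-on-objects isomorphism of categories, by unfolding both sides to literally the same combinatorial data and then matching the composition laws. On objects there is nothing to do: an object of \(\St(\Span(\Set))\) is a set, while an object of \(\SpanGraphV\) is a single-vertex graph, i.e. a set of self-loops; I identify a set \(X\) with the single-vertex graph whose edges are \(X\).

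For the hom-sets I would use the description of the free feedback category from \Cref{th:storeisfree}: a morphism \(X \to Y\) of \(\St(\Span(\Set))\) is (an equivalence class of) a pair \((S,f)\), where \(S\) is a set of \emph{states} and \(f\) is a span \(S \times X \leftarrow T \rightarrow S \times Y\) in \(\Set\). Projecting the legs of \(f\) yields a set of \emph{transitions} \(T\) together with a source and a target map \(T \to S\), a left label \(T \to X\) and a right label \(T \to Y\). This is exactly the data of a morphism of \(\SpanGraphV\): a span of graphs \(X \leftarrow G \rightarrow Y\) with \(X,Y\) single-vertex is a graph \(G\) (vertices \(=\) states \(S\), edges \(=\) transitions \(T\), with its source/target maps) whose edges are labelled in \(X\) and in \(Y\). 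So at the level of representatives the two hom-sets carry identical data under the dictionary \(S \leftrightarrow V_G\), \(T \leftrightarrow E_G\).

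The heart of the argument is to check that the two \emph{quotients} agree. On the \(\SpanGraphV\) side a morphism is a span taken up to isomorphism of its apex graph \(G\), that is, a bijection of vertices together with a bijection of edges commuting with source, target and both labels. On the \(\St(\Span(\Set))\) side the quotient is generated by isomorphism of the apex \(T\) of \(f\) (span-equivalence already present in \(\Span(\Set)\)) together with the feedback \emph{sliding} relation along the state object. This is precisely where it matters that we work in a \emph{feedback} rather than a traced setting: sliding is permitted only along \emph{isomorphisms} of \(S\), so it contributes exactly a bijective relabelling of states and nothing more. I expect this to be the main obstacle: one must verify that the \(\St\)-equivalence is \emph{neither coarser nor finer} than graph isomorphism. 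Coarseness in particular must be excluded -- were arbitrary, non-invertible sliding allowed, as for a trace, one could merge or delete states, e.g. identify the two-state and one-state transition systems with no transitions, which are genuinely distinct morphisms of \(\SpanGraphV\); the restriction to iso-sliding is exactly what blocks this and makes the statement an isomorphism on the nose.

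Finally I would check functoriality. The identity on \(X\) in \(\St(\Span(\Set))\) takes the state to be the monoidal unit (a single state) with \(f\) the identity span, which under the dictionary is the single-vertex graph \(X\) serving as the apex of the identity span in \(\SpanGraphV\). For composition, the \(\St\)-construction composes \((S_1,f_1)\) and \((S_2,f_2)\) by taking the product state space \(S_1 \times S_2\) and synchronizing the two spans along the shared boundary: the transitions of the composite are pairs of transitions agreeing on their label in the middle boundary, with source and target taken componentwise. In \(\SpanGraphV\) composition is pullback of spans in \(\Graph\); since the middle object is a single-vertex graph, the pullback graph has vertex set \(V_G \times V_H\) and edges the pairs of edges agreeing on the shared label, again with componentwise source and target. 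These two recipes coincide term for term. Hence the dictionary defines an identity-on-objects functor that is a bijection on each hom-set, giving the claimed isomorphism of categories \(\St(\Span(\Set)) \cong \SpanGraphV\).
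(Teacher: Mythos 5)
Your proposal follows essentially the same route as the paper's proof of \Cref{theorem:spangraphfreefeedback}: an explicit functor, bijective on objects, that unfolds a stateful span \((\sS \mid \sS \times X \gets T \to \sS \times Y)\) into the graph with vertices \(\sS\) and edges \(T\) carrying left and right labels, followed by a verification that the two equivalence relations coincide and that composition on both sides is the same componentwise pullback (the paper does the latter via \Cref{lemma:composingstatefulspans} and the fact that pullbacks in \(\Graph\) are computed separately on edges and vertices). The outline and all of its ingredients are correct.

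One step you assert rather than prove --- and which the paper isolates as a standalone lemma, \Cref{prop:isospans} --- is the claim that sliding ``contributes exactly a bijective relabelling of states and nothing more''. In \(\St(\Span(\Set))\) the state space is an object of \(\Span(\Set)\), so the sliding relation is taken along isomorphisms \emph{in the span category}: a priori such an isomorphism is an equivalence class of spans \(\sS \gets U \to \sT\), not visibly a bijection of sets. The paper proves that every isomorphism in \(\Span(\catC)\) can be represented with an identity as its left leg and an isomorphism of \(\catC\) as its right leg; only with this in hand does your ``neither coarser nor finer'' comparison reduce to matching graph isomorphisms, by decomposing a graph isomorphism \((h,k)\) into a slide along the vertex bijection \(k\) followed by a head-equivalence along the edge bijection \(h\) --- exactly the two-step decomposition in the paper's faithfulness argument. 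This is a small and entirely fillable gap, but since you correctly identified the quotient comparison as the heart of the argument, this characterization of isomorphisms of spans is the one sub-lemma your proposal still owes.
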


Universal constructions, such as the ``state-bootstrapping'' $\St(\bullet)$ construction that yields free categories with feedback, characterize the object of interest up to equivalence, making it \emph{the canonical object} satisfying some properties.
Recall that Abramsky's concern~\cite{abramsky14} is that the lack of consensus about the intrinsic primitives of concurrency risks making the results about any particular model of concurrency too dependent on the specific syntax employed. Characterising a model as satisfying a universal property side-steps this concern.

Given that $\Span(\Set)$, the category of spans of functions, can be considered an \emph{algebra of predicates}~\cite{benabou67,carboni87}, the high level intuition that summarizes our main contribution (\Cref{theorem:spangraphfreefeedback}) can be stated as:
\[\mbox{Algebra of Predicates} + \mbox{Feedback} = \mbox{Algebra of Transition Systems}.\]

We similarly prove (in \Cref{sec:cospangraphfeedback}) that the free \hyperlink{linkcategorywithfeedback}{feedback category} over \(\Cospan(\Set)\) is isomorphic to $\CospanGraphE$, the
full subcategory on discrete graphs of $\Cospan(\Graph)$.

Finally, \Cref{sec:generalizingfeedback} shows how the same framework of \categoriesWithFeedback{} can be extended from transition systems to categories with a structured state space (\Cref{th:statestructured}), such as categories of automata.
As examples, we recover Mealy deterministic finite automata (\Cref{prop:mealydeterministicautomata}) and we introduce span automata (\Cref{def:spanautomata}).

\subsection{Related Work}
This article is an extended version of ``A Canonical Algebra of Open Transition Systems''~\cite{acanonicalalgebra}, presented at the International Conference on Formal Aspects of Component Software (FACS) 2021.
With respect to the conference version, we significantly generalised the framework of \categoriesWithFeedback{}:
\Cref{sec:generalizingfeedback} is completely new material.
At the same time, \Cref{section:preliminaries,section:spangraph} extend the original manuscript adding new proofs (to \cref{lemma:composingstatefulspans,prop:isospans,lemma:spanwelldefined,theorem:spangraphfreefeedback}) and giving a more complete account of the algebra of spans (\Cref{sec:algebraofspans,sec:algebratransition}). In an effort to make the paper more self-contained, we also include a new preliminary \Cref{sec:background-cats}, which summarises the necessary concepts from category theory.

\medskip
$\Span/\Cospan(\Graph)$ has been used for the modeling of concurrent systems \cite{Bruni2011,gianola20a,gianola20b,gianola17,katis97,katis00,sabadini17,Soboci'nski2009a,Sobocinski2010}.
Similar approaches to
compositional modeling of networks have used \emph{decorated} and
\emph{structured cospans} \cite{fong15,baez19}. However, these models have not previously been characterized in terms of a universal property.

In \cite{katis02}, the $\St(\bullet)$ construction (under a different name) is
exhibited as the free \emph{feedback category}. Feedback categories
have been arguably under-appreciated but, at the same time, the $\St(\bullet)$
construction has made multiple appearances as a ``state bootstrapping''
technique across the literature. The $\St(\bullet)$ construction is used to
describe a string diagrammatic syntax for \emph{concurrency theory} in
\cite{bonchi19}; a variant of it had been previously applied in the setting of
\emph{cartesian bicategories} in~\cite{sabadini95}; and it was again rediscovered to describe a \emph{memoryful geometry of interaction} in~\cite{hoshino14}.
However, a coherent account of both feedback categories and their relation with these stateful extensions has not previously appeared.
This motivates our extensive preliminaries in \Cref{section:trace,section:feedback}.

\subsection{Synopsis}
\Cref{sec:background-cats} consists of background material on symmetric monoidal categories and equivalences between them.
\Cref{section:preliminaries} contains preliminary discussions on \tracedMonoidalCategories{} and \hyperlink{linkcategorywithfeedback}{categories with feedback}; it explicitly describes $\St(\bullet)$, the free \categoryWithFeedback{}.
It collects mainly expository material. \Cref{section:spangraph} exhibits a universal property for the $\SpanGraphV$ and $\CospanGraphE$ models of concurrency and \Cref{section:applications} highlights a specific application.
\Cref{sec:generalizingfeedback} extends the framework of \categoriesWithFeedback{} to capture categories of automata.

\section{Preliminaries: Symmetric Monoidal Categories}
\label{sec:background-cats}
\subsection{Theories of Processes}\label{sec:theories-processes}

\paragraph{Resources and processes.} We start by setting up an abstract framework for what it means to describe a theory of processes. A theory of processes contains two kinds of components: some \emph{resource types}, which we name $A, B, C, \dots$; and some \emph{processes}, which we name $f, g, h, \dots$.

Each process $f$ has an associated input resource type (say, $A$); and an associated output resource type (say, $B$). Executing the process $f$ will require some inputs of type $A$ and will produce some outputs of type $B$. We write this situation as $f \colon A \to B$.

Throughout the paper, we make use of \emph{string diagrams}: a formal diagrammatic syntax for theories of processes \cite{joyal96,maclane78}. In a diagram, every ocurrence of a resource type is represented by a laballed wire; every process is represented by a box, with input wires representing its input type on the left, and output wires representing its output type on the right (\Cref{fig:stringdiagramf}).
\begin{figure}
    \centering

\tikzset{every picture/.style={line width=0.75pt}} %

\begin{tikzpicture}[x=0.75pt,y=0.75pt,yscale=-1,xscale=1]
\draw [color={rgb, 255:red, 0; green, 0; blue, 80 }  ,draw opacity=1 ]   (210,80) .. controls (207.56,95.16) and (198.95,96.02) .. (223.08,99.71) ;
\draw [shift={(225,100)}, rotate = 188.48] [color={rgb, 255:red, 0; green, 0; blue, 80 }  ,draw opacity=1 ][line width=0.75]    (10.93,-3.29) .. controls (6.95,-1.4) and (3.31,-0.3) .. (0,0) .. controls (3.31,0.3) and (6.95,1.4) .. (10.93,3.29)   ;
\draw    (180,60) -- (200,60) ;
\draw  [fill={rgb, 255:red, 255; green, 255; blue, 255 }  ,fill opacity=1 ] (200,45) -- (220,45) -- (220,75) -- (200,75) -- cycle ;
\draw    (220,60) -- (240,60) ;
\draw [color={rgb, 255:red, 0; green, 0; blue, 80 }  ,draw opacity=1 ]   (245,70) .. controls (247.27,77.96) and (243.69,79) .. (268.07,79.93) ;
\draw [shift={(270,80)}, rotate = 182.05] [color={rgb, 255:red, 0; green, 0; blue, 80 }  ,draw opacity=1 ][line width=0.75]    (10.93,-3.29) .. controls (6.95,-1.4) and (3.31,-0.3) .. (0,0) .. controls (3.31,0.3) and (6.95,1.4) .. (10.93,3.29)   ;
\draw [color={rgb, 255:red, 0; green, 0; blue, 80 }  ,draw opacity=1 ]   (175,70) .. controls (177.27,77.96) and (173.85,80.06) .. (151.74,80.01) ;
\draw [shift={(150,80)}, rotate = 0.4] [color={rgb, 255:red, 0; green, 0; blue, 80 }  ,draw opacity=1 ][line width=0.75]    (10.93,-3.29) .. controls (6.95,-1.4) and (3.31,-0.3) .. (0,0) .. controls (3.31,0.3) and (6.95,1.4) .. (10.93,3.29)   ;

\draw (210,60) node    {$f$};
\draw (227,100) node [anchor=west] [inner sep=0.75pt]  [color={rgb, 255:red, 0; green, 0; blue, 80 }  ,opacity=1 ]  {$process$};
\draw (272,80) node [anchor=west] [inner sep=0.75pt]  [color={rgb, 255:red, 0; green, 0; blue, 80 }  ,opacity=1 ]  {$output$};
\draw (148,80) node [anchor=east] [inner sep=0.75pt]  [color={rgb, 255:red, 0; green, 0; blue, 80 }  ,opacity=1 ]  {$input$};
\draw (178,60) node [anchor=east] [inner sep=0.75pt]  [font=\scriptsize]  {$A$};
\draw (242,60) node [anchor=west] [inner sep=0.75pt]  [font=\scriptsize]  {$B$};

\end{tikzpicture}
     \caption{String diagram for a process $f \colon A \to B$.}
    \label{fig:stringdiagramf}
\end{figure}

\paragraph{Operations in a theory of processes.}
Theories of processes allow two operations on processes: sequential composition $(\comp)$ and parallel composition $(\tensor)$. The former is depicted as horizontal concatenation of diagrams, the latter as vertical juxtaposition.
\begin{adjustbox}{center=\linewidth,margin=0ex 2ex 0ex 1ex,nofloat=figure}

\tikzset{every picture/.style={line width=0.75pt}} %

\begin{tikzpicture}[x=0.75pt,y=0.75pt,yscale=-1,xscale=1]
\draw    (20,25) -- (35,25) ;
\draw   (35,15) -- (55,15) -- (55,35) -- (35,35) -- cycle ;
\draw    (55,25) -- (70,25) ;
\draw    (115,25) -- (130,25) ;
\draw   (130,15) -- (150,15) -- (150,35) -- (130,35) -- cycle ;
\draw    (150,25) -- (165,25) ;
\draw    (220,25) -- (230,25) ;
\draw   (230,15) -- (250,15) -- (250,35) -- (230,35) -- cycle ;
\draw    (250,25) -- (260,25) ;
\draw   (260,15) -- (280,15) -- (280,35) -- (260,35) -- cycle ;
\draw    (280,25) -- (290,25) ;

\draw (45,25) node    {$f$};
\draw (18,25) node [anchor=east] [inner sep=0.75pt]  [font=\huge]  {$($};
\draw (72,25) node [anchor=west] [inner sep=0.75pt]  [font=\huge]  {$)$};
\draw (93,24.5) node    {$\comp$};
\draw (141,24.5) node    {$g$};
\draw (113,25) node [anchor=east] [inner sep=0.75pt]  [font=\huge]  {$($};
\draw (167,25) node [anchor=west] [inner sep=0.75pt]  [font=\huge]  {$)$};
\draw (192.5,25.5) node    {$=$};%
\draw (240,25) node    {$f$};
\draw (271,24.5) node    {$g$};
\draw (218,25) node [anchor=east] [inner sep=0.75pt]  [font=\huge]  {$($};
\draw (292,25) node [anchor=west] [inner sep=0.75pt]  [font=\huge]  {$)$};

\end{tikzpicture} \end{adjustbox}
\begin{adjustbox}{center=\linewidth,margin=0ex 2ex 0ex 1ex,nofloat=figure}
\tikzset{every picture/.style={line width=0.75pt}} %

\begin{tikzpicture}[x=0.75pt,y=0.75pt,yscale=-1,xscale=1]
\draw    (20,25) -- (35,25) ;
\draw   (35,15) -- (55,15) -- (55,35) -- (35,35) -- cycle ;
\draw    (55,25) -- (70,25) ;
\draw    (115,25) -- (130,25) ;
\draw   (130,15) -- (150,15) -- (150,35) -- (130,35) -- cycle ;
\draw    (150,25) -- (165,25) ;
\draw    (220,15) -- (230,15) ;
\draw   (230,5) -- (250,5) -- (250,25) -- (230,25) -- cycle ;
\draw    (250,15) -- (260,15) ;
\draw   (230,30) -- (250,30) -- (250,50) -- (230,50) -- cycle ;
\draw    (250,40) -- (260,40) ;
\draw    (220,40) -- (230,40) ;

\draw (45,25) node    {$f$};
\draw (18,25) node [anchor=east] [inner sep=0.75pt]  [font=\huge]  {$($};
\draw (72,25) node [anchor=west] [inner sep=0.75pt]  [font=\huge]  {$)$};
\draw (93,24.5) node    {$\otimes $};
\draw (141,24.5) node    {$g$};
\draw (113,25) node [anchor=east] [inner sep=0.75pt]  [font=\huge]  {$($};
\draw (167,25) node [anchor=west] [inner sep=0.75pt]  [font=\huge]  {$)$};
\draw (192.5,25.5) node    {$=$};
\draw (240,15) node    {$f$};
\draw (218,25) node [anchor=east] [inner sep=0.75pt]  [font=\Huge]  {$($};
\draw (262,25) node [anchor=west] [inner sep=0.75pt]  [font=\Huge]  {$)$};
\draw (240,40) node    {$g$};

\end{tikzpicture}
 \end{adjustbox}

\paragraph{Joining resources.} In a theory of processes, resources can be joined. Given a resource type $A$ and a resource type $B$, we can construct the \emph{joint resource type} $A \tensor B$, which puts together resources of type $A$ and type $B$. Resource joining may be implemented in diverse ways, depending on the theory of processes. However, it must satisfy some basic axioms:
\begin{itemize}[label=$\bullet$]
    \item joining three process resource types together can be done in two ways; these should coincide, 
    \begin{equation}
        A \tensor (B \tensor C) = (A \tensor B) \tensor C,
    \end{equation}
    \begin{adjustbox}{       
        center=\linewidth,
        margin=0ex 3ex 0ex 1ex,
        nofloat=figure}
\tikzset{every picture/.style={line width=0.85pt}} %
\begin{tikzpicture}[x=0.75pt,y=0.75pt,yscale=-1,xscale=1]
\draw    (25,17) -- (85,17) ;
\draw    (25,37) -- (85,37) ;
\draw    (25,47) -- (85,47) ;
\draw    (110,17) -- (170,17) ;
\draw    (110,27) -- (170,27) ;
\draw    (110,47) -- (170,47) ;
\draw (86,25.4) node [anchor=north west][inner sep=0.75pt]    {$=$};
\draw (23,17) node [anchor=east] [inner sep=0.75pt]  [font=\scriptsize]  {$A$};
\draw (23,37) node [anchor=east] [inner sep=0.75pt]  [font=\scriptsize]  {$B$};
\draw (23,47) node [anchor=east] [inner sep=0.75pt]  [font=\scriptsize]  {$C$};
\draw (172,17) node [anchor=west] [inner sep=0.75pt]  [font=\scriptsize]  {$A$};
\draw (172,27) node [anchor=west] [inner sep=0.75pt]  [font=\scriptsize]  {$B$};
\draw (172,47) node [anchor=west] [inner sep=0.75pt]  [font=\scriptsize]  {$C$};
\end{tikzpicture}
     \end{adjustbox}
    \item there must exist a resource type representing the absence of resources, which we call the \emph{unit resource type} $I$; it must be neutral with respect to process joining
    \begin{equation}
        A \tensor I = A = A \tensor I.
    \end{equation}
    \begin{adjustbox}{center=\linewidth,margin=0ex 2ex 0ex 2ex,nofloat=figure}
\tikzset{every picture/.style={line width=0.85pt}} %

\begin{tikzpicture}[x=0.75pt,y=0.75pt,yscale=-1,xscale=1]
\draw    (25,27) -- (85,27) ;
\draw    (110,22) -- (170,22) ;
\draw    (195,17) -- (255,17) ;
\draw [color={rgb, 255:red, 0; green, 0; blue, 80 }  ,draw opacity=1 ] [dash pattern={on 4.5pt off 4.5pt}]  (195,27) -- (255,27) ;
\draw [color={rgb, 255:red, 0; green, 0; blue, 80 }  ,draw opacity=1 ] [dash pattern={on 4.5pt off 4.5pt}]  (25,17) -- (85,17) ;
\draw (23,27) node [anchor=east] [inner sep=0.75pt]  [font=\scriptsize]  {$A$};
\draw (23,17) node [anchor=east] [inner sep=0.75pt]  [font=\scriptsize,color={rgb, 255:red, 0; green, 0; blue, 80 }  ,opacity=1 ]  {$I$};
\draw (86,14.4) node [anchor=north west][inner sep=0.75pt]    {$=$};
\draw (171,15.4) node [anchor=north west][inner sep=0.75pt]    {$=$};
\draw (257,17) node [anchor=west] [inner sep=0.75pt]  [font=\scriptsize]  {$A$};
\draw (257,27) node [anchor=west] [inner sep=0.75pt]  [font=\scriptsize,color={rgb, 255:red, 0; green, 0; blue, 80 }  ,opacity=1 ]  {$I$};

\end{tikzpicture}     \end{adjustbox}
\end{itemize}

\paragraph{Sequential composition.} In a theory of processes, we can compose processes in two different ways. The first is sequential composition: given two processes such that the output type of the first coincides with the input type of the second, say $f \colon A \to B$ and $g \colon B \to C$, their \emph{sequential composition} is the process $(f \comp g) \colon A \to C$ that results from executing $f$ and using its output to execute $g$.

Composing may mean different things in different process theories, but it must always satisfy the following axioms:
\begin{itemize}[label=$\bullet$]
    \item sequencing together three processes $f \colon A \to B$, $g \colon B \to C$ and $h \colon C \to D$ can be done in two different ways, these should coincide,
    \begin{equation}
        (f \comp g) \comp h = f \comp (g \comp h);
    \end{equation}
    \begin{adjustbox}{center=\linewidth,margin=0ex 2ex 0ex 2ex,nofloat=figure}
\tikzset{every picture/.style={line width=0.85pt}} %

\begin{tikzpicture}[x=0.75pt,y=0.75pt,yscale=-1,xscale=1]
\draw    (30,30) -- (40,30) ;
\draw   (40,20) -- (60,20) -- (60,40) -- (40,40) -- cycle ;
\draw    (60,30) -- (70,30) ;
\draw   (70,20) -- (90,20) -- (90,40) -- (70,40) -- cycle ;
\draw    (90,30) -- (110,30) ;
\draw   (110,20) -- (130,20) -- (130,40) -- (110,40) -- cycle ;
\draw    (130,30) -- (140,30) ;
\draw    (165,30) -- (175,30) ;
\draw   (175,20) -- (195,20) -- (195,40) -- (175,40) -- cycle ;
\draw    (235,30) -- (245,30) ;
\draw   (215,20) -- (235,20) -- (235,40) -- (215,40) -- cycle ;
\draw    (195,30) -- (215,30) ;
\draw   (245,20) -- (265,20) -- (265,40) -- (245,40) -- cycle ;
\draw    (265,30) -- (275,30) ;

\draw (50,30) node    {$f$};
\draw (80,30) node    {$g$};
\draw (120,30) node    {$h$};
\draw (185,30) node    {$f$};
\draw (225,30) node    {$g$};
\draw (255,30) node    {$h$};
\draw (141,23.4) node [anchor=north west][inner sep=0.75pt]    {$=$};

\end{tikzpicture}         \end{adjustbox}
    \item there must exist a process representing ``doing nothing'' with a resource $A$ that we write as $\im_A$ -- the \emph{identity} transformation -- which must be neutral with respect to sequential composition,
    \begin{equation}
        \im_A \comp f = f = f \comp \im_B.
    \end{equation}
    \begin{adjustbox}{center=\linewidth,margin=0ex 2ex 0ex 2ex,nofloat=figure}

\tikzset{every picture/.style={line width=0.85pt}} %

\begin{tikzpicture}[x=0.75pt,y=0.75pt,yscale=-1,xscale=1]
\draw    (245,53) -- (255,53) ;
\draw   (255,43) -- (275,43) -- (275,63) -- (255,63) -- cycle ;
\draw    (275,53) -- (305,53) ;
\draw    (150,53) -- (160,53) ;
\draw    (90,53) -- (100,53) ;
\draw   (70,43) -- (90,43) -- (90,63) -- (70,63) -- cycle ;
\draw    (40,53) -- (70,53) ;
\draw   (160,43) -- (180,43) -- (180,63) -- (160,63) -- cycle ;
\draw    (180,53) -- (190,53) ;

\draw (265,53) node    {$f$};
\draw (116,46.4) node [anchor=north west][inner sep=0.75pt]    {$=$};
\draw (80,53) node    {$f$};
\draw (206,46.4) node [anchor=north west][inner sep=0.75pt]    {$=$};
\draw (170,53) node    {$f$};
\draw (38,53) node [anchor=east] [inner sep=0.75pt]  [font=\scriptsize]  {$A$};
\draw (307,53) node [anchor=west] [inner sep=0.75pt]  [font=\scriptsize]  {$B$};
\draw (102,53) node [anchor=west] [inner sep=0.75pt]  [font=\scriptsize]  {$B$};
\draw (148,53) node [anchor=east] [inner sep=0.75pt]  [font=\scriptsize]  {$A$};
\draw (192,53) node [anchor=west] [inner sep=0.75pt]  [font=\scriptsize]  {$B$};
\draw (243,53) node [anchor=east] [inner sep=0.75pt]  [font=\scriptsize]  {$A$};

\end{tikzpicture}     \end{adjustbox}
\end{itemize}
We say that a process $f \colon A \to B$ is \emph{reversible} if it has a reverse counterpart, $\inverse{f} \colon B \to A$, such that executing one after the other is the same as having done nothing, $f \comp \inverse{f} = \im_A$ and $\inverse{f} \comp f = \im_B$. This is usually called an \emph{isomorphism}. In this situation, we say that $A$ and $B$ are \emph{isomorphic}, and we write that as $A \cong B$.

\paragraph{Parallel composition.} The second way of composing two processes is to do so in parallel. Given any two processes $f \colon A \to B$ and $f' \colon A' \to B'$, their parallel composition is a process $(f \tensor f') \colon A \tensor A' \to B \tensor B'$ that results from jointly executing both processes over the joint input resource type, so as to produce the joint output resource type.

The implementation of parallel composition will usually be related to the implementation of resource joining in the same theory. It must satisfy the following axioms:
\begin{itemize}[label=$\bullet$]
    \item composing three processes in parallel can be done in two ways; these should coincide,
    \begin{equation}
        f \tensor (g \tensor h) = (f \tensor g) \tensor h;
    \end{equation}
    \begin{adjustbox}{center=\linewidth,margin=0ex 2ex 0ex 1ex,nofloat=figure}
\tikzset{every picture/.style={line width=0.85pt}} %

\begin{tikzpicture}[x=0.75pt,y=0.75pt,yscale=-1,xscale=1]
\draw    (15,20) -- (30,20) ;
\draw   (30,10) -- (50,10) -- (50,30) -- (30,30) -- cycle ;
\draw    (50,20) -- (65,20) ;
\draw    (15,45) -- (30,45) ;
\draw   (30,35) -- (50,35) -- (50,55) -- (30,55) -- cycle ;
\draw    (50,45) -- (65,45) ;
\draw    (15,80) -- (30,80) ;
\draw   (30,70) -- (50,70) -- (50,90) -- (30,90) -- cycle ;
\draw    (50,80) -- (65,80) ;
\draw    (90,20) -- (105,20) ;
\draw   (105,10) -- (125,10) -- (125,30) -- (105,30) -- cycle ;
\draw    (125,20) -- (140,20) ;
\draw    (90,55) -- (105,55) ;
\draw   (105,45) -- (125,45) -- (125,65) -- (105,65) -- cycle ;
\draw    (125,55) -- (140,55) ;
\draw    (90,80) -- (105,80) ;
\draw   (105,70) -- (125,70) -- (125,90) -- (105,90) -- cycle ;
\draw    (125,80) -- (140,80) ;

\draw (40,20) node    {$f$};
\draw (40,45) node    {$g$};
\draw (40,80) node    {$h$};
\draw (115,20) node    {$f$};
\draw (115,55) node    {$g$};
\draw (115,80) node    {$h$};
\draw (66,42.4) node [anchor=north west][inner sep=0.75pt]    {$=$};

\end{tikzpicture}
     \end{adjustbox}
    \item doing nothing with no resources should be the unit for parallel composition; the identity transformation on the unit resource type $I$ must satisfy
    \begin{equation}
        f \tensor \im_I = f = \im_I \tensor f;
    \end{equation}
    \begin{adjustbox}{center=\linewidth,margin=0ex 2ex 0ex 1ex,nofloat=figure}
\tikzset{every picture/.style={line width=0.85pt}} %

\begin{tikzpicture}[x=0.75pt,y=0.75pt,yscale=-1,xscale=1]
\draw    (51,33) -- (61,33) ;
\draw   (61,23) -- (81,23) -- (81,43) -- (61,43) -- cycle ;
\draw    (81,33) -- (91,33) ;
\draw [color={rgb, 255:red, 0; green, 0; blue, 80 }  ,draw opacity=1 ] [dash pattern={on 4.5pt off 4.5pt}]  (50,53) -- (90,53) ;
\draw    (144,38) -- (154,38) ;
\draw   (154,28) -- (174,28) -- (174,48) -- (154,48) -- cycle ;
\draw    (174,38) -- (184,38) ;
\draw    (240,53) -- (250,53) ;
\draw   (250,43) -- (270,43) -- (270,63) -- (250,63) -- cycle ;
\draw    (270,53) -- (280,53) ;
\draw [color={rgb, 255:red, 0; green, 0; blue, 80 }  ,draw opacity=1 ] [dash pattern={on 4.5pt off 4.5pt}]  (240,33) -- (280,33) ;

\draw (71,33) node    {$f$};
\draw (49,33) node [anchor=east] [inner sep=0.75pt]  [font=\scriptsize]  {$A$};
\draw (93,33) node [anchor=west] [inner sep=0.75pt]  [font=\scriptsize]  {$B$};
\draw (48,53) node [anchor=east] [inner sep=0.75pt]  [font=\scriptsize,color={rgb, 255:red, 0; green, 0; blue, 80 }  ,opacity=1 ]  {$I$};
\draw (92,53) node [anchor=west] [inner sep=0.75pt]  [font=\scriptsize,color={rgb, 255:red, 0; green, 0; blue, 80 }  ,opacity=1 ]  {$I$};
\draw (107,31.4) node [anchor=north west][inner sep=0.75pt]    {$=$};
\draw (164,38) node    {$f$};
\draw (142,38) node [anchor=east] [inner sep=0.75pt]  [font=\scriptsize]  {$A$};
\draw (186,38) node [anchor=west] [inner sep=0.75pt]  [font=\scriptsize]  {$B$};
\draw (201,31.4) node [anchor=north west][inner sep=0.75pt]    {$=$};
\draw (260,53) node    {$f$};
\draw (238,53) node [anchor=east] [inner sep=0.75pt]  [font=\scriptsize]  {$A$};
\draw (282,53) node [anchor=west] [inner sep=0.75pt]  [font=\scriptsize]  {$B$};
\draw (238,33) node [anchor=east] [inner sep=0.75pt]  [font=\scriptsize,color={rgb, 255:red, 0; green, 0; blue, 80 }  ,opacity=1 ]  {$I$};
\draw (282,33) node [anchor=west] [inner sep=0.75pt]  [font=\scriptsize,color={rgb, 255:red, 0; green, 0; blue, 80 }  ,opacity=1 ]  {$I$};

\end{tikzpicture}
     \end{adjustbox}
    \item executing two processes in parallel and then other two processes in parallel must yield the same result as executing in parallel the sequential compositions of both pairs,
    \begin{equation}
        (f \tensor g) \comp (h \tensor k) = (f \comp h) \tensor (g \comp k).
    \end{equation}
    \begin{adjustbox}{center=\linewidth,margin=0ex 2ex 0ex 1ex,nofloat=figure}
\tikzset{every picture/.style={line width=0.75pt}} %

\begin{tikzpicture}[x=0.75pt,y=0.75pt,yscale=-1,xscale=1]
\draw    (260,47) -- (275,47) ;
\draw   (275,37) -- (295,37) -- (295,57) -- (275,57) -- cycle ;
\draw    (295,47) -- (310,47) ;
\draw    (260,77) -- (275,77) ;
\draw   (275,67) -- (295,67) -- (295,87) -- (275,87) -- cycle ;
\draw    (295,77) -- (310,77) ;
\draw [color={rgb, 255:red, 0; green, 0; blue, 80 }  ,draw opacity=1 ] [dash pattern={on 0.84pt off 2.51pt}]  (305,37) -- (305,97) ;
\draw    (300,47) -- (315,47) ;
\draw   (315,37) -- (335,37) -- (335,57) -- (315,57) -- cycle ;
\draw    (335,47) -- (350,47) ;
\draw    (300,77) -- (315,77) ;
\draw   (315,67) -- (335,67) -- (335,87) -- (315,87) -- cycle ;
\draw    (335,77) -- (350,77) ;
\draw    (375,47) -- (390,47) ;
\draw   (390,37) -- (410,37) -- (410,57) -- (390,57) -- cycle ;
\draw    (375,77) -- (390,77) ;
\draw   (390,67) -- (410,67) -- (410,87) -- (390,87) -- cycle ;
\draw    (410,77) -- (430,77) ;
\draw    (410,47) -- (430,47) ;
\draw   (430,37) -- (450,37) -- (450,57) -- (430,57) -- cycle ;
\draw    (450,47) -- (465,47) ;
\draw   (430,67) -- (450,67) -- (450,87) -- (430,87) -- cycle ;
\draw    (450,77) -- (465,77) ;
\draw [color={rgb, 255:red, 0; green, 0; blue, 80 }  ,draw opacity=1 ] [dash pattern={on 0.84pt off 2.51pt}]  (375,62) -- (465,62) ;

\draw (285,47) node    {$f$};
\draw (285,77) node    {$g$};
\draw (325,47) node    {$h$};
\draw (325,77) node    {$k$};
\draw (400,47) node    {$f$};
\draw (400,77) node    {$g$};
\draw (440,47) node    {$h$};
\draw (440,77) node    {$k$};
\draw (356,59.4) node [anchor=north west][inner sep=0.75pt]    {$=$};

\end{tikzpicture}
     \end{adjustbox}
\end{itemize}

\paragraph{Swapping.} Finally, we want to be able to route resources to each specific process. Any theory of processes, given any two resource types $A$ and $B$, must contain a process $\sigma_{A,B} \colon A \tensor B \to B \tensor A$. This process is called the \emph{swap}, which only permutes the order in which resources are organized. It must satisfy the following axioms.
\begin{itemize}[label=$\bullet$]
    \item Swapping twice is the same as swapping once with a joint type,
    \begin{align}
      \sigma_{A, B \tensor C} &= (\sigma_{A,B} \comp \im_C) \comp (\im_B \tensor \sigma_{A,C}); \\
      \sigma_{A \tensor B,C} &=
      (\im_A \comp \sigma_{B,C}) \comp (\sigma_{A,C} \tensor \im_B).
    \end{align}
    \begin{adjustbox}{center=\linewidth,margin=0ex 2ex 0ex 1ex,nofloat=figure}
\tikzset{every picture/.style={line width=0.85pt}} %

\begin{tikzpicture}[x=0.75pt,y=0.75pt,yscale=-1,xscale=1]
\draw    (25,10) .. controls (41,10.4) and (40,39.8) .. (55,40) ;
\draw    (10,10) -- (25,10) ;
\draw    (55,40) -- (70,40) ;
\draw    (10,30) -- (25,30) ;
\draw    (10,40) -- (25,40) ;
\draw    (25,30) .. controls (41,30.4) and (40,9.8) .. (55,10) ;
\draw    (25,40) .. controls (41,40.4) and (40,19.8) .. (55,20) ;
\draw    (55,20) -- (70,20) ;
\draw    (55,10) -- (70,10) ;
\draw    (95,10) -- (105,10) ;
\draw    (95,25) -- (105,25) ;
\draw    (95,40) -- (120,40) ;
\draw    (105,10) .. controls (115.6,10.2) and (109.6,25.4) .. (120,25) ;
\draw    (105,25) .. controls (115.6,25.2) and (109.6,10.4) .. (120,10) ;
\draw    (120,25) .. controls (130.6,25.2) and (124.6,40.4) .. (135,40) ;
\draw    (120,40) .. controls (130.6,40.2) and (124.6,25.4) .. (135,25) ;
\draw    (120,10) -- (145,10) ;
\draw    (135,25) -- (145,25) ;
\draw    (135,40) -- (145,40) ;

\draw (76,22.4) node [anchor=north west][inner sep=0.75pt]    {$=$};

\end{tikzpicture}
     \end{adjustbox}
    \begin{adjustbox}{center=\linewidth,margin=0ex 2ex 0ex 1ex,nofloat=figure}
\tikzset{every picture/.style={line width=0.85pt}} %

\begin{tikzpicture}[x=0.75pt,y=0.75pt,yscale=-1,xscale=1]
\draw    (25,40) .. controls (41,39.6) and (40,10.21) .. (55,10.01) ;
\draw    (10,40) -- (25,40) ;
\draw    (55,10.01) -- (70,10.01) ;
\draw    (10,20.01) -- (25,20) ;
\draw    (10,10.01) -- (25,10.01) ;
\draw    (25,20) .. controls (41,19.6) and (40,40.2) .. (55,40) ;
\draw    (25,10.01) .. controls (41,9.61) and (40,30.2) .. (55,30) ;
\draw    (55,30) -- (70,30) ;
\draw    (55,40) -- (70,40) ;
\draw    (95,40) -- (105,40) ;
\draw    (95,25) -- (105,25) ;
\draw    (95,10.01) -- (120,10.01) ;
\draw    (105,40) .. controls (115.6,39.8) and (109.6,24.6) .. (120,25) ;
\draw    (105,25) .. controls (115.6,24.8) and (109.6,39.6) .. (120,40) ;
\draw    (120,25) .. controls (130.6,24.8) and (124.6,9.61) .. (135,10.01) ;
\draw    (120,10.01) .. controls (130.6,9.81) and (124.6,24.6) .. (135,25) ;
\draw    (120,40) -- (145,40) ;
\draw    (135,25) -- (145,25) ;
\draw    (135,10.01) -- (145,10.01) ;

\draw (90,30) node [anchor=north east][inner sep=0.75pt]  [rotate=-180,xscale=-1]  {$=$};

\end{tikzpicture}
\     \end{adjustbox}
    \item Swapping two process inputs is the same as swapping the executing place and swapping the output.
    \begin{equation}
        (f \tensor g) \comp \sigma_{B,B'} =
        \sigma_{A,A'} \comp (g \tensor f).
    \end{equation}
    \begin{adjustbox}{center=\linewidth,margin=0ex 2ex 0ex 1ex,nofloat=figure}

\tikzset{every picture/.style={line width=0.85pt}} %

\begin{tikzpicture}[x=0.75pt,y=0.75pt,yscale=-1,xscale=1]
\draw    (10,20) -- (20,20) ;
\draw   (20,10) -- (40,10) -- (40,30) -- (20,30) -- cycle ;
\draw    (40,20) -- (45,20) ;
\draw    (10,45) -- (20,45) ;
\draw   (20,35) -- (40,35) -- (40,55) -- (20,55) -- cycle ;
\draw    (45,20) .. controls (65.4,20.8) and (54.6,44.4) .. (75,45) ;
\draw    (45,45) .. controls (65.4,45.8) and (54.6,19.4) .. (75,20) ;
\draw    (40,45) -- (45,45) ;
\draw    (75,20) -- (80,20) ;
\draw    (75,45) -- (80,45) ;
\draw    (165,20) -- (175,20) ;
\draw   (145,10) -- (165,10) -- (165,30) -- (145,30) -- cycle ;
\draw    (105,20) -- (110,20) ;
\draw    (165,45) -- (175,45) ;
\draw   (145,35) -- (165,35) -- (165,55) -- (145,55) -- cycle ;
\draw    (110,20) .. controls (130.4,20.8) and (119.6,44.4) .. (140,45) ;
\draw    (110,45) .. controls (130.4,45.8) and (119.6,19.4) .. (140,20) ;
\draw    (105,45) -- (110,45) ;
\draw    (140,20) -- (145,20) ;
\draw    (140,45) -- (145,45) ;

\draw (30,20) node    {$f$};
\draw (30,45) node    {$g$};
\draw (155,20) node    {$f$};
\draw (155,45) node    {$g$};
\draw (86,27.4) node [anchor=north west][inner sep=0.75pt]    {$=$};

\end{tikzpicture}
     \end{adjustbox}
    \item Swapping and swapping again is the same as doing nothing.
    \begin{equation}
        \sigma_{A,B} \comp \sigma_{B,A} = \im_{A \tensor B}.
    \end{equation}
    \begin{adjustbox}{center=\linewidth,margin=0ex 2ex 0ex 1ex,nofloat=figure}

\tikzset{every picture/.style={line width=0.85pt}} %

\begin{tikzpicture}[x=0.75pt,y=0.75pt,yscale=-1,xscale=1]
\draw    (10,15.01) .. controls (29.2,14.83) and (19.6,29.64) .. (40,30) ;
\draw    (10,30) .. controls (30.4,30.48) and (19.6,14.65) .. (40,15.01) ;
\draw    (40,15.03) .. controls (60.4,15.5) and (49.6,29.66) .. (70,30.02) ;
\draw    (40,30.01) .. controls (60.4,30.49) and (49.6,14.67) .. (70,15.03) ;
\draw    (95,15.03) -- (135,15.03) ;
\draw    (95,30.03) -- (135,30.03) ;

\draw (76,17.43) node [anchor=north west][inner sep=0.75pt]    {$=$};

\end{tikzpicture}
     \end{adjustbox}
\end{itemize}

\paragraph{Symmetric monoidal categories.} The algebraic structures that capture this notion of process theory are ``symmetric monoidal categories'' \cite{maclane78}. The resource types are usually called \emph{objects}, while the processes are usually called \emph{morphisms}. Reversible processes are called \emph{isomorphisms}.

\begin{definition}
    A symmetric monoidal category \cite{maclane78} is a tuple
    \[\catC = (\catC_\mathrm{obj}, \catC_\mathrm{mor}, (\comp), \im, (\tensor)_\mathrm{obj}, (\tensor)_\mathrm{mor}, I, \sigma),\]
    specifying a set of objects, or resource types, $\catC_\mathrm{obj}$; a set of morphisms, or processes, $\catC_\mathrm{mor}$; a composition operation; a family of identity morphisms; a tensor operation on objects and morphisms; a unit object and a family of swapping morphisms; satisfying all of the axioms of this section (1-11), possibly up to reversible coherence isomorphisms of the form,
    \[\begin{aligned}
        \alpha_{A,B,C} \colon & (A \tensor B) \tensor C \to A \tensor (B \tensor C), \\
        \lambda_A \colon & I \tensor A \to A, \mbox{ and } \\
        \rho_A \colon & A \tensor I \to A.
    \end{aligned}\]
    Coherence isomorphisms must commute with all suitably typed processes and must satisfy all possible formal equations between them.
    We usually denote by $\catC(A,B)$ the set of morphisms from $A$ to $B$.
\end{definition}

Note that we do allow the axioms to be satisfied \emph{up to a reversible coherence isomorphism}.
For an example, consider the theory of pure functions between sets joined by the cartesian product. It is not true that, given three sets $A$, $B$ and $C$, the following two sets are \emph{equal}, $A \times (B \times C) \cong (A \times B) \times C$; they are merely in a one-to-one correspondence.
A symmetric monoidal category is \emph{strict} only if these reversible transformations are identities.
It was proven by MacLane (his Coherence Theorem, \Cref{theorem:coherence} \cite{maclane78}) that the axioms (1-11) are valid for both strict and non-strict monoidal categories.

\begin{example}
The paradigmatic theory of processes uses mathematical sets as types and functions as processes. We can check that the following functions, with the cartesian product, satisfy the axioms (1-11), thus forming a symmetric monoidal category. 
\[\begin{aligned}
    \mathbf{Set} = (\mathsf{Sets}, & \mathsf{Functions}, (\circ), \im, \times, \langle \bullet, \bullet \rangle, 1, (a,(b,c)) \mapsto ((a,b),c), \\ &  (a,\ast) \mapsto a, (\ast, a) \mapsto a , (a,b) \mapsto (b,a)).
\end{aligned}\]
\end{example}

\begin{example}\label{definition:matcategory}
The theory of linear transformations uses dimensions (natural numbers) as types and matrices over the real numbers as processes. We can check that matrices, with the direct sum, satisfy the axioms (1-11), thus forming a symmetric monoidal category.
\[\begin{aligned}
    \mathbf{Mat} = (\mathbb{N}, & \mathsf{Matrices}, (\cdot), (+), \oplus, 0, \mathbf{I}, \mathbf{I}, \mathbf{I}, \mathbf{I}, \mathbf{S}),
\end{aligned}\]
where $\mathbf{I}$ is the identity matrix and $\mathbf{S}$ is the permutation matrix,
\[ \mathbf{I}_n = \begin{psmallmatrix} 1 & \dots & 0 \\ \vdots & \ddots n & \vdots \\ 0 & \dots & 1 \end{psmallmatrix} ; \qquad
\mathbf{S}_{n,m} = \begin{psmallmatrix} 0 & \dots & 0 & 1 & \dots & 0 \\ \vdots & \ddots  & \vdots & \vdots & \ddots n & \vdots \\ 0 & \dots & 0 & 0 & \dots & 1 \\ 1 & \dots & 0 & 0 & \dots & 0 \\ \vdots & \ddots m & \vdots & \vdots & \ddots & \vdots \\ 0 & \dots & 1 & 0 & \dots & 0 \end{psmallmatrix}.\]
\end{example}

\begin{example}
\label{ex:finitesetplus}
It can happen that two theories of processes share the same elements, but differ on how they are combined. The theory of choice in finite sets uses again functions, but instead of the cartesian product, it uses the disjoint union. We can check that the following functions satisfy again the axioms (1-11).
\[\begin{aligned}
    \mathbf{FinSet} = (\mathsf{FinSets}, & \mathsf{Functions}, (\circ), \im, (+), [ \bullet, \bullet ], 0, (a|(b|c)) \mapsto ((a|b)|c), \\ &  (a|\emptyset) \mapsto a, (\emptyset | a) \mapsto a , (a|b) \mapsto (b|a)).
\end{aligned}\]
\end{example}

When designing software, the advantage of an algebraic structure such as monoidal categories is reusability: we can encapsulate the operations of our theory of processes into a separate module, and we can abstractly work with them without knowing the particulars of the theory of processes at hand. The axioms (1-11) are straightforward to check for most theories of processes -- even if we will not take the time to do so in this text -- but they are a powerful abstraction: once the axioms are satisfied, we can start reasoning with string diagrams.

\subsection{Monoidal Equivalence}

In this final preliminary section, we recall what it means to have a transformation between monoidal categories (symmetric strong monoidal functor, \Cref{def:symmetricstrongmonoidal}), what it means to have two equivalent \monoidalCategories{} (\monoidalEquivalence{}, \Cref{def:monoidalequivalence}) and the statement of the Coherence Theorem: every \monoidalCategory{} is equivalent to a strict one (\Cref{theorem:coherence}).

\paragraph{Monoidal functors.}
Every time we consider an algebraic structure, it is natural to also consider what is a good notion of transformation between two such algebraic structures. A transformation of algebraic structures should preserve the key ingredients of the algebraic construction. In the case of \symmetricMonoidalCategories{}, these transformations are called \emph{monoidal functors}, and they preserve the operation of composition.

\begin{definition}
    \label{def:symmetricstrongmonoidal}
    A \defining{linkmonoidalfunctor}{symmetric strong monoidal functor} between two \symmetricMonoidalCategories{} with coherence isomorphisms
    \[\begin{aligned}
        \catC &= (\catC_\mathrm{obj}, \catC_\mathrm{mor}, (\comp), \im, (\tensor)_\mathrm{obj}, (\tensor)_\mathrm{mor}, I, \alpha^\catC, \lambda^\catC, \rho^\catC, \sigma^\catC), \mbox{ and } \\
        \catD &= (\catD_\mathrm{obj}, \catD_\mathrm{mor}, (\comp), \im, (\tensor)_\mathrm{obj}, (\tensor)_\mathrm{mor}, I, \alpha^\catD, \lambda^\catD, \rho^\catD, \sigma^\catD)
    \end{aligned}\]
    is a tuple $\mathbf{F} = (F_\mathrm{obj}, F_\mathrm{mor},\phi,\varphi)$, consisting of 
    \begin{itemize}[label=$\bullet$]
        \item a function that assigns objects of the first category to objects of the second category, $F_\mathrm{obj} \colon \catC_\mathrm{obj} \to \catD_\mathrm{obj}$, 
        \item and a function that assigns morphisms of the first category to morphisms of the second category, $F_\mathrm{mor} \colon \catC_\mathrm{mor} \to \catD_\mathrm{mor}$.
        \item a coherence isomorphism $\phi_{A,B} \colon FA \tensor FB \to F(A \tensor B)$,
        \item and a coherence isomorphism $\varphi \colon J \to FI$.
    \end{itemize}
    Traditionally, functions both on objects, $F_\mathrm{obj}$ and morphisms, $F_\mathrm{mor}$ are denoted by $F$.
    The functor must be such that every morphism $f \colon A \to B$ is assigned a morphism $F(f) \colon FA \to FB$, whose source and target are the images of the original source and target. Moreover, it must satisfy the following axioms,
    \begin{itemize}[label=$\bullet$]
        \item compositions must be preserved, $F(f \comp g) = F(f) \comp F(g)$,
        \item identities must be preserved, $F(\im_A) = \im_{FA}$,
        \item tensoring must be transported by the natural transformations, meaning that
        \[F(f \tensor g) = \mu \comp (F(f) \tensor F(g)) \comp \inverse{\mu},\]
        \item associators, unitors and swaps must be transported by the natural transformations, meaning that
        \[\begin{aligned}
            F(\alpha^\catC) &= \inverse{\mu} \comp (\inverse{\mu} \tensor \im) \comp \alpha^\catD \comp (\im \tensor \mu) \comp \mu, \\
            F(\lambda^\catC) &= \inverse{\mu} \comp (\inverse{\varphi} \tensor \im) \comp \lambda^\catD, \\
            F(\rho^\catC) &= \inverse{\mu} \comp (\im \tensor \inverse{\varphi}) \comp \rho^\catD, \\
            F(\sigma^\catC) &= \inverse{\mu} \comp \sigma^\catD \comp \mu.
        \end{aligned}\]
    \end{itemize}
\end{definition}

\begin{example}
    \label[]{ex:finsetToMat}
    For instance, there is a strong monoidal functor translating from the theory of choice in finite sets, $\mathbf{FinSet}_{+}$ (\Cref{ex:finitesetplus}), to the theory of linear transformations $\mathbf{Mat}$ (\Cref{definition:matcategory}) that sends the finite sets $A = \{a_0,\dots,a_{n-1}\}$ and $B = \{b_0,\dots,b_{m-1}\}$ to their cardinalities, $n$ and $m$; and each function $f \colon A \to B$ to the matrix $F_{ij} \colon n \to m$ that contains a $1$ on the entry $F_{ij}$ when $f(a_i) = b_j$, and contains a $0$ otherwise.
\end{example}

\begin{definition}
    \label{def:monoidalequivalence}
    \defining{linkisomorphismofcategories}{}
    A \defining{linkmonoidalequivalence}{monoidal equivalence} of categories is a symmetric strong \monoidalFunctor{} $F \colon \catC \to \catD$ that is
    \begin{enumerate}
        \item \emph{essentially surjective} on objects, meaning that for each $X \in \catD_\mathrm{obj}$, there exists $A \in \catC_\mathrm{obj}$ such that $F(A) \cong X$;
        \item \emph{essentially injective} on objects, meaning that $F(A) \cong F(B)$ implies $A \cong B$; it can be proven that every monoidal functor is essentially injective, so this condition, though conceptually important, is superfluous; 
        \item \emph{surjective on morphisms}, or \emph{full}, meaning that for each $g \colon FA \to FB$ there exists some $f \colon A \to B$ such that $F(f) = g$;
        \item \emph{injective on morphisms}, or \emph{faithful}, meaning that given any two morphisms $f \colon A \to B$ and $g \colon A \to B$ such that $F(f) = F(g)$, it holds that $f = g$.
    \end{enumerate} 
    In this situation, we say that $\catC$ and $\catD$ are \emph{equivalent}, and we write that as $\catC \cong \catD$. Moreover, when the \monoidalFunctor{} is injective and surjective on objects, we say that $\catC$ and $\catD$ are \emph{isomorphic}.
\end{definition}

\begin{theorem}[Coherence theorem, {{\cite[Theorem 2.1, Chapter VII]{maclane78}}}]%
    \label{theorem:coherence}
    Every monoidal category is monoidally equivalent to a strict monoidal category.
  \end{theorem}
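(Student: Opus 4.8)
The plan is to prove the theorem by the \emph{strictification} construction: from an arbitrary monoidal category $\catC$ I build a strict monoidal category $\mathsf{Str}(\catC)$ together with a monoidal functor $\catC \to \mathsf{Str}(\catC)$, and then verify that this functor is a monoidal equivalence in the sense of the preceding definition, i.e.\ essentially surjective, full and faithful. Since non-strictness can only enter through the associator and the unitors $\alpha, \lambda, \rho$, the entire difficulty is packaged into controlling the canonical coherence isomorphisms built from them.

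Concretely, I would take the objects of $\mathsf{Str}(\catC)$ to be finite sequences $(A_1, \ldots, A_n)$ of objects of $\catC$ (including the empty sequence), with tensor product given by concatenation $\ast$. Concatenation is \emph{strictly} associative and has the empty sequence as a \emph{strict} two-sided unit, so the only remaining task is to furnish hom-sets and a tensor on morphisms compatible with this strict structure. Fix once and for all the left-bracketing evaluation $T$ that sends a sequence to $T(A_1, \ldots, A_n) = (\cdots (A_1 \tensor A_2) \tensor \cdots) \tensor A_n$, with $T$ of the empty sequence equal to $I$, and declare a morphism $(A_1,\ldots,A_n) \to (B_1,\ldots,B_m)$ in $\mathsf{Str}(\catC)$ to be simply a morphism $T(A_1,\ldots,A_n) \to T(B_1,\ldots,B_m)$ of $\catC$. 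Composition and identities are then inherited from $\catC$ on the nose.

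To tensor two morphisms I would use the canonical comparison isomorphism $\gamma_{\vec A, \vec C} \colon T(\vec A) \tensor T(\vec C) \to T(\vec A \ast \vec C)$ assembled out of copies of $\alpha$, $\lambda$ and $\rho$, setting $f \tensor g := \gamma_{\vec B, \vec D} \comp (f \tensor g) \comp \inverse{\gamma_{\vec A, \vec C}}$ (the inner $\tensor$ taken in $\catC$). The embedding $E \colon \catC \to \mathsf{Str}(\catC)$ sends $A$ to the singleton $(A)$ and acts as the identity on hom-sets, since $T(A) = A$. This $E$ is strong monoidal, with structure isomorphisms the relevant identity morphisms of $\catC$, because $T(A) \tensor T(B) = A \tensor B = T(A \tensor B)$; it is fully faithful because $\mathsf{Str}(\catC)((A),(B)) = \catC(T A, T B) = \catC(A,B)$; and it is essentially surjective because every sequence $\vec A$ is isomorphic in $\mathsf{Str}(\catC)$ to the singleton $(T(\vec A)) = E(T(\vec A))$ via the identity of $\catC$. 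By the preceding definition, $E$ then witnesses $\catC \cong \mathsf{Str}(\catC)$ with $\mathsf{Str}(\catC)$ strict.

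The main obstacle, and the genuine content of the theorem, is showing that $\gamma$ is well defined and coherent: that the morphism tensor is functorial, satisfies the interchange law, and that the induced associativity and unit constraints on morphisms are \emph{equalities} all reduce to the single assertion that any two canonical isomorphisms built from $\alpha, \lambda, \rho$ between the same pair of bracketed products coincide. I would establish this by a normalization argument, rewriting every object expression to its left-bracketed normal form through a canonically chosen isomorphism and proving, by induction on the expression and using exactly the pentagon and triangle axioms, that this normal-form isomorphism is independent of the rewriting path. Granting this coherence of canonical isomorphisms, every verification above becomes routine diagram chasing.
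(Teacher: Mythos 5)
The paper offers no proof of this statement---it is quoted directly from MacLane---and your proposal is exactly the standard strictification argument of the cited source (and of Joyal--Street): objects as finite sequences with concatenation as a strictly associative and unital tensor, morphisms defined via a fixed left-bracketed evaluation $T$, tensor on morphisms obtained by conjugating with canonical comparison isomorphisms $\gamma$, and all remaining verifications reduced to the coherence of canonical maps, established by normalization using precisely the pentagon and triangle axioms. Your construction is correct as it stands; the one point to keep explicit in a full write-up is that the coherence lemma must be formulated for \emph{formal} bracketed expressions (as your indexing of $\gamma$ by sequences already does) rather than for underlying objects of $\catC$, since accidental coincidences of objects in $\catC$ can otherwise make the naive statement ``any two canonical isomorphisms between the same objects coincide'' false.
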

  
Let us comment further on how we use the coherence theorem. Each time we have a morphism $f \colon A \to B$ in a monoidal category, we have a corresponding morphism $A \to B$ in its strictification. 
This morphism can be lifted to the original category to uniquely produce, say, a morphism $(\lambda_{A} \comp f \comp \inverse{\lambda_{B}}) \colon I \otimes A \to I \otimes B$. Each time the source and the target are clearly determined, we simply write $f$ again for this new morphism.

The reason to avoid this explicit notation on our definitions and proofs is that it would quickly become verbose and distractive.
Equations seem conceptually easier to understand when written assuming the coherence theorem -- and they become even clearer when drawn as string diagrams, which implicitly hide these bureaucratic isomorphisms.
In fact, in the work of Katis, Sabadini and Walters~\cite{katis02}, strictness is assumed from the start for the sake of readibility, even though---as argued above---it is not a necessary assumption.

\Cref{theorem:coherence} and \Cref{sec:theories-processes} can be summarized by the slogan:
\begin{quote}
  \emph{``Any theory of processes satisfying the axioms of symmetric monoidal categories (1-11) can be reasoned about using string diagrams''}.
\end{quote}

 \newpage

\section{Feedback Categories}
\label{section:preliminaries}
In this section we recall \hyperlink{linkcategorywithfeedback}{feedback categories}, originally introduced in~\cite{katis02}, and contrast them with the stronger notion of \emph{traced monoidal categories} in \Cref{section:trace}.
We discuss the relationship between feedback and delay in \Cref{section:delayfeedback}.
Next, we recall the construction of the \emph{free} feedback category in \Cref{section:freefeedback}, and give examples in \Cref{section:examplesfeedback}.

\subsection{Feedback Categories}\label{section:feedback}

Feedback categories~\cite{katis02} were motivated by examples such as \emph{Elgot automata}~\cite{elgot75}, \emph{iteration theories}~\cite{bloom93} and \emph{continuous dynamical systems}~\cite{katis99}. 
These categories feature a \defining{linkfeedbackoperator}{feedback operator}, $\fbk(\bullet)$, which takes a morphism $\sS \otimes \sA \to \sS \otimes \sB$ and \emph{``feeds back''} one of its outputs to one of its inputs of the same type, yielding a morphism $\sA \to \sB$ (\Cref{fig:feedbacktype}, left). 
When using string diagrams, we depict the action of the \feedbackoperator{} as a loop with a double arrowtip (\Cref{fig:graphicalnotationfeedback}, right): string diagrams must be acyclic, and so the feedback operator cannot be confused with a normal wire.
\begin{figure}[H]
  \centering
    \(\infer{\fbkOn{}{\sS}{\sA}{\sB}(\fm) \colon \sA \to \sB}{f \colon \sS \otimes \sA \to \sS \otimes \sB}\)
    \qquad\qquad
    \feedbacknotation
    \caption{Type and graphical notation for the operator $\fbkOn{}{\sS}{\sA}{\sB}(\bullet)$.}
    \label{fig:graphicalnotationfeedback}
    \label{fig:feedbacktype}
\end{figure}

Capturing a reasonable notion of feedback requires the operator to interact coherently with
the flow imposed by the structure of a symmetric monoidal category. This interaction
is expressed by a few straightforward axioms, which we list below.
\begin{definition}\label{definition:feedback}

  A \defining{linkcategorywithfeedback}{feedback category~\cite{katis02}} is a symmetric monoidal
  category \(\catC\) endowed with an operator
  \(\fbkOn{}{\sS}{A}{B} \colon \catC(\sS \otimes A, \sS \otimes B) \to \catC(A,B)\),
  which satisfies the following axioms (A1-A5, see also \Cref{figure:axioms}).

\addtolength\leftmargini{2em}
\begin{enumerate}[label={\quad(A\arabic*).}, ref={(A\arabic*)}, start=1 ]
  \item\label{axiom:tight} %
\defining{linktightening}{Tightening}. Feedback must be natural in \(\sA, \sB \in \catC\), its input and output.
This is to say that for every morphism \(\fm \colon \sS \otimes \sA \to \sS \otimes \sB\) and every pair of morphisms \(\um \colon A' \to A\) and \(\vm \colon \sB \to B'\),

\[\um \comp \fbkOn{}{\sS}{\sA}{\sB}(\fm) \comp \vm = \fbkOn{}{\sS}{\sA'}{\sB'}((\im \otimes \um) \comp \fm \comp (\im \otimes \vm)).\]

   \item\label{axiom:vanish} %
\defining{linkvanishingaxiom}{Vanishing}. Feedback on the empty tensor product, the unit, does nothing.
That is to say that, for every $\fm \colon A \to B$,
\[\fbkOn{}{\sI}{\sA}{\sB}(\fm) = \fm.\]
   \item\label{axiom:join} %
\defining{linkjoiningaxiom}{Joining}. Feedback on a monoidal pair is the same as two consecutive applications of feedback. That is to say that, for every morphism $\fm \colon \sS \tensor \sT \tensor \sA \to \sS \tensor \sT \tensor \sB$,
\[\fbkOn{}{\sT}{\sA}{\sB}(\fbkOn{}{\sS}{\sS \otimes \sA}{\sS \otimes \sB}(\fm)) = \fbkOn{}{\sS \otimes \sT}{A}{B}(f).\]
   \item\label{axiom:strength} %
\defining{linkstrengthaxiom}{Strength}. Feedback has the same result if it is taken in parallel with another morphism. That is to say that, for every morphism $\fm \colon \sS \tensor \sA \to \sS \tensor \sB$ and every morphism $\gm \colon \sA' \to \sB'$,
\[\fbkOn{}{\sS}{\sA}{\sB}(\fm) \otimes \gm = \fbkOn{}{\sS}{\sA \otimes \sA'}{\sB \otimes \sB'}(\fm \otimes \gm).\]

   \item\label{axiom:slide} %
\defining{linkslidingaxiom}{Sliding}. Feedback is invariant to applying an isomorphism ``just before'' or ``just after'' the feedback. In other words, feedback is dinatural over the isomorphisms of the category.
That is to say that for every $\fm \colon \sT \tensor \sA \to \sS \tensor \sB$ and every isomorphism $h \colon S \to T$,
\[\fbkOn{}{\sT}{\sA}{\sB}(\fm \comp (\hm \otimes \im)) =
  \fbkOn{}{\sS}{\sA}{\sB}((\hm \otimes \im) \comp \fm).\]
 \end{enumerate}
\begin{figure}
  \centering
  \begin{tabular}{cc}
    \tightone $\overset{\ref{axiom:tight}}{=}$ \tighttwo &
    \vanishingone  $\overset{\ref{axiom:vanish}}{=}$ \vanishingtwo \\
    \vanishingproductone  $\overset{\ref{axiom:join}}{=}$ \vanishingproducttwo &
   \strengthone  $\overset{\ref{axiom:strength}}{=}$ \strengthtwo
  \end{tabular}
  \begin{ceqn} %
  \begin{align*}
    \slideone  \overset{\ref{axiom:slide}}{=} \slidetwo \mbox{ ($h$ isomorphism)}
  \end{align*}
  \end{ceqn}
    \caption{Diagrammatic depiction of the axioms of feedback.}
  \label{figure:axioms}
\end{figure}

 \end{definition}

The natural notion of homomorphism between feedback categories is that of a symmetric monoidal functor that moreover preserves the feedback structure.  These are called \emph{feedback functors}.

\begin{definition}
  A \defining{linkfeedbackfunctor}{feedback functor} $F \colon \catC \to \catD$
  between two \hyperlink{linkcategorywithfeedback}{feedback categories}
  $(\catC,\fbkOn{\catC}{}{}{})$ and $(\catD,\fbkOn{\catD}{}{}{})$ is a \hyperlink{linkmonoidalfunctor}{strong symmetric monoidal functor} such that feedback is transported, that is,
  \[ F(\fbkOn{\catC}{\sS}{\sA}{\sB}(\fm)) = \fbkOn{\catD}{F(\sS)}{F(\sA)}{F(\sB)}(\mu \comp Ff \comp \mu^{-1}),\]
  where \(\mu_{A,B} \colon F(\sA) \tensor F(\sB) \to F(\sA \tensor \sB )\) is the isomorphism of the strong monoidal functor \(F\). %
  We write \defining{linkcatfeedback}{$\mathsf{Feedback}$} for the category of (small)
  \hyperlink{linkcategorywithfeedback}{feedback categories} and \hyperlink{linkfeedbackfunctor}{feedback
    functors}.  There is a forgetful functor ${\cal U} \colon \FEEDBACK \to \SYMMON$.
\end{definition}

\begin{remark}
  Thanks to the coherence theorem (\Cref{theorem:coherence}), we can present the axioms of a feedback category as in~\Cref{definition:feedback}, omitting associators and unitors.
  In fact, to be explicit, the statement of the \hyperlink{linkvanishingaxiom}{vanishing axiom} is
  \[\fbkOn{}{\sI}{}{}(\lambda_{A} \comp f \comp \inverse{\lambda_{B}}) = f\]
  because the feedback operator, $\fbkOn{}{I}{}{}$, needs to be applied to a morphism $I \otimes A \to I \otimes B$, and the only morphism whose strictification has type $A \to B$ is $(\lambda_{A} \comp f \comp \inverse{\lambda_{B}}) \colon I \otimes A \to I \otimes B$ (see~\Cref{theorem:coherence}).
  Similarly, the \hyperlink{linkjoiningaxiom}{joining axiom} really states that
  \[\fbkOn{}{\sS}{\sA}{\sB}(\fbkOn{}{\sT}{\sS \otimes \sA}{\sS \otimes \sB}(\fm)) = \fbkOn{}{\sS \otimes \sT}{A}{B}(\alpha_{\sS,\sT,A} \comp \fm \comp \alpha^{-1}_{\sS,\sT,B}).\]
\end{remark}

\begin{remark}
  Our feedback operator takes a morphism $S \otimes A \to S \otimes B$ with the first component $S$ of the tensor in both the domain and the codomain being the object ``fed back''. Given that $S$ appears in the first position in both the domain and the codomain, we refer to this as \emph{aligned feedback}.

  An alternative definition is possible, and appears in the exposition of traces by Ponto and Shulman~\cite{ponto14}.
  We call this \emph{twisted feedback}: here $\fbk(\bullet)$ is an
  operator that takes a morphism $S \otimes A \to B \otimes S$---note the position of $S$ in the codomain---and yields a
  morphism $A \to B$.
  \[\infer{\fbkOn{}{\sS}{\sA}{\sB}(\fm) \colon \sA \to \sB}{f \colon \sS \otimes \sA \to \sB \tensor \sS}\]
  The advantage of using \emph{twisted feedback} is that sequential composition of processes with feedback does not require symmetry of the underlying monoidal category (see~\cite{sabadini95}, where the authors consider a category with twisted feedback).
  However, parallel composition \emph{does} require symmetry. Given that we study the monoidal category of feedback processes, and aligned feedback diagrams are more readable, we use only aligned feedback in this paper.

  \begin{figure}[H]
    \centering
    \vspace{0.5em}
    \twistedvsaligned{}
    \caption{Twisted vs. aligned feedback}
  \end{figure}
\end{remark}

\subsection{Traced Monoidal Categories}\label{section:trace}

Feedback categories are a weakening of \tracedMonoidalCategories{}, which have found several applications in computer science.
Indeed, since their conception~\cite{joyal96} as an abstraction of the \emph{trace} of a matrix in linear algebra, they were used in linear logic and geometry of interaction~\cite{abramsky14,girard87,girard89}, programming language semantics~\cite{hasegawa97}, semantics of recursion~\cite{adamek06} and fixed point operators~\cite{hasegawa02,benton03}.

Between feedback categories and traced monoidal categories there is an intermediate notion called \emph{right traced category}~\cite{selinger10}.
Here, the sliding axiom applies not only to isomorphisms but rather to arbitrary morphisms.
This strengthening is already unsuitable for our purposes (see~\Cref{rem:sliding-isos}).
However, the difference in the sliding axiom is not dramatic: we will generalize the notion of feedback category to allow the choice of morphisms that can be ``slid'' through the feedback loop (\Cref{sec:generalizingfeedback}).
For example, it is possible to require the sliding axiom for all the morphisms, as in the case of right traced categories, or just isomorphisms, as in the case of feedback categories.
The more serious conceptual difference between feedback categories and  \tracedMonoidalCategories{} is the ``yanking axiom'' of traced monoidal categories (in \Cref{fig:theyankingaxiom}).
The \yankingAxiom{} is incontestably elegant from the geometrical point of view: strings are ``pulled'', and feedback (the loop with two arrowtips) disappears.

\begin{figure}[h]
  \centering
  \theYankingEquation
  \caption{The yanking axiom.}
  \label{fig:theyankingaxiom}
\end{figure}

Strengthening the \slidingaxiom{} and adding the \yankingAxiom{}
yields the definition of \tracedMonoidalCategory{}.
\begin{definition}
  A \defining{linktracedmonoidalcategory}{traced monoidal category} \cite{joyal96,selinger10} is a \categoryWithFeedback{} that additionally satisfies the \emph{yanking axiom} $\fbk(\sigma) = \im$ and the \emph{sliding axiom},
  \(\fbkOn{}{\sT}{\sA}{\sB}(f \comp (h \otimes \im)) = \fbkOn{}{\sS}{\sA}{\sB}((h \otimes \im) \comp f)\), for an arbitrary morphism $h \colon S \to T$.
  We commonly denote by $\mathsf{tr}(\bullet)$ the feedback operator of a traced monoidal category.
\end{definition}

\

\begin{figure}[H]
  \centering
  \norlatchtraced
  \caption{Diagram for the NOR latch, modeled with a trace in $\protect\SpanGraph$.}\label{fig:nor-latch-trace}
\end{figure}

There is scope for questioning the validity of the yanking axiom in many applications that feature feedback.
If
feedback can disappear without leaving any imprint, that must mean that it is
\emph{instantaneous}: its output necessarily mirrors its input.\footnote{In
  other words, traces are used to talk about processes in \emph{equilibrium},
  processes that have reached a \emph{fixed point}. A theorem by Hasegawa \cite{hasegawa02} and
  Hyland \cite{benton03} corroborates this interpretation: a trace in a cartesian category
  corresponds to a \emph{fixpoint operator}.} Importantly for our purposes, this
 implies that a feedback satisfying the yanking equation is
``memoryless'', or ``stateless''.

In engineering and computer science, instantaneous feedback is actually a rare concept; a more common notion is that of \emph{guarded feedback}.
Consider \emph{signal flow graphs} \cite{shannon42,mason53}: their categorical
interpretation in \cite{bonchi17} models feedback not by the usual trace, but by
a trace ``guarded by a register'', that \emph{delays the signal} and violates the yanking axiom (see Remark 7.8 \emph{op}.\emph{cit}.). %

\begin{example}
  Let us return to our running example of the NOR latch from \Cref{fig:norintro}.
  We have seen how to model NOR gates in $\SpanGraph$ in \Cref{fig:norandlatch}, and the algebra of $\SpanGraph$ does include a trace.
  However, imitating the real-world behavior of the NOR latch with \emph{just} a trace is unsatisfactory: the trace of $\SpanGraph$ is built out of stateless components, and tracing stateless components yields a stateless component (see \Cref{fig:nor-latch-trace}, later detailed in \Cref{sec:componentsspangraph}).
\end{example}

\subsection{Delay and Feedback}\label{section:delayfeedback}

As we have discussed previously, the major conceptual difference between \hyperlink{linkcategorywithfeedback}{feedback categories} and \tracedMonoidalCategories{} is the rejection of the \yankingAxiom{}.
Indeed, a non-trivial delay is what sets apart feedback categories from traced monoidal categories.

We can isolate the delay component in a feedback category.
Consider the process that only ``feeds back'' the input to itself and then just outputs that ``fed back'' input.
The process interpretation of monoidal categories (\Cref{sec:theories-processes}) allows us to understand this process as delaying its input and returning it as output~\cite{monoidalStreams22}.
This process,
$\delay_{\sA} \coloneqq \fbkOn{}{\sA}{}{}(\sigma_{\sA,\sA})$, is called the
\emph{delay endomorphism} and is illustrated in \Cref{fig:delay}.
\begin{figure}[h]
  \centering
  \definingDelay{}
  \caption{Definition of \emph{delay}.\label{fig:delay}}
\end{figure}

\medskip
If a category has enough structure, feedback can be understood as the combination of \emph{trace} and
\emph{delay} in a formal sense.
\defining{linkcompactclosed}{Compact closed categories} are \tracedMonoidalCategories{} where every object $A$
has a dual $A^{\star}$ and the trace is constructed from two pieces $\varepsilon \colon A \otimes A^{\star} \to I$ and $\eta \colon I \to A^{\star} \otimes A$.
While not every \tracedMonoidalCategory{} is compact closed, they all
embed fully faithfully into a compact closed category.\footnote{This is the $\mathbf{Int}$ construction from Joyal, Street and Verity \cite{joyal96}.}
In a \compactClosedCategory{}, a feedback operator is necessarily a trace ``guarded'' by a \emph{delay}.

\begin{figure}[h]
  \centering
  \norlatchfeedback
  \caption{NOR latch with feedback.}
  \label{fig:norwithfeedback}
\end{figure}

\begin{proposition}[Feedback from delay~\cite{bonchi19}]\label{proposition:delaycompactclosed}
  Let $\catC$ be a \compactClosedCategory{} with $\fbkOn{\catC}{}{}{}$ a
  feedback operator that takes a morphism $S \otimes A \to S \otimes B$ to a
  morphism $A \to B$, satisfying the axioms of feedback (in
  \Cref{figure:axioms}) but possibly failing to satisfy the yanking axiom (\Cref{fig:theyankingaxiom}) of
  traced monoidal categories. Then, the feedback operator is necessarily of the
  form
  \[
    \fbkOn{\catC}{\sS}{}{}(f) \coloneqq
      (\eta \otimes \im) \comp
      (\im \otimes f) \comp
      (\im \otimes \delay_{S} \otimes \im) \comp
      (\varepsilon \otimes \im)
  \]
  where $\delay_{A} \colon A \to A$ is a family of endomorphisms satisfying
  \begin{itemize}[label=$\bullet$]
    \item $\delay_{A} \otimes \delay_{B} = \delay_{A \otimes B}$ and $\delay_{I} = \im$, and
    \item $\delay_{A} \comp \hm = \hm \comp \delay_{B}$ for each isomorphism $\hm \colon A \cong B$.
  \end{itemize}
  In fact, any family of morphisms $\delay_{A}$ satisfying these properties
  determines uniquely a feedback operator that has $\delay_{A}$ as its delay
  endomorphisms.
\end{proposition}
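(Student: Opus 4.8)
The plan is to split the proposition into two halves and glue them together via one observation: since $\catC$ is compact closed, the canonical trace $\mathsf{tr}$ built from $\eta$ and $\varepsilon$ is \emph{itself} a feedback operator (a \tracedMonoidalCategory{} is in particular a \categoryWithFeedback{}), and the proposed formula is exactly $\fbkOn{\catC}{\sS}{}{}(\fm) = \mathsf{tr}_{\sS}(\fm \comp (\delay_{\sS} \otimes \im))$. Thus the whole statement reduces to three points: (i) the family $\delay_{\sA} \coloneqq \fbkOn{\catC}{\sA}{}{}(\sigma_{\sA,\sA})$ enjoys the two listed properties; (ii) every feedback operator coincides with the trace guarded by its own delay; and (iii) conversely every admissible delay family arises this way, uniquely.

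First I would establish the properties of $\delay$ directly from the axioms. That $\delay_{\sI} = \im$ is immediate from \ref{axiom:vanish} together with $\sigma_{\sI,\sI} = \im$. Multiplicativity $\delay_{\sA} \otimes \delay_{\sB} = \delay_{\sA \otimes \sB}$ follows by decomposing $\sigma_{\sA \otimes \sB, \sA \otimes \sB}$ into component symmetries and splitting the single feedback over $\sA \otimes \sB$ into two nested feedbacks via \ref{axiom:join}, reorganising the loops with \ref{axiom:strength} and \ref{axiom:slide}. Naturality $\delay_{\sA} \comp \hm = \hm \comp \delay_{\sB}$ for an isomorphism $\hm$ is precisely dinaturality over isomorphisms, i.e. an instance of \ref{axiom:slide} applied at $\sigma$, after moving $\hm$ onto the fed-back wire using \ref{axiom:tight} and naturality of the symmetry.

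The core of the forward direction is the identity $\fbkOn{\catC}{\sS}{}{}(\fm) = \mathsf{tr}_{\sS}(\fm \comp (\delay_{\sS} \otimes \im))$, which I would prove by a string-diagrammatic computation exploiting compact closure. Starting from $\fbkOn{\catC}{\sS}{}{}(\fm)$, I bend the fed-back $\sS$-wire of $\fm$ around the box using the cap $\eta$ and cup $\varepsilon$; the zigzag (snake) identities let me absorb $\fm$ onto the non-fed-back wires $\sA, \sB$ through tightening \ref{axiom:tight}, so that the genuine feedback loop no longer encircles $\fm$ but only a residual symmetry $\sigma_{\sS,\sS}$, whose feedback is by definition $\delay_{\sS}$. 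Reassembling the cup, cap and delay gives the displayed formula. This step is the main obstacle: unlike a trace, the sliding axiom \ref{axiom:slide} holds only for isomorphisms, so $\fm$ cannot simply be slid around the loop. The role of compact closure is exactly to relocate $\fm$ off the loop at the cost of a single self-loop on the (iso) symmetry, and it is this self-loop that produces the delay rather than an identity (which is what the \yankingAxiom{} would force).

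For the converse and the uniqueness clause, I would take any family $\delay_{\sA}$ with the two stated properties, define $G_{\sS}(\fm) \coloneqq \mathsf{tr}_{\sS}(\fm \comp (\delay_{\sS} \otimes \im))$, and verify the feedback axioms \ref{axiom:tight}--\ref{axiom:slide} for $G$: vanishing and joining follow from the corresponding properties of $\mathsf{tr}$ together with $\delay_{\sI} = \im$ and multiplicativity of $\delay$; tightening and strength from naturality and monoidality of the trace, the delay sitting harmlessly on the $\sS$-wire; and sliding from sliding of $\mathsf{tr}$ combined with naturality of $\delay$ over isomorphisms. A short calculation using $\mathsf{tr}_{\sS}(\sigma_{\sS,\sS}) = \im$ and naturality of the symmetry then yields $G_{\sS}(\sigma_{\sS,\sS}) = \delay_{\sS}$, so $G$ has the prescribed delay. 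Uniqueness is then immediate from the forward identity: any feedback operator equals the trace guarded by its own delay, hence is determined by that delay, so $\delay \mapsto G$ and $\fbk \mapsto \delay$ are mutually inverse.
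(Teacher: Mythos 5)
Your proposal is correct and takes essentially the same route as the paper's proof: your forward direction is exactly the paper's compact-closure computation (insert $\eta$/$\varepsilon$ pairs via the snake identities, use tightening to pull $f$ out of the loop so that only $\fbkOn{\catC}{\sS}{}{}(\sigma) = \delay_{\sS}$ remains fed back), and your converse matches the paper's ``define the delay-guarded trace and verify the feedback axioms.'' Your explicit derivations of the delay-family properties ($\delay_{\sI} = \im$ from vanishing, multiplicativity from joining, naturality under isomorphisms from sliding) and the axiom checks for $G$ only fill in details the paper states without verification.
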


\begin{proof}
  \begin{figure}[h!]
    \centering
    \feedbackFromDelay
    \caption{Feedback from delay.}\label{fig:feedbackfromdelay}
  \end{figure}
  Given a family $\delay_{S}$ satisfying the two
  properties, we can define a feedback structure, shown in \Cref{fig:feedbackfromdelay}, to be
  $\fbkOn{\catC}{\sS}{}{}(f) \coloneqq (\eta \otimes \im) \comp (\im \otimes f) \comp (\im \otimes \delay_{S} \otimes \im) \comp (\varepsilon \otimes \im)$
  and check that it satisfies all the axioms of feedback (\Cref{figure:axioms}).
  Note here that, as expected, the yanking equation is satisfied precisely when
  delay endomorphisms are identities, $\delay_{A} = \im_{A}$.

  Let us now show that any feedback operator in a compact closed category is
  of this form (\Cref{fig:compactproof}). Indeed,
  \begin{align*}
    \fbkOn{\catC}{\sS}{}{}(f) &=
    \fbkOn{\catC}{\sS}{}{}(
       (\im \otimes \eta \otimes \eta \otimes \im) \comp
       (\sigma \otimes \sigma \otimes f) \comp
       (\im \otimes \varepsilon \otimes \varepsilon \otimes \im)
       ) \\ &=
       (\im \otimes \eta \otimes \eta \otimes \im) \comp
       (\fbkOn{\catC}{\sS}{}{}(\sigma) \otimes \sigma \otimes f) \comp
       (\im \otimes \varepsilon \otimes \varepsilon \otimes \im) \\ &=
       (\eta \otimes \im) \comp (\im \otimes f) \comp (\im \otimes \fbkOn{\catC}{\sS}{}{}(\sigma) \otimes \im) \comp (\varepsilon \otimes \im).
  \end{align*}

  \begin{figure}
    \centering
\tikzset{every picture/.style={line width=0.75pt}} %
\begin{tikzpicture}[x=0.75pt,y=0.75pt,yscale=-1,xscale=1]
\draw    (50,160) ;
\draw [shift={(50,160)}, rotate = 0] [color={rgb, 255:red, 0; green, 0; blue, 0 }  ][line width=0.75]    (17.64,-3.29) .. controls (13.66,-1.4) and (10.02,-0.3) .. (6.71,0) .. controls (10.02,0.3) and (13.66,1.4) .. (17.64,3.29)(10.93,-3.29) .. controls (6.95,-1.4) and (3.31,-0.3) .. (0,0) .. controls (3.31,0.3) and (6.95,1.4) .. (10.93,3.29)   ;
\draw   (50,170) -- (70,170) -- (70,210) -- (50,210) -- cycle ;
\draw    (30,200) -- (50,200) ;
\draw    (50,160) .. controls (30,160.25) and (30.5,179.25) .. (50,180) ;
\draw    (50,160) -- (70,160) ;
\draw    (70,160) .. controls (90.4,160.6) and (90,179.75) .. (70,180) ;
\draw    (70,200) -- (90,200) ;
\draw    (230,130.01) ;
\draw [shift={(230,130.01)}, rotate = 0] [color={rgb, 255:red, 0; green, 0; blue, 0 }  ][line width=0.75]    (17.64,-3.29) .. controls (13.66,-1.4) and (10.02,-0.3) .. (6.71,0) .. controls (10.02,0.3) and (13.66,1.4) .. (17.64,3.29)(10.93,-3.29) .. controls (6.95,-1.4) and (3.31,-0.3) .. (0,0) .. controls (3.31,0.3) and (6.95,1.4) .. (10.93,3.29)   ;
\draw   (230,170.01) -- (250,170.01) -- (250,210.01) -- (230,210.01) -- cycle ;
\draw    (210,200.01) -- (230,200.01) ;
\draw    (226.23,130) .. controls (206.23,130.25) and (207.03,140) .. (216.23,140) ;
\draw    (226.23,130) -- (246.23,130) ;
\draw    (246.23,130) .. controls (266.63,130.6) and (266.63,140.4) .. (256.23,140) ;
\draw    (250,200.01) -- (270,200.01) ;
\draw    (216.23,140) .. controls (230.8,140) and (236,150.8) .. (246.23,150) ;
\draw    (226.23,150) .. controls (238.45,150.48) and (246,140.8) .. (256.23,140) ;
\draw    (226.23,150) .. controls (206.23,150.25) and (207.03,160) .. (216.23,160) ;
\draw    (246.23,150) .. controls (266.63,150.6) and (266.63,160.4) .. (256.23,160) ;
\draw    (216.23,160) .. controls (230.8,160) and (249.77,170.81) .. (260,170.01) ;
\draw    (220,170.01) .. controls (227.77,170.02) and (237.2,160.81) .. (256.23,160) ;
\draw    (220,170.01) .. controls (210.8,170.81) and (202.8,181.21) .. (230,180.01) ;
\draw    (260,170.01) .. controls (269.6,170.41) and (271.2,180.81) .. (250,180.01) ;
\draw    (140,149.98) ;
\draw [shift={(140,149.98)}, rotate = 0] [color={rgb, 255:red, 0; green, 0; blue, 0 }  ][line width=0.75]    (17.64,-3.29) .. controls (13.66,-1.4) and (10.02,-0.3) .. (6.71,0) .. controls (10.02,0.3) and (13.66,1.4) .. (17.64,3.29)(10.93,-3.29) .. controls (6.95,-1.4) and (3.31,-0.3) .. (0,0) .. controls (3.31,0.3) and (6.95,1.4) .. (10.93,3.29)   ;
\draw   (140,170) -- (160,170) -- (160,210) -- (140,210) -- cycle ;
\draw    (120,200) -- (140,200) ;
\draw    (136.23,149.97) .. controls (116.23,150.22) and (117.03,159.97) .. (126.23,159.97) ;
\draw    (136.23,149.98) -- (160,149.99) ;
\draw    (160,149.99) .. controls (180.4,150.59) and (180.4,160.39) .. (170,159.99) ;
\draw    (160,200) -- (180,200) ;
\draw    (126.23,159.97) .. controls (137.03,159.57) and (139.6,170.39) .. (130,169.99) ;
\draw    (130,170) .. controls (120.8,170.8) and (112.8,181.2) .. (140,180) ;
\draw    (173.77,170) .. controls (182,169.99) and (184.4,180.79) .. (160,179.99) ;
\draw    (170,159.99) .. controls (163.77,160) and (163.77,170) .. (173.77,170) ;
\draw   (320,170.01) -- (340,170.01) -- (340,210.01) -- (320,210.01) -- cycle ;
\draw    (300,200.01) -- (320,200.01) ;
\draw    (340,200.01) -- (360,200.01) ;
\draw    (320,150) .. controls (300,150.25) and (297.03,160) .. (306.23,160) ;
\draw    (340,150) .. controls (360.4,150.6) and (356.63,160.4) .. (346.23,160) ;
\draw    (306.23,160) .. controls (320.8,160) and (339.77,170.81) .. (350,170.01) ;
\draw    (310,170.01) .. controls (317.77,170.02) and (327.2,160.81) .. (346.23,160) ;
\draw    (310,170.01) .. controls (300.8,170.81) and (292.8,181.21) .. (320,180.01) ;
\draw    (350,170.01) .. controls (359.6,170.41) and (361.2,180.81) .. (340,180.01) ;
\draw   (320,140) -- (330,140) .. controls (335.52,140) and (340,144.48) .. (340,150) .. controls (340,155.52) and (335.52,160) .. (330,160) -- (320,160) -- cycle ;
\draw   (410,170) -- (430,170) -- (430,210) -- (410,210) -- cycle ;
\draw    (430,200) -- (480,200) ;
\draw    (390,200) -- (410,200) ;
\draw    (460,160) .. controls (480.5,159.75) and (480,179.75) .. (460,180) ;
\draw    (410,160) .. controls (390,160.25) and (390.5,179.25) .. (410,180) ;
\draw    (410,160) -- (460,160) ;
\draw    (430,180) -- (440,180) ;
\draw   (440,170) -- (450,170) .. controls (455.52,170) and (460,174.47) .. (460,180) .. controls (460,185.52) and (455.52,190) .. (450,190) -- (440,190) -- cycle ;
\draw (60,190) node  [font=\normalsize]  {$f$};
\draw (37,207) node  [font=\scriptsize]  {$A$};
\draw (83.5,206.5) node  [font=\scriptsize]  {$B$};
\draw (63.5,147.5) node  [font=\footnotesize]  {$S$};
\draw (216.5,206.5) node  [font=\scriptsize]  {$A$};
\draw (173.5,206.5) node  [font=\scriptsize]  {$B$};
\draw (236.5,117.5) node  [font=\footnotesize]  {$S$};
\draw (126.5,206.5) node  [font=\scriptsize]  {$A$};
\draw (263.5,206.5) node  [font=\scriptsize]  {$B$};
\draw (150,190) node  [font=\normalsize]  {$f$};
\draw (146.5,137.5) node  [font=\footnotesize]  {$S$};
\draw (240,190.01) node  [font=\normalsize]  {$f$};
\draw (91,182.4) node [anchor=north west][inner sep=0.75pt]    {$ =$};
\draw (181,182.4) node [anchor=north west][inner sep=0.75pt]    {$ =$};
\draw (306.5,206.5) node  [font=\scriptsize]  {$A$};
\draw (353.5,206.5) node  [font=\scriptsize]  {$B$};
\draw (330,190.01) node  [font=\normalsize]  {$f$};
\draw (330,150) node  [font=\footnotesize]  {$\partial$};
\draw (271,182.4) node [anchor=north west][inner sep=0.75pt]    {$=$};
\draw (420,190) node  [font=\normalsize]  {$f$};
\draw (396.5,206.5) node  [font=\scriptsize]  {$A$};
\draw (473.5,206.5) node  [font=\scriptsize]  {$B$};
\draw (437,152) node  [font=\footnotesize]  {$S$};
\draw (450,180) node  [font=\footnotesize]  {$\partial$};
\draw (361,183.4) node [anchor=north west][inner sep=0.75pt]    {$=$};
\end{tikzpicture}
\caption{Feedback in a compact closed category.}\label{fig:compactproof}
\end{figure}

  Here we have used the fact that the trace is constructed by two separate pieces:
  $\varepsilon$ and $\eta$; and then the fact that the feedback operator, like trace, can be applied ``locally'' (see the axioms in \Cref{figure:axioms}).
\end{proof}

\begin{example}
  Consider again the NOR latch of \Cref{fig:norintro}.
  The algebra of the category $\SpanGraph$ does include a feedback operator that is \emph{not} a trace -- the difference is an additional \emph{stateful} delay component.
  As we shall see, this notion of feedback is canonical.
We shall also see that the delay enables us to capture
the real-world behavior of the NOR latch (\Cref{fig:norwithfeedback}).
\end{example}
The emergence of state from feedback is witnessed by the $\St(\bullet)$ construction, which we recall below.

\subsection{$\mathsf{St}(\bullet)$, the Free Feedback Category}\label{section:freefeedback}

Here we show how to obtain the free feedback category on a symmetric monoidal category. The $\St(\bullet)$ construction is
a general way of endowing a system with state. It appears multiple times in
the literature in slightly different forms: it is used to arrive at a stateful resource
calculus in~\cite{bonchi19}; a variant is used for geometry of interaction in~\cite{hoshino14}; it coincides with the free \categoryWithFeedback{} presented in~\cite{katis02}; and yet another, slightly different formulation was given in~\cite{sabadini95}.

\begin{definition}[Category of stateful processes,~\cite{katis02}]\label{def:sequentialcomposition} \nicelinktarget{linkFbk} \nicelinktarget{linkfeedbackcircuit}
  Let $(\catC, \otimes, I)$ be a \hyperlink{linksymmetricmonoidalcategory}{symmetric monoidal category}. We write
  ${\color{NavyBlue}{\mathsf{St}}}(\catC)$ for the category with the objects of $\catC$ but where
  morphisms $A \to B$ are pairs $(S \mid f)$, consisting of a \emph{state space}
  $S \in \catC$ and a morphism $f \colon S \otimes A \to S \otimes B$. We consider
  morphisms up to isomorphism classes of their state space, and thus
  \[(\sS \mid \fm) = (\sT \mid (h^{-1} \otimes \im) \comp \fm \comp (h \otimes \im)), \quad \mbox{ for any isomorphism } h \colon \sS \cong \sT.\]
  When depicting a \statefulProcess{} (\Cref{fig:statefulprocessdepiction}), we mark the state strings.
  \begin{figure}[H]
   \centering
    \cdDefA = \cdDefAB
    \caption{Equivalence of stateful processes. We depict \protect\statefulProcesses{} by marking the space state.}
    \label{fig:statefulprocessdepiction}
  \end{figure}
\end{definition}

We define the \defining{linkidentitycircuit}{identity \statefulProcessName{}} on \(\sA \in \catC\) as \((\sI \mid \im_{I \otimes A})\).
\defining{linksequentialcomposition}{Sequential composition} of the two \statefulProcesses{} \((\sS \mid \fm) \colon \sA \to \sB\) and \((\sT \mid \gm) \colon \sB \to \sC\) is defined by
\((\sS \mid \fm) \comp (\sT \mid \gm) =
  (\sS \otimes \sT \mid (\sigma \otimes \im) \comp
  (\im \otimes \fm) \comp
  (\sigma \otimes \im) \comp
  (\im \otimes \gm))\), see \Cref{fig:sequential-and-parallel}, left.
\label{def:parallelcomposition}
\defining{linkparallelcomposition}{Parallel composition} of the two \statefulProcesses{} \((S \mid f) \colon A \to B\) and \((S' \mid f') \colon A' \to B'\) is defined by
  \((S \mid f) \otimes (S' \mid f') =
   (S \tensor S' \mid (\im \tensor \sigma \tensor \im) \comp
   (f \tensor f') \comp
   (\im \tensor \sigma \tensor \im))\), see \Cref{fig:sequential-and-parallel}, right.
   In both cases, the state spaces of the components are tensored together.
\begin{figure}[H]
   \centering
    \sequentialCompositionOfCircuits \quad \cdParCompA
    \caption{\protect\hyperlink{linksequentialcomposition}{Sequential} and
      \protect\hyperlink{linkparallelcomposition}{parallel composition} of
    \protect\statefulProcesses.}\label{fig:sequential-and-parallel}
\end{figure}

This defines a symmetric monoidal category. Moreover, the operator
\[\defining{linkstore}{\ensuremath{\store{\sT}}}(\sS \mid \fm) \coloneqq (\sS \otimes \sT \mid \fm)\mbox{, for } f \colon \sS \tensor \sT \tensor \sA \to \sS \tensor \sT \tensor \sB,\]
which ``stores'' some information into the state, makes it a feedback category, see \Cref{fig:storediagram}.
\begin{figure}[H]
  \centering
  $\store{T} \left(\cdPreFeedback \right)\ =\ \cdPreFeedbackB$
\caption{The $\protect\store{}(\bullet)$ operation, diagrammatically.}\label{fig:storediagram}
\end{figure}

\begin{proposition}
  Sequential composition of stateful processes is associative. That is, for every
  $\fm \colon \sS \tensor \sA \to \sS \tensor \sB$, every $\gm \colon \sT \tensor \sB \to \sT \tensor \sC$ and every $\hm \colon \sR \tensor \sC \to \sR \tensor \sD$,
  \[((\sS \mid \fm) \comp (\sT \mid \gm)) \comp (\sR \mid \hm) =
  (\sS \mid \fm) \comp ((\sT \mid \gm) \comp (\sR \mid \hm)).\]
\end{proposition}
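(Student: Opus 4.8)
The plan is to verify the equality of the two underlying morphisms directly, after reducing the general case to the strict one. By the coherence theorem (\Cref{theorem:coherence}) we may assume that $\catC$ is strict, so that associators and unitors are identities; in particular the two candidate state spaces $(\sS \otimes \sT) \otimes \sR$ and $\sS \otimes (\sT \otimes \sR)$ become literally equal. This is exactly what is needed: morphisms of stateful processes are taken up to isomorphism of the state space, and the isomorphism relevant here is precisely the associator $\alpha_{\sS,\sT,\sR}$, which becomes the identity in the strict setting. Hence it suffices to prove that the two composites coincide on the nose as morphisms $\sS \otimes \sT \otimes \sR \otimes \sA \to \sS \otimes \sT \otimes \sR \otimes \sD$.

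Next I would unfold both sides using the definition of \emph{sequential composition}. The left grouping first forms the stateful process with state $\sS \otimes \sT$ whose morphism is $(\sigma_{\sS,\sT} \otimes \im) \comp (\im \otimes \fm) \comp (\sigma_{\sT,\sS} \otimes \im) \comp (\im \otimes \gm)$, and then composes the result with $(\sR \mid \hm)$; the resulting structural part contains the ``block'' swap $\sigma_{\sS \otimes \sT,\, \sR}$ together with its inverse. Symmetrically, the right grouping produces a morphism containing $\sigma_{\sS,\, \sT \otimes \sR}$ and its inverse. The only genuine difference between the two expressions is how these compound-state swaps are presented. Decomposing them into elementary swaps via the hexagon equations of the symmetric monoidal structure, $\sigma_{\sS \otimes \sT,\, \sR} = (\im_{\sS} \otimes \sigma_{\sT,\sR}) \comp (\sigma_{\sS,\sR} \otimes \im_{\sT})$ and its companion for $\sigma_{\sS,\, \sT \otimes \sR}$, rewrites everything purely in terms of $\sigma_{\sS,\sT}$, $\sigma_{\sS,\sR}$ and $\sigma_{\sT,\sR}$.

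I would then bring both expressions to a common normal form. Using naturality of the symmetry, the axiom $(\fm \otimes \gm) \comp \sigma = \sigma \comp (\gm \otimes \fm)$, one slides each box $\fm$, $\gm$, $\hm$ past the structural swaps until it acts on its own state wire adjacent to the data wire, and cancels every pair of adjacent inverse swaps using $\sigma_{X,Y} \comp \sigma_{Y,X} = \im$. Both groupings collapse to the same ``staircase'': route $\sS$ next to the data and apply $\fm$; route $\sT$ and apply $\gm$; route $\sR$ and apply $\hm$; then undo the routing to restore the state order $\sS \otimes \sT \otimes \sR$. This shape is forced by the causal order of the boxes, since $\gm$ consumes the output $\sB$ of $\fm$ and $\hm$ the output $\sC$ of $\gm$, so the two normal forms necessarily agree.

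The routine but only delicate step is the symmetry bookkeeping in the middle: tracking which wire each elementary swap moves, and in which order, across the many swaps introduced by the hexagon decomposition. This is best carried out with string diagrams, where the manipulation is entirely mechanical. Indeed, by coherence for symmetric monoidal categories, two morphisms assembled from coherence isomorphisms, structural swaps and the boxes $\fm, \gm, \hm$ are equal as soon as they realise the same permutation of wires and connect the boxes' inputs and outputs in the same way; by the causal-order observation above, the left- and right-associated composites manifestly do, which completes the argument.
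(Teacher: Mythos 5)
Your proposal is correct and follows essentially the same route as the paper's own proof: the paper also unfolds both groupings via the definition of sequential composition, verifies that the two composites agree by string-diagram manipulation (i.e.\ the symmetric monoidal axioms you spell out algebraically as hexagon decompositions, naturality of the symmetry, and swap cancellations), and handles the mismatch of state spaces by the associator $\alpha_{\sS,\sT,\sR}$ together with the equivalence relation on stateful processes. Your explicit appeal to the coherence theorem to reduce to the strict case is just a more pedantic rendering of what the paper does implicitly when it reasons with string diagrams.
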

\begin{proof}
  We can see both morphisms are equal by applying transformations of string diagrams: i.e. the axioms of symmetric monoidal categories (\Cref{fig:associativityproof}).
  \begin{figure}[!h]
  \[\begin{tikzpicture}[x=0.75pt,y=0.75pt,yscale=-1,xscale=1]
\draw   (110,60) -- (130,60) -- (130,100) -- (110,100) -- cycle ;
\draw    (130,90) -- (180,90) ;
\draw    (20,90) -- (110,90) ;
\draw    (70,50) .. controls (85,50) and (85.33,69.67) .. (100,70) ;
\draw    (70,70) .. controls (85,69.33) and (85.67,50.33) .. (100,50) ;
\draw    (100,50) -- (140,50) ;
\draw    (140,50) .. controls (155,50) and (155.33,69.67) .. (170,70) ;
\draw    (140,70) .. controls (155,69.33) and (155.67,50.33) .. (170,50) ;
\draw    (100,70) -- (110,70) ;
\draw    (130,70) -- (140,70) ;
\draw   (180,60) -- (200,60) -- (200,100) -- (180,100) -- cycle ;
\draw    (200,90) -- (260,90) ;
\draw    (170,50) -- (220,50) ;
\draw    (170,70) -- (180,70) ;
\draw    (200,70) -- (220,70) ;
\draw    (30,70) .. controls (45,69.33) and (45.67,30.33) .. (60,30) ;
\draw    (30,30) .. controls (45,30) and (45.33,49.67) .. (60,50) ;
\draw    (30,50) .. controls (45,50) and (45.33,69.67) .. (60,70) ;
\draw    (60,70) -- (70,70) ;
\draw    (60,50) -- (70,50) ;
\draw    (20,30) -- (30,30) ;
\draw    (20,50) -- (30,50) ;
\draw    (20,70) -- (30,70) ;
\draw    (60,30) -- (220,30) ;
\draw    (220,30) .. controls (236.2,30.6) and (235.67,70.33) .. (250,70) ;
\draw    (220,70) .. controls (235,70) and (235.33,49.67) .. (250,50) ;
\draw    (220,50) .. controls (235,50) and (235.33,29.67) .. (250,30) ;
\draw    (250,30) -- (300,30) ;
\draw    (250,50) -- (300,50) ;
\draw    (250,70) -- (260,70) ;
\draw   (260,60) -- (280,60) -- (280,100) -- (260,100) -- cycle ;
\draw    (280,70) -- (300,70) ;
\draw    (280,90) -- (300,90) ;
\draw  [color={rgb, 255:red, 155; green, 155; blue, 155 }  ,draw opacity=1 ][fill={rgb, 255:red, 155; green, 155; blue, 155 }  ,fill opacity=1 ] (17.46,30) .. controls (17.46,28.62) and (18.6,27.5) .. (20,27.5) .. controls (21.4,27.5) and (22.54,28.62) .. (22.54,30) .. controls (22.54,31.38) and (21.4,32.5) .. (20,32.5) .. controls (18.6,32.5) and (17.46,31.38) .. (17.46,30) -- cycle ;
\draw  [color={rgb, 255:red, 155; green, 155; blue, 155 }  ,draw opacity=1 ][fill={rgb, 255:red, 155; green, 155; blue, 155 }  ,fill opacity=1 ] (17.46,50) .. controls (17.46,48.62) and (18.6,47.5) .. (20,47.5) .. controls (21.4,47.5) and (22.54,48.62) .. (22.54,50) .. controls (22.54,51.38) and (21.4,52.5) .. (20,52.5) .. controls (18.6,52.5) and (17.46,51.38) .. (17.46,50) -- cycle ;
\draw  [color={rgb, 255:red, 155; green, 155; blue, 155 }  ,draw opacity=1 ][fill={rgb, 255:red, 155; green, 155; blue, 155 }  ,fill opacity=1 ] (17.46,70) .. controls (17.46,68.62) and (18.6,67.5) .. (20,67.5) .. controls (21.4,67.5) and (22.54,68.62) .. (22.54,70) .. controls (22.54,71.38) and (21.4,72.5) .. (20,72.5) .. controls (18.6,72.5) and (17.46,71.38) .. (17.46,70) -- cycle ;
\draw  [color={rgb, 255:red, 155; green, 155; blue, 155 }  ,draw opacity=1 ][fill={rgb, 255:red, 155; green, 155; blue, 155 }  ,fill opacity=1 ] (297.46,30) .. controls (297.46,28.62) and (298.6,27.5) .. (300,27.5) .. controls (301.4,27.5) and (302.54,28.62) .. (302.54,30) .. controls (302.54,31.38) and (301.4,32.5) .. (300,32.5) .. controls (298.6,32.5) and (297.46,31.38) .. (297.46,30) -- cycle ;
\draw  [color={rgb, 255:red, 155; green, 155; blue, 155 }  ,draw opacity=1 ][fill={rgb, 255:red, 155; green, 155; blue, 155 }  ,fill opacity=1 ] (297.46,50) .. controls (297.46,48.62) and (298.6,47.5) .. (300,47.5) .. controls (301.4,47.5) and (302.54,48.62) .. (302.54,50) .. controls (302.54,51.38) and (301.4,52.5) .. (300,52.5) .. controls (298.6,52.5) and (297.46,51.38) .. (297.46,50) -- cycle ;
\draw  [color={rgb, 255:red, 155; green, 155; blue, 155 }  ,draw opacity=1 ][fill={rgb, 255:red, 155; green, 155; blue, 155 }  ,fill opacity=1 ] (297.46,70) .. controls (297.46,68.62) and (298.6,67.5) .. (300,67.5) .. controls (301.4,67.5) and (302.54,68.62) .. (302.54,70) .. controls (302.54,71.38) and (301.4,72.5) .. (300,72.5) .. controls (298.6,72.5) and (297.46,71.38) .. (297.46,70) -- cycle ;
\draw  [color={rgb, 255:red, 155; green, 155; blue, 155 }  ,draw opacity=1 ][dash pattern={on 4.5pt off 4.5pt}] (70,40) -- (210,40) -- (210,110) -- (70,110) -- cycle ;
\draw (120,80) node    {$f$};
\draw (33,97) node  [font=\scriptsize]  {$A$};
\draw (147,97) node  [font=\scriptsize]  {$B$};
\draw (190,80) node    {$g$};
\draw (217,97) node  [font=\scriptsize]  {$C$};
\draw (27,77) node  [font=\scriptsize]  {$R$};
\draw (270,80) node    {$h$};
\draw (293,97) node  [font=\scriptsize]  {$D$};
\draw (27,57) node  [font=\scriptsize]  {$T$};
\draw (27,37) node  [font=\scriptsize]  {$S$};
\end{tikzpicture}\]
\[\begin{tikzpicture}[x=0.75pt,y=0.75pt,yscale=-1,xscale=1]
\draw   (70,60) -- (90,60) -- (90,100) -- (70,100) -- cycle ;
\draw    (100,90) -- (180,90) ;
\draw    (20,90) -- (70,90) ;
\draw    (100,50) .. controls (115,50) and (115.33,69.67) .. (130,70) ;
\draw    (140,70) .. controls (155,69.33) and (155.67,50.33) .. (170,50) ;
\draw    (90,70) -- (100,70) ;
\draw   (180,60) -- (200,60) -- (200,100) -- (180,100) -- cycle ;
\draw    (200,90) -- (250,90) ;
\draw    (170,50) -- (210,50) ;
\draw    (170,70) -- (180,70) ;
\draw    (200,70) -- (210,70) ;
\draw    (30,70) .. controls (45,69.33) and (45.67,50.33) .. (60,50) ;
\draw    (30,30) .. controls (45,30) and (45.33,69.67) .. (60,70) ;
\draw    (60,70) -- (70,70) ;
\draw    (60,50) -- (100,50) ;
\draw    (20,30) -- (30,30) ;
\draw    (20,50) -- (30,50) ;
\draw    (20,70) -- (30,70) ;
\draw    (60,30) -- (100,30) ;
\draw    (210,50) .. controls (226.2,50.6) and (225.67,70.34) .. (240,70) ;
\draw    (210,70) .. controls (225,70) and (225.33,49.67) .. (240,50) ;
\draw    (240,30) -- (290,30) ;
\draw    (240,50) -- (290,50) ;
\draw    (240,70) -- (250,70) ;
\draw   (250,60) -- (270,60) -- (270,100) -- (250,100) -- cycle ;
\draw    (270,70) -- (290,70) ;
\draw    (270,90) -- (290,90) ;
\draw  [color={rgb, 255:red, 155; green, 155; blue, 155 }  ,draw opacity=1 ][fill={rgb, 255:red, 155; green, 155; blue, 155 }  ,fill opacity=1 ] (17.46,30) .. controls (17.46,28.62) and (18.6,27.5) .. (20,27.5) .. controls (21.4,27.5) and (22.54,28.62) .. (22.54,30) .. controls (22.54,31.38) and (21.4,32.5) .. (20,32.5) .. controls (18.6,32.5) and (17.46,31.38) .. (17.46,30) -- cycle ;
\draw  [color={rgb, 255:red, 155; green, 155; blue, 155 }  ,draw opacity=1 ][fill={rgb, 255:red, 155; green, 155; blue, 155 }  ,fill opacity=1 ] (17.46,50) .. controls (17.46,48.62) and (18.6,47.5) .. (20,47.5) .. controls (21.4,47.5) and (22.54,48.62) .. (22.54,50) .. controls (22.54,51.38) and (21.4,52.5) .. (20,52.5) .. controls (18.6,52.5) and (17.46,51.38) .. (17.46,50) -- cycle ;
\draw  [color={rgb, 255:red, 155; green, 155; blue, 155 }  ,draw opacity=1 ][fill={rgb, 255:red, 155; green, 155; blue, 155 }  ,fill opacity=1 ] (17.46,70) .. controls (17.46,68.62) and (18.6,67.5) .. (20,67.5) .. controls (21.4,67.5) and (22.54,68.62) .. (22.54,70) .. controls (22.54,71.38) and (21.4,72.5) .. (20,72.5) .. controls (18.6,72.5) and (17.46,71.38) .. (17.46,70) -- cycle ;
\draw  [color={rgb, 255:red, 155; green, 155; blue, 155 }  ,draw opacity=1 ][fill={rgb, 255:red, 155; green, 155; blue, 155 }  ,fill opacity=1 ] (287.46,30) .. controls (287.46,28.62) and (288.6,27.5) .. (290,27.5) .. controls (291.4,27.5) and (292.54,28.62) .. (292.54,30) .. controls (292.54,31.38) and (291.4,32.5) .. (290,32.5) .. controls (288.6,32.5) and (287.46,31.38) .. (287.46,30) -- cycle ;
\draw  [color={rgb, 255:red, 155; green, 155; blue, 155 }  ,draw opacity=1 ][fill={rgb, 255:red, 155; green, 155; blue, 155 }  ,fill opacity=1 ] (287.46,50) .. controls (287.46,48.62) and (288.6,47.5) .. (290,47.5) .. controls (291.4,47.5) and (292.54,48.62) .. (292.54,50) .. controls (292.54,51.38) and (291.4,52.5) .. (290,52.5) .. controls (288.6,52.5) and (287.46,51.38) .. (287.46,50) -- cycle ;
\draw  [color={rgb, 255:red, 155; green, 155; blue, 155 }  ,draw opacity=1 ][fill={rgb, 255:red, 155; green, 155; blue, 155 }  ,fill opacity=1 ] (287.46,70) .. controls (287.46,68.62) and (288.6,67.5) .. (290,67.5) .. controls (291.4,67.5) and (292.54,68.62) .. (292.54,70) .. controls (292.54,71.38) and (291.4,72.5) .. (290,72.5) .. controls (288.6,72.5) and (287.46,71.38) .. (287.46,70) -- cycle ;
\draw  [color={rgb, 255:red, 155; green, 155; blue, 155 }  ,draw opacity=1 ][dash pattern={on 4.5pt off 4.5pt}] (140,40) -- (280,40) -- (280,110) -- (140,110) -- cycle ;
\draw    (30,50) .. controls (45,49.33) and (45.67,30.33) .. (60,30) ;
\draw    (90,90) -- (100,90) ;
\draw    (100,70) .. controls (115,70) and (115.33,29.67) .. (130,30) ;
\draw    (100,30) .. controls (115,30) and (115.33,49.67) .. (130,50) ;
\draw    (140,50) .. controls (155,50) and (155.33,69.67) .. (170,70) ;
\draw    (130,50) -- (140,50) ;
\draw    (130,70) -- (140,70) ;
\draw    (130,30) -- (240,30) ;
\draw (80,80) node    {$f$};
\draw (33,97) node  [font=\scriptsize]  {$A$};
\draw (147,97) node  [font=\scriptsize]  {$B$};
\draw (190,80) node    {$g$};
\draw (213,97) node  [font=\scriptsize]  {$C$};
\draw (27,77) node  [font=\scriptsize]  {$R$};
\draw (260,80) node    {$h$};
\draw (287,97) node  [font=\scriptsize]  {$D$};
\draw (27,57) node  [font=\scriptsize]  {$T$};
\draw (27,37) node  [font=\scriptsize]  {$S$};
\end{tikzpicture}\]
\caption{Associativity of sequential composition.}
\label{fig:associativityproof}
\end{figure}

The state spaces are isomorphic thanks to the associator $\alpha \colon (\sS \tensor \sT) \tensor \sR \to \sS \tensor (\sT \tensor \sR)$.
\end{proof}

Unitality and monoidality of stateful processes follow a similar reasoning.
These properties yield the following result.

\begin{theorem}[\cite{katis02}, Proposition 2.6]\label{th:storeisfree}
  The category \(\St(\catC)\), endowed with the \(\store{}(\bullet)\) operator, is the free feedback category over a \hyperlink{linksymmetricmonoidalcategory}{symmetric monoidal category} \(\catC\).
\end{theorem}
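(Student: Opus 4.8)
The plan is to establish the universal property that makes \(\St(\catC)\) the value of a left adjoint to the forgetful functor \(\mathcal{U} \colon \FEEDBACK \to \SYMMON\). Concretely, I would exhibit a strong symmetric monoidal functor \(\iota \colon \catC \to \mathcal{U}(\St(\catC))\) and show that for every feedback category \(\catD\) and every symmetric monoidal functor \(F \colon \catC \to \mathcal{U}(\catD)\) there is a \emph{unique} feedback functor \(\widehat{F} \colon \St(\catC) \to \catD\) with \(\mathcal{U}(\widehat{F}) \comp \iota = F\); this is exactly the statement \(\St \dashv \mathcal{U}\). The embedding \(\iota\) is the identity on objects and sends \(f \colon \sA \to \sB\) to the stateless process \((\sI \mid f)\); since sequential and parallel composition collapse when the state space is \(\sI\), one checks directly that \(\iota\) is strong symmetric monoidal. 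The structural observation that drives the whole argument is that every morphism factors through \(\iota\) and the feedback operator: for \((\sS \mid f) \colon \sA \to \sB\) we have \((\sS \mid f) = \store{\sS}(\iota(f))\), where \(f\) is regarded as a morphism \(\sS \tensor \sA \to \sS \tensor \sB\) of \(\catC\).

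Given \(F\), I would set \(\widehat{F}\) to be \(F\) on objects and, on a morphism \((\sS \mid f) \colon \sA \to \sB\), to be \(\widehat{F}(\sS \mid f) \coloneqq \fbkOn{\catD}{F\sS}{F\sA}{F\sB}(\mu \comp Ff \comp \mu^{-1})\), where \(\mu\) is the coherence isomorphism of \(F\). The first point to verify is well-definedness: morphisms of \(\St(\catC)\) are equivalence classes under isomorphism of the state space, so if \(h \colon \sS \cong \sT\) then \((\sS \mid f)\) and \((\sT \mid (h^{-1} \tensor \im) \comp f \comp (h \tensor \im))\) must land on the same morphism of \(\catD\). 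Because \(F\) preserves isomorphisms, this is precisely an instance of the sliding axiom \ref{axiom:slide} in \(\catD\) applied to \(Fh\) (after transporting through \(\mu\)).

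Next I would check that \(\widehat{F}\) is a feedback functor. Preservation of identities uses the vanishing axiom \ref{axiom:vanish}: \(\widehat{F}(\sI \mid \im) = \fbkOn{\catD}{I}{}{}(\im) = \im\). Preservation of composition is the laborious point: unfolding the definition of sequential composition in \(\St(\catC)\) and pushing \(F\) inside leaves the two nested state spaces \(F\sS\) and \(F\sT\) to be fed back, and I would reconcile this with \(\widehat{F}(\sS \mid f) \comp \widehat{F}(\sT \mid g)\) using the joining axiom \ref{axiom:join} to merge the two feedback loops and tightening \ref{axiom:tight} to slide the intervening symmetries and the copies of \(f, g\) into position. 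Monoidality is handled analogously, using strength \ref{axiom:strength} to pull a parallel process out of a single feedback loop and joining \ref{axiom:join} to split the combined state. Preservation of feedback itself, \(\widehat{F}(\store{\sT}(\sS \mid f)) = \fbkOn{\catD}{F\sT}{}{}(\widehat{F}(\sS \mid f))\), is again the joining axiom \ref{axiom:join}, since \(\store{\sT}(\sS \mid f) = (\sS \tensor \sT \mid f)\) and \(F(\sS \tensor \sT) \cong F\sS \tensor F\sT\).

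Uniqueness is then immediate from the factorization: any feedback functor \(G\) with \(\mathcal{U}(G) \comp \iota = F\) must satisfy \(G(\sS \mid f) = G(\store{\sS}(\iota(f))) = \fbkOn{\catD}{F\sS}{}{}(G(\iota(f)))\), and \(G(\iota(f)) = Ff\) (up to \(\mu\)) by hypothesis, so \(G\) agrees with \(\widehat{F}\) on every morphism. The hard part will be the compositionality check: the bookkeeping of symmetries built into sequential composition must be matched against the interaction of the feedback operator with monoidal composition in \(\catD\), and aligning the associativity and naturality coherences so that joining \ref{axiom:join} and tightening \ref{axiom:tight} apply cleanly is where the real work lies. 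I expect this to be far more transparent drawn as string diagrams than carried out with the pointful formulas, exactly as in the associativity proof preceding the theorem.
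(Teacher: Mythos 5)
Your proposal is correct, and it is essentially the argument of the source the paper cites: the paper itself gives no proof of this theorem---it only verifies part of the symmetric monoidal structure of \(\St(\catC)\) (associativity of composition, by string diagrams, with unitality and monoidality asserted to follow similarly) and then invokes \cite{katis02}, Proposition 2.6---and your adjunction-style proof, with the embedding \(\iota(f) = (\sI \mid f)\), the factorization \((\sS \mid f) = \store{\sS}(\iota(f))\), the extension \(\widehat{F}(\sS \mid f) = \fbkOn{\catD}{F\sS}{}{}(\mu \comp Ff \comp \inverse{\mu})\), well-definedness via sliding~\ref{axiom:slide} applied to \(Fh\), and uniqueness read off from the factorization, is exactly the Katis--Sabadini--Walters argument. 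Two cosmetic points only: preservation of composition also needs strength~\ref{axiom:strength}, not just tightening~\ref{axiom:tight} and joining~\ref{axiom:join} (strength is what lets you tensor one feedback loop with an identity so that it can be nested inside the other before joining merges the state spaces), and the identity/vanishing check first slides along the coherence isomorphism \(\varphi \colon J \cong F\sI\) before~\ref{axiom:vanish} applies---both routine given how carefully you handle coherence elsewhere.
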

\begin{remark}\label{rem:sliding-isos}
  Stateful processes are defined \emph{up to isomorphism of the state space}.
  This is captured by axiom~\ref{axiom:slide} of feedback categories and, as mentioned in~\Cref{section:trace}, relaxing it to allow sliding of arbitrary morphisms, would yield a notion of equality of stateful processes that would be too strong for our purposes: it would equate automata with a different number of states and boundary behavior (\Cref{ex:non-equivalent-automata}).
  Considering stronger notions of equivalence of processes is possible and leads to interesting models of computation~\cite{monoidalStreams22}.
  Expanding this line of research is outside the scope of the present manuscript.
\end{remark}

\begin{remark}[Coherence and sliding]
There are cases where we do need to be careful about the correct use of associators and unitors.
For instance, we could be tempted to conclude that coherence implies that, for any $f \colon ((S \tensor T) \tensor R) \tensor A \to ((S \tensor T) \tensor R) \tensor B$, the following equation holds
$((S \tensor T) \tensor R \mid f) = (S \tensor (T \tensor R) \mid f)$ without needing to invoke the equivalence relation of stateful processes.
This would allow us to construct the category $\St(\bullet)$ of \statefulProcesses{} without having to quotient them by the equivalence relation.
However, this equality is only enabled by the fact that $\alpha_{S,T,R}$ is an isomorphism: we have
\[((S \tensor T) \tensor R \mid f) =
  (S \tensor (T \tensor R) \mid \alpha_{S,R,T} \comp f \comp \alpha_{S,R,T}^{\tiny -\!1}),\]
even if we write the equation omitting the coherence maps.  This is also what will allow us to notate \statefulProcesses{}
diagramatically. We will mark the wires forming the state space; the order in which they are tensored
does not matter thanks again to the equivalence relation that we are imposing.
\end{remark}

\subsection{Examples}\label{section:examplesfeedback}

All \emph{traced monoidal categories} are feedback categories, since the axioms of feedback are a strict weakening of the axioms of trace.
A more interesting source of examples is the $\St(\bullet)$ construction we just defined.
We present some examples of state constructions below.

\begin{example}[Mealy transition systems]\label{ex:feedback-mealy}
  A \defining{linkmealytransitionsystem}{Mealy deterministic transition system} with boundaries \(\sA\) and \(\sB\), and state space \(S\) was defined~\cite[\S 2.1]{mealy1955automata} to be just a function \(\fm \colon \sS \times \sA \to \sS \times \sB\).
  It is not difficult to see that, up to isomorphism of the state space, they are morphisms of $\St(\Set)$.
  They compose following \Cref{def:sequentialcomposition}, and form a feedback category $\mathbf{Mealy} \coloneqq \St(\Set)$.

  \begin{definition}
    A \emph{Mealy transition system} from \(\sA\) to \(\sB\) is a tuple $\mathbf{M} = (\sS, t, o)$, where $\sS$ is a set called the \emph{state space}, $t \colon \sS \times \sA \to \sS$ is a function called the \emph{transition function}, and $o \colon \sS \times \sA \to \sB$ is a function called the \emph{output function}.

    Two Mealy transition systems are equal whenever their transition functions are equal up to isomorphism of the state space. That is, two deterministic transition
    systems $\mathbf{M} = (\sS, t_M, o_M)$ and $\mathbf{N} = (\sT, t_N, o_N)$
    are considered equal whenever there exists an isomorphism \(h \colon S \cong T\) between their state spaces such that
    \[h(t_M(s,a)) = t_N(h(s),a) \quad \mbox{and} \quad o_M(s,a) = o_N(s,a).\]
    
    Whenever $t(s_0,a) = s_1$ and $o(s_0,a) = b$, we write $s_0 \overset{a/b}\rightarrow s_1$.
    We may also write a transition and output in a single function, $f(s_0,a) = (t(s_0,a),o(s_0,a)) = (s_1,b)$.
  \end{definition}

  The feedback of $\mathbf{Mealy}$ transition systems transforms input/output pairs into states.
  \Cref{fig:mealys} is an example: a transition system with a single state becomes a transition system with two states, $\{s_1,s_0\}$. We compute this feedback by transforming each transition  $(s_{i},i/s_{o})$ into a transition $(i/)$ from $s_{i}$ to $s_{o}$.
\end{example}

\begin{figure}[H]
    \centering
    $\fbkOn{}{}{}{}\left(\mealyA\right) = \left(\mealyB\right)$
    \caption{Feedback of a Mealy transition system. Every transition has a label $i/o$ indicating inputs ($i$) and outputs ($o$).}
    \label{fig:mealys}
  \end{figure}

\begin{example}[Elgot automata]
  Similarly, when we consider $\Set$ with the monoidal structure given by the disjoint union, we recover \emph{Elgot automata}~\cite{elgot75}, which are given by a transition function $S + A \to S + B$.
  These transition systems motivate the work of Katis, Sabadini and Walters in~\cite{sabadini95,katis02}.

  \begin{definition}
    An \emph{Elgot transition system} with initial states in \(\sA\) and final states in \(\sB\) is a tuple $\mathbf{E} = (\sS, p, d)$ where $\sS$ is a set called the \emph{state space}, $p \colon \sA \to \sS + \sB$ is a function called \emph{initial step} and $d \colon \sS \to \sS + \sB$ is a function called \emph{iterative step}.

    An Elgot transition system is interpreted as follows. We start by providing an initial state $\sA$. We then compute the initial step $p(a)$ which can result either in an internal state $p(a) = s \in S$ or in a final state $p(a) = b \in B$. In the later case, we are done and we return $b \in B$; in the former case, we repeatedly apply the iterative step: $d(p(a)), d(d(p(a))), \dots$ until we reach a final state.
  \end{definition}
\end{example}

\begin{example}[Linear dynamical systems]\label{ex:feedback-dynamical}
  A \emph{linear dynamical system} with inputs in \(\reals^{n}\), outputs in \(\reals^{m}\) and state space \(\reals^{k}\) is given by a number $k$, representing the dimension of the state space, and a matrix over the real numbers~\cite{kalman1969systemtheory}
  \[\left( \begin{matrix} A & B\\ C & D \end{matrix}\right)  \in \Mat(k + m, k + n).\]
  Two linear dynamical systems,
  \[\left( \begin{matrix} A & B\\ C & D \end{matrix}\right)\mbox{ and }\left( \begin{matrix} A' & B'\\ C' & D \end{matrix}\right),\]
  are considered equivalent if there is an invertible matrix \(H \in \Mat(k,k)\) such that
  \(A' = \inverse{H} A H\), \(B' = B H\), and \(C' = \inverse{H} C\).

  Linear dynamical systems are morphisms of a feedback category which coincides with $\St(\Mat)$, the free feedback category over the category of matrices \(\Mat\) as defined in \Cref{definition:matcategory}.
  The feedback operator is defined by
  \[\fbkIsi{l}\left(k, \left(\begin{matrix} A_{1} & A_{2} & B_{1} \\ A_{3} & A_{4} & B_{2} \\ C_{1} & C_{2} & D \end{matrix}\right)\right) = \left(k+l, \left(\begin{matrix} A_{1} & A_{2} & B_{1} \\ A_{3} & A_{4} & B_{2} \\ C_{1} & C_{2} & D \end{matrix}\right)\right),\]
   where  $\left(\begin{smallmatrix} A_{1} & A_{2} & B_{1} \\ A_{3} & A_{4} & B_{2} \\ C_{1} & C_{2} & D \end{smallmatrix}\right) \in \Mat(k+l+m, k+l+n)$.
\end{example}

 \newpage

\section{Span(Graph): an Algebra of Transition Systems}
\label{section:spangraph}

$\SpanGraph$~\cite{katis97} is an algebra of ``open transition systems''. It has
applications in \emph{concurrency theory} and \emph{verification}
\cite{sabadini95,katis97,katis00,sabadini18,gianola17}, and has
been recently applied to biological systems \cite{gianola20a,gianola20b}. Just
as ordinary Petri nets have an underlying (firing) semantics in terms of
transition systems, $\SpanGraph$ is used as a semantic universe for a variant of
open Petri nets, see~\cite{Sobocinski2010,Bruni2011}.

An \emph{open transition system} is a morphism of $\SpanGraph$: a transition graph endowed with two \emph{boundaries} or \emph{communication ports}.
Each transition has an effect on each boundary, and this data is used for synchronization.
This conceptual picture actually describes a subcategory, $\SpanGraphV$, where boundaries are mere sets: the alphabets of synchronization signals.
We shall recall the details of $\SpanGraphV$ and prove that it is universal, our main result:
\begin{center}
  \(\SpanGraphV\) is the free \hyperlink{linkcategorywithfeedback}{feedback category} over
  \(\Span(\Set)\).
\end{center}

\subsection{The Algebra of Spans}\label{sec:algebraofspans}

\begin{definition}\label{def:span}
  A \defining{linkspan}{span} \emph{\cite{benabou67,carboni87}} from $A$ to $B$,
  both objects of a category $\catC$, is a pair of morphisms with a common
  domain,
  \[A \overset{f}\longleftarrow E \overset{g}\longrightarrow B.\]
  The object $E$ is the
  ``head'' of the span, and the morphisms $f \colon E \to A$ and
  $g \colon E \to B$ are the left and right ``legs'', respectively.
\end{definition}

When the category $\catC$ has pullbacks, we can sequentially compose two spans %
$A \gets E \to B$ and $B \gets F \to C$ obtaining
$A \gets E \times_{B} F \to C$. Here, $E \times_{B} F$ is the pullback of $E$
and $F$ along $B$: for instance, in $\Set$,
$E \times_{B} F$ is the subset of $E \times F$ given by pairs that have the same image in $B$.

\begin{remark}[Notation for spans]
  We denote a span $A \overset{f}\gets X \overset{g}\to B$ in \(\catC\) as
  \[\gspan{f(x)}{x \in X}{g(x)} \in \Span(A,B),\]
  where $x \colon U \to X$, for some object \(U\) of \(\catC\), can be thought of as some generalized element that we compose with the two legs: e.g. in the category of sets, when $U = 1$, elements of a set $X$ can be seen as functions $x \colon 1 \to X$. Sometimes, these generalized elements will come
  with conditions that must be listed with the morphism set. For instance, in
  \Cref{fig:compositionofspans}, a composition of spans has a pullback as its
  head, so any generalized element of its head is now a pair of morphisms
  $x \colon U \to X$ and $y \colon U \to Y$ satisfying the extra condition \(g(x) = h(y)\):
\[\gspan{f(x)}{x}{g(x)} \comp \gspan{h(y)}{y}{k(y)} = \gspan{f(x)}{x,y}{k(y)}^{g(x) = h(y)}.\]

\begin{figure}[H]
  \centering
  \begin{tikzcd}[column sep=small, row sep=small]
    && X \times_B Y \drar{\pi_Y}\dlar[swap]{\pi_X} && \\
    & X \drar{g}\dlar[swap]{f} && Y \drar{k}\dlar[swap]{h} & \\
    A && B && C
  \end{tikzcd}
  \caption{Composition of spans.}
  \label{fig:compositionofspans}
\end{figure}

In other words, we are saying that the set of generalized elements of the head of the span is $\{ x,y \mid g(x) = h(y) \}$.
The advantage of this notation is that we can reason in any category with finite limits as we do in the category of sets: using \emph{elements}.
Whenever two sets of generalized elements of the head of a span are isomorphic, the Yoneda lemma~\cite{maclane78} provides an isomorphism between the heads.
That isomorphism makes the two spans equivalent when it commutes with the two legs.
\end{remark}

\begin{definition}
  Let $\catC$ be a category with pullbacks.
  \defining{linkspan}{$\mathbf{Span}(\catC)$} is the category that has the same
  objects as $\catC$ and isomorphism classes of spans between them as morphisms.
  That is, two spans are considered \emph{equal} if there is an isomorphism
  between their heads that commutes with both legs. Dually, if $\catC$ is a
  category with pushouts, \defining{linkcospan}{$\mathbf{Cospan}(\catC)$} is the
  category $\Span(\catC^{op})$.
\end{definition}

$\Span(\catC)$ is a \symmetricMonoidalCategory{} when $\catC$ has products.
The parallel composition of
$\gspan{f_{1}(x)}{x \in X}{g_{1}(x)} \in \Span(A_{1}, B_{1})$ and
$\gspan{f_{2}(y)}{y \in Y}{g_{2}(y)} \in \Span(A_{2}, B_{2})$ is given by the componentwise product
\[\gspan{(f_{1}(x),f_{2}(y))}{x \in X, y \in Y}{(g_{1}(x), g_{2}(y))} \in \Span(A_{1} \times A_{2}, B_{1} \times B_{2}).\]
An example is again $\Span(\Set)$.

\begin{remark}[Variable change]
  We will be considering spans ``up to isomorphism of their head''. This means
  that, given any isomorphism $\phi \colon X \to Y$, the following two spans are
  considered equal
  \[\gspan{f(\phi(x))}{x \in X}{g(\phi(x))} = \gspan{f(y)}{y \in Y}{g(y)}.\]
  Moreover, if two spans are equal, then such a variable change does necessarily exist.
\end{remark}

\begin{example}\label{ex:nor-latch-intro}
Let us now detail some useful constants of the algebra of $\Span(\catC)$,
which we will use to construct the NOR
latch circuit from \Cref{fig:norwithfeedback}.

  The Frobenius algebra \cite{carboni87} ($\blackComonoid$, $\blackMonoid$, $\blackMonoidUnit$,
  $\blackComonoidUnit$) is used for the ``wiring''. The following spans are constructed out of
  diagonals $A \to A \times A$ and units $A \to 1$.
  \begin{gather*}
   (\cpy)_{A} = \gspan{a}{a \in A}{(a,a)} \in \Span(A,A\times A) \qquad
   (\discard)_{A} = \gspan{a}{a}{\ast} \in \Span(A,1)\\
   (\cocpy)_{A} = \gspan{(a,a)}{a \in A}{a} \in \Span(A \times A,A)  \qquad
   (\codiscard)_{A} = \gspan{\ast}{a}{a} \in \Span(1,A)
 \end{gather*}
 These induce a compact closed structure (and thus a trace),
 as follows:
 \[\begin{aligned}
   (\cup)_{A} = \gspan{\ast}{a \in A}{(a,a)} \in \Span(1,A \times A) \\
   (\cap)_{A} =  \gspan{(a,a)}{a \in A}{\ast} \in \Span(A \times A,1).
 \end{aligned}\]
Finally, we have a braiding making the category symmetric,
\[(\swappingDiagram) = \gspan{(a,b)}{a \in A,b \in B}{(b,a)} \in \Span(A \times B, B \times A).\]
\end{example}

In general, any function $f \colon A \to B$ can be lifted  to a span $\gspan{a}{a \in A}{f(a)} \in \Span(A, B)$ covariantly, and to a span $\gspan{f(a)}{a \in A}{a} \in \Span(B, A)$, contravariantly.

\subsection{The Algebra of Open Transition Systems}\label{sec:algebratransition}
\begin{definition}\label{def:cat-graph}
  The category \defining{linkgraph}{$\mathbf{Graph}$} has graphs \(G=(s,t \colon E \rightrightarrows V)\) as objects, i.e.
  pairs of morphisms from \emph{edges} to \emph{vertices} returning the \emph{source} and \emph{target} of each edge.
  A morphism of graphs \((e,v) \colon G \to G'\) is given by functions \(e \colon E \to E'\) and \(v \colon V \to V'\) such that \(e \comp s' = s \comp v\) and \(e \comp t' = t \comp v\) (see \Cref{figure:morphismofgraphs})\footnote{Equivalently, $\Graph$ is the presheaf category on the diagram $(\bullet \rightrightarrows \bullet)$, i.e. the category of functors \((\bullet \rightrightarrows \bullet) \to \Set\) and natural transformations between them.}.
  \begin{figure}[H]
    \centering
    \begin{tikzcd}[column sep=large]
      E \rar{e} \dar[bend left]{t} \dar[bend right,swap]{s} & E' \dar[bend left]{t'} \dar[bend right,swap]{s'}\\
      V \rar{v} & V'
    \end{tikzcd}
    \caption{Morphism of graphs.}\label{figure:morphismofgraphs}
  \end{figure}
\end{definition}

We now focus on $\SpanGraphV$, those spans of graphs that have single vertex graphs $(A \rightrightarrows 1)$ as the boundaries.

\begin{definition}\label{def:open-transition-system}
  An \defining{linkopentransitionsystem}{open transition system} is a morphism of $\SpanGraphV$: a span of sets $\gspan{f(e)}{e \in E}{g(e)} \in \Span(A, B)$ where the head is the set of edges of a graph $s, t \colon E \rightrightarrows V$, i.e. the transitions (see \Cref{fig:morphismspangraph}). Two open transition systems are considered \emph{equal} if there is an isomorphism between their graphs that commutes with the legs.
  Open transition systems whose graph $E \rightrightarrows 1$ has a single vertex are called \emph{stateless}.
\begin{figure}[H]
  \centering
      \begin{tikzcd}
        A \dar[bend left]{} \dar[bend right,swap]{} &
        E \dar[bend left]{t} \dar[bend right,swap]{s} \lar[swap]{f} \rar{g} &
        B \dar[bend left] \dar[bend right,swap] \\
        1 & V \rar \lar & 1
      \end{tikzcd}
  \caption{A morphism of $\protect\SpanGraphV$.}\label{fig:morphismspangraph}
\end{figure}
\end{definition}
Sequential composition (the \emph{communicating-parallel operation} of~\cite{katis97}) of two open transition systems with spans
\[\gspan{f(e)}{e \in E}{g(e)} \in \Span(A, B)\mbox{ and }\gspan{h(e')}{e' \in E'}{k(e')} \in \Span(B, C)\]
and graphs $s,t \colon E \rightrightarrows S$ and $s', t' \colon E' \rightrightarrows S'$ yields the open transition system with the composite span
\[\gspan{f(e)}{(e,e') \in E \times E'}{k(e')}^{g(e)=h(e')} \in \Span(A, C)\]
and graph $(s \times s', t \times t') \colon E \times_{B} E' \rightrightarrows S \times S'$. This means that the only allowed transitions are those that synchronize $E$ and $E'$ on the common boundary $B$.

Parallel composition (the \emph{non communicating-parallel operation} of %
\cite{katis97}) of two open transition systems with spans
\[\gspan{f(e)}{e \in E}{g(e)} \in \Span(A, B)\mbox{ and }\gspan{f'(e')}{e' \in E'}{g'(e')} \in \Span(A', B')\]
and graphs $s,t \colon E \rightrightarrows V$ and
$s',t' \colon E' \rightrightarrows V'$ yields the open transition system with span
\[\gspan{(f(e),f'(e'))}{e,e' \in E \times E'}{(g(e), g(e'))} \in \Span(A \times A', B \times B')\]
and graph $(s \times s', t \times t') \colon E \times E' \rightrightarrows V \times V'$.

\label{sec:componentsspangraph}

\begin{remark}[Components of $\SpanGraphV$]
Any span in $\Span(A, B)$ can be lifted to $\SpanGraphV(A,B)$ by making the head represent the graph $E \rightrightarrows 1$. Apart from the components lifted from $\Span{}$, which we call \emph{stateless}, we will need to add a single stateful component to model all of $\SpanGraphV$: the delay in \Cref{figure:delaymorphism}.

\begin{figure}[H]
  \centering
  \examplesDelay
  \caption{Delay morphism over the set $\Bool \coloneqq \{0,1\}$.}
  \label{figure:delaymorphism}
\end{figure}

The delay $(\delayBox_{A})$ on a given set $A$ is given by the span $\gspan{a_{2}}{a_{1}, a_{2} \in A \times A}{a_{1}} \in \Span(A,A)$ together with the graph $\pi_{1},\pi_{2} \colon A \times A \to A$. This is to say that the delay receives on the left what the target of its transition (its next state) will be, while signalling on the right what the source of its transition (its current state) is.
This is \emph{not} an arbitrary choice: it is defined as the canonical delay obtained from
the feedback structure in $\SpanGraphV$ (as in \Cref{section:preliminaries},
$\delay_{A} = \mathsf{fbk}(\sigma_{A,A})$). %

\[(\delayBox)_{A} = \fbk(\gspan{(a_{1},a_{2})}{a_{1},a_{2}}{(a_{2},a_{1})}).\]

\end{remark}
We can use this delay to correctly model a stateful NOR latch from the function $\NOR \colon \Bool \times \Bool \to \Bool$ (as we saw in \Cref{fig:norandlatch}). %

\begin{figure}[H]
  \centering
  \norGatesSplit
  \caption{Decomposing the circuit.}
  \label{fig:nor-latch-decomposed}
\end{figure}

The NOR latch circuit of \Cref{fig:norwithfeedback} is the composition of two NOR gates where the outputs of each gate have been copied and fed back as input to the other gate.
The algebraic expression, in \(\SpanGraphV\), of this circuit is obtained by decomposing it into its components, as in \Cref{fig:nor-latch-decomposed}.
\begin{multline*}
  (\im \tensor \cup \tensor \cup \tensor \im) \comp
  (\NOR \tensor \sigma \tensor \NOR) \comp
  (\cpy \tensor \im \tensor \cpy)\\ \comp
  (\im \tensor \delay \tensor \im \tensor \delay \tensor \im) \comp
  (\im \tensor \cap \tensor \cap \tensor \im)
\end{multline*}

  The graph obtained from computing this expression, together with its
  transitions, is shown in \Cref{fig:graph-latch}. This time, our model is
  indeed stateful. It has four states: two states representing a correctly stored signal,
  $\LatchNotA = (1,0)$ and $\LatchA = (0,1)$; and two states representing
  transitory configurations $\LatchTOne = (0,0)$ and $\LatchTTwo = (1,1)$.
  \begin{figure}[H]
    \centering
    \graphLatch
    \caption{Span of graphs representing the NOR latch}
    \label{fig:graph-latch}
  \end{figure}

  The \emph{left boundary} can receive a \emph{set}
  signal, $\LatchSet = \binom{1}{0}$; a \emph{reset} signal,
  $\LatchReset = \binom{0}{1}$; none of the two, $\LatchIdle = \binom{0}{0}$; or
  both of them at the same time, $\LatchUnspec = \binom{1}{1}$, which is known
  to cause unspecified behavior in a NOR latch.
  The signal on the \emph{right boundary}, on the other hand, is always equal to
  the state the transition goes to and does not provide any additional
  information: we omit it from \Cref{fig:graph-latch}.

  \begin{figure}[H]
  \centering
  \[\fbkOn{}{\Bool \times \Bool}{}{} \left(\latchNoFeedback \right) \neq
  \tr_{\Bool \times \Bool} \left(\latchNoFeedback \right)\]
  \caption{Applying $\fbkOn{}{}{}{}(\bullet)$ over the circuit gives the NOR latch.}\label{fig:nor-latch-no-feedback}
  \end{figure}
  Activating the signal $\LatchSet$ %
makes the latch reach the state $\LatchA$ %
in (at most) two transition steps. Activating $\LatchReset$ %
does the same for $\LatchNotA$. After any of these two cases, deactivating all signals, $\LatchIdle$, keeps
the last state.

  Moreover, the (real-world) NOR latch has some unspecified behavior that gets also reflected in the graph: activating both $\LatchSet$ and $\LatchReset$ at the same time, what we call $\LatchUnspec$, causes the circuit to enter an unstable state where it bounces between the states $\LatchTOne$ and $\LatchTTwo$ after an $\LatchIdle$ signal. Our modeling has reflected this ``unspecified behavior'' as expected.

  \subsubsection{Feedback and trace.}
  In terms of feedback, the circuit of \Cref{fig:graph-latch} is equivalently
  obtained as the result of taking feedback over the stateless
  morphism in \Cref{fig:nor-latch-no-feedback}.

  But $\SpanGraphV$ is also canonically traced: it is actually compact closed.
  What changes in the modeling if %
  we would have used the
  trace instead? As we argued for \Cref{fig:nor-latch-trace}, we obtain
  a stateless transition system. %
  The valid transitions are
  \[\{
    (\LatchUnspec,\LatchTOne),
    (\LatchIdle,\LatchA),
    (\LatchIdle,\LatchNotA),
    (\LatchSet,\LatchA),
    (\LatchReset,\LatchNotA)
    \}.\]
  They encode important information: they are the \emph{equilibrium} states of
  the circuit. However, unlike the previous graph, this one would not get us the correct  allowed transitions: under this modeling, our circuit could freely bounce between
  $(\LatchIdle,\LatchA)$ and $(\LatchIdle,\LatchNotA)$, which is not the
  expected behavior of a NOR latch.

  The fundamental piece making our modeling succeed the previous time was
  feedback with \emph{delay}. Next we show that this feedback is canonical.

\subsection{Span(Graph) as a Feedback Category}
This section presents our main theorem. We introduce a mapping
that associates to each stateful span of sets a corresponding span of graphs.
This mapping is well-defined and lifts to a functor
\(\St(\Span(\Set)) \to \Span(\Graph)\). Finally, we prove that it is an
isomorphism \(\St(\Span(\Set)) \cong \SpanGraphV\).

First of all, we need to be able to explicitly compute the composition of stateful spans, following the composition of stateful morphisms in \Cref{def:sequentialcomposition}. This is \Cref{lemma:composingstatefulspans}. Then, we will characterize isomorphisms in the category of spans in \Cref{prop:isospans}.

\begin{proposition}\label{lemma:composingstatefulspans}
Let $\catC$ be a category with finite limits.
Consider two stateful spans in the category $\St(\Span(\catC))$,
\begin{align*}
  \gspan{(\sigma(x),f(x))}{x \in X}{(\sigma'(x),g(x))} &\in \Span(S \times A, S \times B),\\
  \gspan{(\tau(y), h(y))}{y \in Y}{(\tau'(y),k(y))} &\in \Span(T \times B, T \times C).
\end{align*}
their composition is then given by the span
\[\gspan{(\sigma(x), \tau(y), f(x))}{(x,y) \in X \times Y}{(\sigma'(x), \tau'(y), k(y))}^{g(x) = h(y)} \in \Span(S \times T \times A, S \times T \times B),\]
where the head is $X \times_{B} Y$, the pullback of $g$ and $h$.
\end{proposition}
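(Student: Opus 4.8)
The plan is to unfold the definition of sequential composition in $\St(\Span(\catC))$ (\Cref{def:sequentialcomposition}) and evaluate the resulting composite directly in $\Span(\catC)$ using the generalized-element calculus for spans. Writing the two given stateful spans as $(S \mid \fm)$ and $(T \mid \gm)$, their composite has state space $S \otimes T$ and underlying span $(\sigma \otimes \im) \comp (\im \otimes \fm) \comp (\sigma \otimes \im) \comp (\im \otimes \gm)$, so the claim about the state space is immediate and it remains only to compute this four-fold composite of spans.

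First I would fix the element-wise rules I need: the swap $\sigma_{U,V}$ is $\gspan{(u,v)}{(u,v)}{(v,u)}$, the tensor of two spans pairs their legs componentwise, and the sequential composite $\gspan{l(x)}{x}{m(x)} \comp \gspan{m'(z)}{z}{r(z)}$ equals $\gspan{l(x)}{(x,z)}{r(z)}^{m(x)=m'(z)}$, whose head is the pullback over the shared boundary. In this notation the four factors are, in order, $\gspan{(s,t,a)}{(s,t,a)}{(t,s,a)}$; then $\gspan{(t,\sigma(x),f(x))}{(t,x)}{(t,\sigma'(x),g(x))}$; then $\gspan{(t,s,b)}{(t,s,b)}{(s,t,b)}$; and finally $\gspan{(s,\tau(y),h(y))}{(s,y)}{(s,\tau'(y),k(y))}$.

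I would then compose left to right. Composing the first two factors forces $s = \sigma(x)$ and $a = f(x)$ and yields $\gspan{(\sigma(x),t,f(x))}{(t,x)}{(t,\sigma'(x),g(x))}$; composing with the second swap merely restores the state order, giving $\gspan{(\sigma(x),t,f(x))}{(t,x)}{(\sigma'(x),t,g(x))}$; and composing with the final factor forces $t = \tau(y)$ together with the boundary constraint $g(x) = h(y)$. The surviving span is exactly $\gspan{(\sigma(x),\tau(y),f(x))}{(x,y)}{(\sigma'(x),\tau'(y),k(y))}^{g(x)=h(y)}$, whose head is the pullback $X \times_B Y$, as claimed.

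The computation is mechanical once the notation is fixed, so I do not anticipate a genuine obstacle; the points demanding care are bookkeeping the two swaps so that the state components $S$ and $T$ are routed into $\fm$ and $\gm$ respectively and then returned to the order $S \otimes T$, and verifying that the two separate composition constraints collapse to the single pullback condition $g(x) = h(y)$ rather than a larger fibered product. Finally, I would invoke the Yoneda-style reasoning recorded after \Cref{def:span}: since the displayed sets of generalized elements of the heads agree and the bijection commutes with both legs, this is genuinely an equality of morphisms in $\Span(\catC)$, and hence, the state spaces agreeing, an equality in $\St(\Span(\catC))$.
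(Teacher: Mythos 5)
Your proposal is correct and follows essentially the same route as the paper's own proof: unfold the definition of sequential composition in $\St(\Span(\catC))$, compute the tensors to get the same four spans of generalized elements, and then evaluate the composites element-wise until the single pullback condition $g(x)=h(y)$ remains. The only difference is bookkeeping --- you compose strictly left to right while the paper first absorbs each swap into its neighbouring factor --- and your closing appeal to the Yoneda-style reasoning merely makes explicit what the paper's span notation already encodes.
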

\begin{proof}
  Using the notation for spans, we apply the definition of sequential composition in a category of stateful processes
  (\Cref{def:sequentialcomposition}).
  \begin{align*}
    & (\gspan{(s,t)}{s,t}{(t,s)} \otimes \gspan{a}{a}{a}) \comp
     (\gspan{t}{t}{t} \otimes \gspan{(\sigma(x), f(x))}{x}{(\sigma'(x), g(x))}) \\
    & \qquad \comp (\gspan{(t,s)}{s,t}{(s,t)} \otimes \gspan{b}{b}{b}) \comp
      (\gspan{s}{s}{s} \otimes \gspan{(\tau(y), h(y))}{y}{(\tau'(y), k(y))}) \\
    = & \quad \emph{(Computing tensors)} \\
    & \gspan{(s,t,a)}{s,t,a}{(t,s,a)} \comp \gspan{(t,\sigma(x), f(x))}{t,x}{(t,\sigma'(x), g(x))} \\
    & \qquad \comp \gspan{(t,s,b)}{s,t,b}{(s,t,b)} \comp \gspan{(s, \tau(y), h(y))}{s,y}{(s,\tau'(y), k(y))} \\
    = & \quad \emph{(Computing compositions)} \\
    & \gspan{(\sigma(x), t, f(x))}{t,x}{(t, \sigma'(x), g(x))} \comp \gspan{(\tau(y), s, h(y))}{s,y}{(s, \tau'(y), k(y))} \\
    = & \quad \emph{(Computing compositions)} \\
    & \gspan{(\sigma(x), \tau(y), f(x))}{x,y}{(\sigma'(x), \tau'(y),  k(y))}^{g(x) = h(y)}.
  \end{align*}

  This last formula corresponds indeed to the pullback we stated.
\end{proof}

\begin{proposition}\label{prop:isospans}
  Let $\catC$ be a category with all finite limits.
  An isomorphism $A \cong B$ in its category of spans, $\Span(\catC)$, is always of the form
  \[\gspan{a}{a \in A}{\phi(a)} \in \Span(\catC)(A,B),\]
  where the left leg is an identity and the right leg $\phi \colon A \to B$ is an isomorphism in the base category $\catC$.
\end{proposition}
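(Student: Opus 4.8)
The plan is to avoid any set-theoretic counting and instead derive the result from two elementary facts valid in \emph{any} category with finite limits: (i) the pullback of a split epimorphism is again a split epimorphism, and (ii) a morphism that is simultaneously a split epimorphism and a monomorphism is an isomorphism. Both equations witnessing invertibility of the span will be mined for exactly these ingredients.

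Write the isomorphism as $s = \gspan{f(e)}{e \in E}{g(e)}$ and its inverse as $s^{-1} = \gspan{h(e')}{e' \in E'}{k(e')}$. By the composition formula for spans, $s \comp s^{-1}$ has as head the pullback $P = E \times_{B} E'$, with projections $p \colon P \to E$ and $p' \colon P \to E'$; the equation $s \comp s^{-1} = \im_{A}$ then provides an isomorphism of heads $\theta \colon A \to P$ commuting with the legs, so that $\theta \comp p \comp f = \im_{A}$ and $\theta \comp p' \comp k = \im_{A}$. Symmetrically, $s^{-1} \comp s = \im_{B}$ yields the pullback $Q = E' \times_{A} E$ with projections $q' \colon Q \to E'$ and $q \colon Q \to E$, together with an isomorphism $\eta \colon B \to Q$ satisfying $\eta \comp q' \comp h = \im_{B}$ and $\eta \comp q \comp g = \im_{B}$.

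First I would show that the left leg $f$ is an isomorphism. Since $\theta$ is invertible, $\theta \comp p \comp f = \im_{A}$ gives $p \comp f = \theta^{-1}$, an isomorphism; hence $p$ is a monomorphism (the first factor of a mono is mono) and $f$ is a split epimorphism with section $\theta \comp p$. From the companion equation $\eta \comp q' \comp h = \im_{B}$ the leg $h$ is a split epimorphism, and as $p$ is precisely the pullback of $h$ along $g$, fact (i) makes $p$ a split epimorphism too. A monic split epimorphism is invertible by (ii), so $p$ is an isomorphism and therefore $f = p^{-1} \comp \theta^{-1}$ is an isomorphism. Interchanging the roles of the two composites runs the same argument for $g$: here $k$ is the split epimorphism (from $\theta \comp p' \comp k = \im_{A}$), $q$ is its pullback along $f$, and $\eta \comp q \comp g = \im_{B}$ makes $q \comp g$ an isomorphism, whence $g$ is an isomorphism.

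Finally, with $f$ invertible I would apply the change-of-variable principle for spans, substituting $e = f^{-1}(a)$, to rewrite $s$ as $\gspan{a}{a \in A}{\phi(a)}$ with identity left leg and right leg $\phi \coloneqq f^{-1} \comp g$; this $\phi$ is an isomorphism, being the composite of the isomorphisms $f^{-1}$ and $g$. The only genuine content is the pair of lemmas (i)--(ii), which replace the surjectivity/injectivity reasoning one would use in $\Set$; once they are available the two invertibility equations do everything, and the single point demanding care is the bookkeeping of which pullback projection is the pullback of which split leg.
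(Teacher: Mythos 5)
Your proposal is correct, and it takes a genuinely different route from the paper. The paper argues with generalized elements: from the two equations $s \comp s^{-1} = \mathrm{id}_A$ and $s^{-1} \comp s = \mathrm{id}_B$ it extracts variable changes $(\alpha_X,\alpha_Y) \colon A \to X \times_B Y$ and $(\beta_Y,\beta_X) \colon B \to Y \times_A X$, rewrites both spans so that their left legs are identities, and then recomposes to conclude that the two right legs $g \comp \alpha_X$ and $k \comp \beta_Y$ are mutually inverse (backing this up with a pointer to the more general result of Pavlovi\'c cited in the main theorem's proof). You instead prove directly that \emph{both legs of the span are isomorphisms}, by mining each invertibility equation for a split epi and a mono: $f \comp p$ (in your classical-order notation $p$ then $f$) is an isomorphism, so $p$ is mono; $p$ is the pullback of the split epi $h$ along $g$, so $p$ is split epi by stability of split epis under pullback; hence $p$, and then $f$, are isomorphisms, and symmetrically for $g$ via $q$ and $k$. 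I checked the bookkeeping of which projection pulls back which leg, and it is right. What your approach buys: it needs only pullbacks rather than all finite limits, it avoids element-style reasoning entirely, and--importantly--it makes explicit the step the paper's proof leaves implicit, namely why the component $\alpha_X$ of the variable change (equivalently the pullback projection $\pi_X$) is itself invertible, so that rewriting the span with identity left leg is legitimate; a priori $\alpha_X$ is only a split mono, and your split-epi argument is exactly what closes that gap. In effect you reprove the result the paper attributes to \cite{pavlovic95}. What the paper's route buys is brevity and uniformity with the generalized-element span notation used throughout the section.
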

\begin{proof}
Let $\gspan{f(x)}{x \in X}{g(x)} \in \Span(A,B)$ and $\gspan{h(y)}{y\in Y}{k(y)} \in \Span(B,A)$ be mutual inverses.
This means that
\[\gspan{f(x)}{x \in X}{g(x)} \comp \gspan{h(y)}{y \in Y}{k(y)} = \gspan{f(x)}{x,y}{k(y)}^{g(x) = h(y)} = \gspan{a}{a \in A}{a},\]
\[\gspan{h(y)}{y \in Y}{k(y)} \comp \gspan{f(x)}{x \in X}{g(x)} = \gspan{h(y)}{x,y}{g(x)}^{k(y) = f(x)} = \gspan{b}{b \in B}{b}.\]
In turn, this implies the existence of variable changes $(\alpha_{X},\alpha_{Y}) \colon A \to X \times_{B} Y$ and $(\beta_{Y}, \beta_{X}) \colon B \to Y \times_{A} X$ such that they are the inverses of $(f,k)$ and $(g,h)$ respectively.

We can thus write the spans as having the identity on the left leg.
\[\gspan{f(x)}{x \in X}{g(x)} = \gspan{f(\alpha_{X}(a))}{a \in A}{g(\alpha_{X}(a))} = \gspan{a}{a \in A}{g(\alpha_{X}(a))}.\]
\[\gspan{h(y)}{y \in Y}{k(y)} = \gspan{h(\beta_{Y}(b))}{b \in B}{k(\beta_{Y}(b))} = \gspan{b}{b \in B}{k(\beta_{Y}(b))}.\]
Finally, composing them again, we get that $(g \comp \alpha_{X})$ and $(k \comp \beta_{Y})$ must be mutual inverses, thus isomorphisms.
\end{proof}

We are now ready to prove the main result. The following \Cref{lemma:spanwelldefined} proves that we can translate stateful spans to spans of graphs. The main \Cref{theorem:spangraphfreefeedback} follows from it.

\begin{lemma}\label{proof:lemma-spanwelldefined}\label{lemma:spanwelldefined}
  Let $\catC$ be a category with all finite limits.
  The following assignment of \statefulProcesses{} over $\Span(\catC)$ to morphisms of $\Span(\Graph(\catC))$ is well-defined.
  \[K \left( \sS \;\middle|\;
      \begin{tikzcd}[column sep=tiny]
        & E \dlar[swap]{(s,f)}\drar{(t,g)} & \\
        S \times A & & S \times B \end{tikzcd}
    \right)
    \coloneqq
    \left( \begin{tikzcd}
        A \dar[bend left]{} \dar[bend right,swap]{} &
        E \dar[bend left]{t} \dar[bend right,swap]{s} \lar[swap]{f} \rar{g} &
        B \dar[bend left] \dar[bend right,swap] \\
        1 & S \rar \lar & 1
      \end{tikzcd}\right)\]
\end{lemma}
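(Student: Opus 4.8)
The plan is to show that the assignment $K$ is constant on the equivalence classes that define stateful processes over $\Span(\catC)$ and that its output is a genuine span of graphs. A stateful process $(S \mid f)$ is specified only up to two identifications: replacing the span $f$ by an isomorphic one (the equivalence of $\Span(\catC)$, through an isomorphism of its head) and replacing the state space by an isomorphic one (the equivalence of $\St(\bullet)$ recalled in \Cref{def:sequentialcomposition}). So there are three things to check: that $K(S \mid f)$ is well-formed, that it is invariant under the first identification, and that it is invariant under the second.

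First I would confirm that the output is a well-formed morphism of $\Span(\Graph(\catC))$. Its two legs are the pairs $(f, {!}) \colon (E \rightrightarrows S) \to (A \rightrightarrows 1)$ and $(g, {!}) \colon (E \rightrightarrows S) \to (B \rightrightarrows 1)$, where ${!}$ is the unique map to the terminal object $1$. The commuting square required of a graph morphism (\Cref{figure:morphismofgraphs}) has its bottom-right corner equal to $1$, so both of its composites are maps $E \to 1$ and hence coincide by terminality. Thus the legs are graph homomorphisms with single-vertex boundaries, and $K$ indeed lands in $\SpanGraphV$. For invariance under a change of representative of $f$, an isomorphism $\psi \colon E \cong E'$ in $\catC$ commuting with the legs $(s,f)$ and $(t,g)$ yields the pair $(\psi, \im_S)$, which is an isomorphism of the head graphs $(E \rightrightarrows S) \to (E' \rightrightarrows S)$ commuting with both boundary legs (the condition on vertices being trivial, as only $\im_S$ and ${!}$ are involved). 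Hence $K$ does not depend on the chosen representative of $f$.

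Third, and this is the crux, I verify invariance under the $\St(\bullet)$ equivalence. By \Cref{prop:isospans}, every isomorphism $h \colon S \cong T$ of state spaces in $\Span(\catC)$ is of the form $\gspan{p}{p \in S}{\phi(p)}$ for an isomorphism $\phi \colon S \to T$ of $\catC$, with inverse $\gspan{q}{q \in T}{\phi^{-1}(q)}$. Writing $f = \gspan{(s(x), f(x))}{x \in E}{(t(x), g(x))}$ and computing the transformed span $(h^{-1} \otimes \im) \comp f \comp (h \otimes \im)$ in the span calculus, the two pullbacks are solved by $q = \phi(s(x))$ and $p = t(x)$, so the head stays $E$ and the result is
\[\gspan{(\phi(s(x)), f(x))}{x \in E}{(\phi(t(x)), g(x))} \in \Span(T \times A, T \times B).\]
Applying $K$ to this $(T \mid \cdot)$ yields the span of graphs with head $(E \rightrightarrows T)$, source $\phi \circ s$, target $\phi \circ t$, and unchanged edge legs $f$ and $g$. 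The pair $(\im_E, \phi) \colon (E \rightrightarrows S) \to (E \rightrightarrows T)$ is then an isomorphism of graphs commuting with both boundary legs: on edges the legs are literally the same, and on vertices ${!} \circ \phi = {!}$ by terminality of $1$. This exhibits the two outputs as the same morphism of $\Span(\Graph(\catC))$, so $K$ respects the equivalence of state spaces.

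The main obstacle is this last computation: one must correctly track the effect of the state-space isomorphism through the span composition and recognize that it amounts to post-composing the source and target maps with $\phi$ while leaving the edge object and the two boundary legs untouched. Once that is established, the required graph isomorphism is forced to be $\phi$ on vertices and the identity on edges, and the terminality of the boundary vertex object $1$ makes every remaining compatibility condition automatic.
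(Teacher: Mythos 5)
Your proposal is correct and follows essentially the same route as the paper's proof: both verify invariance under a variable change of the span's head via the graph isomorphism $(\psi,\im_S)$, and invariance under the state-space equivalence by invoking \Cref{prop:isospans} to reduce the state isomorphism to an isomorphism $\phi$ of $\catC$, yielding the graph isomorphism $(\im_E,\phi)$. Your explicit well-formedness check of the legs and the element-wise pullback computation of $(h^{-1}\otimes\im)\comp f\comp(h\otimes\im)$ merely spell out details the paper leaves implicit or conveys diagrammatically.
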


\begin{proof}
  We first check that two \emph{isomorphic} spans are sent to \emph{isomorphic} spans of graphs.
  Let
  \[\begin{aligned}
      \gspan{(s(e),f(e))}{e \in E}{(t(e),g(e))} &\in \Span(\sS \times \sA, \sS \times \sB)\mbox{ and } \\
      \gspan{(s'(e'),f'(e'))}{e' \in E'}{(t'(e'),g'(e'))} &\in \Span(\sS \times \sA, \sS \times \sB)
    \end{aligned}\]
  be two spans that are isomorphic with the variable change $\phi \colon E \cong E'$.
  Then, \((\phi, \mathrm{id})\) is an isomorphism of spans of graphs, also making the relevant diagram commute
  (\Cref{fig:isospansisographs}).

  \begin{figure}[H]
    \centering
    \begin{tikzcd}[row sep=tiny]
      & E \ar{dl}[swap]{(s,f)} \ar{dr}{(t,g)} \ar{dd}{\phi} & \\
      \sS \times \sA &  & \sS \times \sB \\
      & E' \ular{(s',f')} \urar[swap]{(t',g')}&
    \end{tikzcd} \qquad
    \begin{tikzcd}
      A \dar[bend left]{} \dar[bend right,swap]{}
      & E \rar{\phi} \lar[swap]{f} \dar[bend left]{t} \dar[bend right,swap]{s} \ar[bend left]{rr}{g}
      & E' \dar[bend left]{t'} \ar[bend right,swap]{ll}{f'} \dar[bend right,swap]{s'} \rar{g'}
      & B \dar[bend left]{} \dar[bend right,swap]{} \\
      1 & S \rar{\im} \lar{} \ar[bend right,swap]{rr}{}
      & S \rar \ar[bend left,swap]{ll}{}
      & 1
    \end{tikzcd}
    \caption{Isomorphic spans result in isomorphic spans of graphs.}
    \label{fig:isospansisographs}
  \end{figure}

  We show now that the assignment preserves the equivalence relation of \statefulProcesses{}.
  Isomorphisms in a category of spans are precisely spans whose two legs are isomorphisms (by \Cref{prop:isospans}, or the more general result of~\cite{pavlovic95}).
  This means that an isomorphism in \(\Span(\Set)\) can be always rewritten as \(\gspan{s}{s \in \sS}{\phi(s)} \in \Span(\sS, \sT)\), where the left leg is an identity and the right leg is \(\phi \colon \sS \to \sT\), some isomorphism.
  Its inverse can be written analogously as \(\sT \gets \sS \to \sS\).
  In order to prove that the quotient relation induced by the feedback is preserved, we need to check that equivalent spans of sets are sent to isomorphic spans of graphs.
  If two spans are equivalent with the variable change \(\phi \colon S \cong T\), then the corresponding graphs are isomorphic with the isomorphism of graphs \((\im,\phi)\), see \Cref{fig:equivalentspans}.

  \begin{figure}[H]
  \centering
    \begin{tikzcd}[column sep=-1ex,baseline=(B.base)]
      & \cdot \ar[dash]{dr} \ar{dl}[swap]{\phi \times \im} &&
      E \ar{dl}[swap]{(s,a)} \ar{dr}{(t,b)} &&
      \cdot \ar{dr}{\phi \times \im} \ar[dash]{dl} &\\
      \sT \times \sA &&
      \sS \times \sA &&
      \sS \times \sB  &&
      |[alias=B]| \sT \times \sB \\
    \end{tikzcd} \qquad
    \begin{tikzcd}[baseline=(B.base)]
      A \dar[bend left]{} \dar[bend right,swap]{}
      & E \rar{\im}
      \lar[swap]{a}
      \dar[bend left]{t}
      \dar[bend right,swap]{s}
      \ar[bend left]{rr}{b}
      & E \dar[bend left]{t \comp \phi} \ar[bend right,swap]{ll}{a}
          \dar[bend right,swap]{s\comp \phi} \rar{b}
      & B \dar[bend left]{} \dar[bend right,swap]{} \\
      1 & |[alias=B]| S \rar{h} \lar{} \ar[bend right,swap]{rr}{}
      & T \rar \ar[bend left,swap]{ll}{}
      & 1
    \end{tikzcd}
    \caption{Equivalent spans result in isomorphic spans of graphs.}
    \label{fig:equivalentspans}
  \end{figure}
\end{proof}

\begin{theorem}\label{theorem:spangraphfreefeedback}
  There exists an isomorphism of categories
  $\St(\Span(\Set)) \cong \SpanGraphV$.
  That is, the free feedback category over \(\Span(\Set)\) is isomorphic to the full subcategory of \(\Span(\Graph)\) given by single-vertex graphs.
\end{theorem}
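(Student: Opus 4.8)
The plan is to promote the assignment $K$ of \Cref{lemma:spanwelldefined} to a functor $\St(\Span(\Set)) \to \SpanGraphV$ and exhibit an explicit inverse. On objects, $K$ sends a set $A$ to the single-vertex graph $(A \rightrightarrows 1)$; since the objects of $\SpanGraphV$ are exactly single-vertex graphs, this is a bijection of object classes. The crucial observation is that the underlying data of a morphism coincide on the two sides: a morphism $A \to B$ in $\St(\Span(\Set))$ is a pair $(S \mid f)$ with $f$ a span $S \times A \xleftarrow{(s,f)} E \xrightarrow{(t,g)} S \times B$, i.e.\ a set of states $S$, a set of transitions $E$, maps $s,t \colon E \to S$ and boundary labels $f \colon E \to A$, $g \colon E \to B$; a morphism $A \to B$ in $\SpanGraphV$ is precisely a graph $(s,t \colon E \rightrightarrows S)$ together with legs $f, g$ into the single-vertex boundaries. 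The inverse of $K$ therefore sends such a span of graphs back to $(S \mid \phi)$ with $\phi = (S \times A \xleftarrow{(s,f)} E \xrightarrow{(t,g)} S \times B)$; on the raw data $K$ and this assignment are mutually inverse by inspection.

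What must be verified is (i) that the two quotients on morphisms agree, so that $K$ and its inverse descend to the quotients, and (ii) that $K$ is functorial. For (i), a morphism of $\St(\Span(\Set))$ is taken up to two equivalences: isomorphism of the span head $E$ (built into $\Span(\Set)$) and isomorphism of the state space $S$ (the feedback equivalence, axiom~\ref{axiom:slide}). \Cref{lemma:spanwelldefined} already checks that $K$ sends both of these to isomorphisms of spans of graphs. Conversely, an isomorphism of spans of graphs is a pair $(e,v)$ with $e \colon E \cong E'$ a bijection of edges and $v \colon S \cong S'$ a bijection of vertices commuting with source, target and legs; the edge component $e$ is exactly a span-head isomorphism and the vertex component $v$ is exactly a state-space isomorphism, in the normal form pinned down by \Cref{prop:isospans}. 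Hence the two equivalence relations generate each other, and $K$ is a bijection on hom-sets.

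For (ii), preservation of identities is immediate, since the identity $(1 \mid \im_{1 \times A})$ maps to $(A \rightrightarrows 1)$ with identity legs, which is $\im_A$ in $\SpanGraphV$. Preservation of composition is where the genuine content lies, but it has essentially been discharged by \Cref{lemma:composingstatefulspans}: that proposition rewrites the symmetry-heavy composite of stateful spans into the clean form whose head is the pullback $X \times_B Y$ and whose state space is $S \times T$. Applying $K$ yields the graph $(X \times_B Y \rightrightarrows S \times T)$ with synchronized source and target and legs $f \comp \pi_X$, $k \comp \pi_Y$ --- which is verbatim the sequential composite of open transition systems from \Cref{def:open-transition-system}. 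A term-by-term comparison then gives $K((S \mid f) \comp (T \mid g)) = K(S \mid f) \comp K(T \mid g)$.

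The main obstacle is the bookkeeping of (i): one must be sure that an abstract isomorphism of spans of graphs factors into \emph{independent} edge- and vertex-bijections and that these recombine precisely into the span and feedback equivalences of $\St(\Span(\Set))$, which is exactly what the normal form of \Cref{prop:isospans} secures. Once this correspondence and the composition comparison are in place, $K$ and its inverse are mutually inverse functors, giving the claimed isomorphism of categories; combined with \Cref{th:storeisfree}, this transports the universal property of the free feedback category to $\SpanGraphV$.
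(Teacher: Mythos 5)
Your proposal is correct and takes essentially the same approach as the paper: both use the assignment $K$ of \Cref{lemma:spanwelldefined}, establish preservation of composition via \Cref{lemma:composingstatefulspans} together with the componentwise (edges/vertices) computation of pullbacks of graphs, and match the two quotients by factoring an isomorphism of spans of graphs into its edge component (span-head isomorphism) and vertex component (state-space sliding), with \Cref{prop:isospans} securing the normal form. The only difference is presentational --- you exhibit an explicit inverse functor where the paper argues that $K$ is full, faithful, and bijective on objects --- which amounts to the same verification.
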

\begin{proof}
  We prove that there is a fully faithful functor
  \(K \colon \St(\Span(\Set)) \to \Span(\Graph)\) defined on objects as
  \(K(A) = (A \rightrightarrows 1)\) and defined on morphisms as in \Cref{lemma:spanwelldefined}.

  We now show that $K$ is functorial, preserving composition and identities.
  The identity morphism on \(A\) in \(\St(\Span(\Set))\) has state space \(1\), so it is a span $1 \times A \gets A \to 1 \times A$ and it is sent to the identity span on the graph $A \rightrightarrows 1$.

  Composition is also preserved.
  Let us consider two stateful spans
  \begin{align*}
    \gspan{(s(e),a(e))}{e \in E}{(s'(e),b(e))} &\in \Span(\sS \times \sA, \sS \times \sB)\mbox{ and } \\
    \gspan{(t(f),b'(f))}{f \in F}{(t'(f),c(f))} &\in \Span(\sS \times \sB, \sS \times \sC)
  \end{align*}
  By \Cref{lemma:composingstatefulspans}, their composition is given by the span
  \[\gspan{(s(e),t(f),a(e))}{(e,f) \in E \times F}{(s'(e),t'(f),c(f))}^{b(e)= b'(f)} \in \Span(\sS \times \sT \times \sA, \sS \times \sT \times \sC)\]
  where the head $E \times_{B} F$ is the pullback of $b$ and $b'$.

  \begin{figure}[H]
  \centering
  \begin{tikzcd}[column sep=small]
    && E \times_{B} F \dlar[swap]{\pi_{E}} \drar{\pi_{F}} \dar[xshift=0.5ex] \dar[xshift=-0.5ex] && \\
    & E \dlar[swap]{a} \drar[swap,near end]{b} \dar[xshift=0.5ex] \dar[xshift=-0.5ex]
    & S \times T \dlar[swap,near start]{\pi_{S}} \drar[near start]{\pi_{T}}
    & F \drar{c} \dlar[near end]{b'} \dar[xshift=0.5ex] \dar[xshift=-0.5ex]
    & \\
    A \dar[xshift=0.5ex] \dar[xshift=-0.5ex]
    & S \drar \dlar
    & B \dar[xshift=0.5ex] \dar[xshift=-0.5ex]
    & T \drar \dlar
    & C \dar[xshift=0.5ex] \dar[xshift=-0.5ex] \\
    1 && 1 && 1
  \end{tikzcd}
  \caption{Pullback of graphs.}\label{fig:graphpullback}
  \end{figure}

  We have composed two stateful spans and we want to show that the graph corresponding to their composition is the pullback of the graphs corresponding to them.
  Computing a pullback of graphs can be done separately on edges and vertices, as graphs form a presheaf category (see \Cref{fig:graphpullback}).
  Note how the resulting graph is precisely the graph corresponding, under the assignment $K$, to the stateful span computed above.

  We have shown that \(K\) is a functor.
  The final step is to show that it is fully-faithful.
  We can see that it is full: every span of single-vertex graphs given by $\gspan{f(e)}{e \in E}{g(e)} \in \Span(A, B)$ and $s, t \colon E \rightrightarrows S$ is the image of some span, namely
  \[\gspan{s(e),f(e)}{e \in E}{t(e),g(e)} \in \Span(S \times A, S \times B).\]
  Let us check it is also faithful.
  Suppose that two morphisms in \(\St(\Span(\Set))\), \(\sS \times \sA \gets E \to \sS \times \sB\) and \(\sS' \times \sA \gets E' \to \sS' \times \sB\), are sent to equivalent spans of graphs, i.e. there exist \(h \colon E \cong E'\) and \(k \colon S' \cong S\) making the diagrams in \Cref{fig:equivalentspangraph} commute.

\begin{figure}[H]
   \[\begin{tikzcd}
    A \dar[bend left] \dar[bend right]
    & E \dar[bend left]{t} \dar[bend right,swap]{s} \rar{h} \ar[swap]{l}{a} \ar[bend left]{rr}{b}
    & E' \dar[bend left]{t'} \ar[bend right,swap]{d}{s'} \ar[bend right,swap]{ll}{a'} \ar{r}{b'}
    & B \dar[bend left] \dar[bend right]
    \\ 1
    & S \rar{k} \ar[bend right]{rr} \ar{l}
    & S' \ar[bend left]{ll} \ar{r}
    & 1
  \end{tikzcd}\]
  \caption{Equivalent spans of graphs.}\label{fig:equivalentspangraph}
\end{figure}

  The isomorphism \(k\) makes the following spans equivalent as \statefulProcesses{}.
  \begin{align*}
    S \times A \gets & E \to S \times B\\
    S' \times A \gets & E \to S' \times B
  \end{align*}
  Moreover, the isomorphism \(h\) makes the following spans equivalent as spans, showing faithfullness of \(K\).
  \begin{align*}
    S' \times A \gets & E \to S' \times B\\
    S' \times A \gets & E' \to S' \times B
  \end{align*}

  We have shown that there exists a fully-faithful functor from the free feedback category over \(\Span(\Set)\) to the category \(\Span(\Graph)\) of spans of graphs.
  The functor restricts to an equivalence between \(\St(\Span(\Set))\) and the full subcategory of \(\Span(\Graph)\) on single-vertex graphs.
  It is moreover bijective on objects, giving an isomorphism of categories.
\end{proof}

\begin{example}
The characterization $\SpanGraphV \cong \St(\Span(\Set))$ that we prove in
  \Cref{theorem:spangraphfreefeedback} lifts the inclusion $\Set \to \Span(\Set)$ to a
  feedback preserving functor $\mathbf{Mealy} \to \SpanGraphV$. This inclusion embeds a
  deterministic transition system into the graph that determines it.
\end{example}

\begin{example}\label{ex:non-equivalent-automata}
  Following from \Cref{rem:sliding-isos}, we present an example of spans of graphs that would be equated if we assumed the sliding axiom~\ref{axiom:slide} of feedback categories for arbitrary morphisms rather than just isomorphisms.
  Consider the spans of sets \(\alpha \colon W \to V\) and \(h \colon V \to W \times \Bool\) as in \Cref{fig:non-equivalent-automata}, where $V = \{v_1, v_2\}$ and $W = \{w\}$.
  Depending on where the feedback operation is applied, we obtain two different spans of graphs, \(g = \fbk_V(h \comp (\alpha \tensor \im))\) and \(f = \fbk_W((\alpha \tensor \im) \comp h)\): the first one contains an additional transition. If we were to impose that the sliding axiom holds for non-isomorphisms, we could erase this additional transition, and obtain that \(f = g\) by sliding \(\alpha\) through the feedback loop.
\end{example}

\begin{figure}[h!]
  \noneqSpansGraph{}
  \caption{Spans of graphs that would be equated by a stronger notion of equivalence: \(g = \fbk(h \comp (\alpha \tensor \im)) \sim \fbk((\alpha \tensor \im) \comp h) = f\).}\label{fig:non-equivalent-automata}
\end{figure}

\subsection{Cospan(Graph) as a Feedback Category}

\Cref{theorem:spangraphfreefeedback} can be generalized to any category \(\catC\) with finite limits, where we can define graphs and spans of them.

A graph in a category \(\catC\) is given by two objects, \(E\) and \(V\), and two morphisms in \(\catC\), the source and the target \(s,t \colon E \to V\).
A morphism of graphs \(\alpha \colon G \to G'\) in \(\catC\) is a pair of morphisms, \(\alpha_{E} \colon E \to E'\) and \(\alpha_{V} \colon V \to V'\), in \(\catC\) that commute with the sources and the targets.
In categorical terms, these can be reformulated as functors and natural transformations.

\begin{definition}
  Let \(\catC\) be a category with finite limits.
  A graph in \(\catC\) is a functor from the diagram $(\bullet \rightrightarrows \bullet)$ to \(\catC\).
  A morphism of graphs in \(\catC\) is a natural transformation between the corresponding functors.
  Graphs in \(\catC\) form a category $\Graph(\catC)$.
\end{definition}

Categories of functors into \(\catC\) have all the limits that \(\catC\) has~\cite{maclane78}.
We can then form the category \(\Span(\Graph(\catC))\) and take its full subcategory on objects of the form $A \rightrightarrows 1$, i.e. ${\Span(\Graph(\catC))}_{\ast}$, to obtain:

\begin{theorem}\label{theorem:spangraphfreefeedbackarbitraryc}
  There exists an isomorphism of categories
  $\St(\Span(\catC)) \cong \Span(\Graph(\catC))_{\ast}$.
  That is, the free feedback category over \(\Span(\catC)\) is equivalent to the full
  subcategory on \(\Span(\Graph(\catC))\) given by single-vertex graphs.
\end{theorem}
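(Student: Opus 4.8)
The plan is to follow the proof of \Cref{theorem:spangraphfreefeedback} essentially verbatim, observing that every supporting result it relies on --- \Cref{lemma:composingstatefulspans}, \Cref{prop:isospans} and \Cref{lemma:spanwelldefined} --- was already stated and proved for an \emph{arbitrary} category $\catC$ with finite limits, not just for $\Set$. Concretely, I would define a functor $K \colon \St(\Span(\catC)) \to \Span(\Graph(\catC))$ acting on objects by $K(A) = (A \rightrightarrows 1)$ and on morphisms by the assignment of \Cref{lemma:spanwelldefined}, which sends a stateful span $\gspan{(s,f)}{e \in E}{(t,g)} \in \Span(S \times A, S \times B)$ to the span of graphs whose head is the graph $s, t \colon E \rightrightarrows S$ with legs $f, g$ over the single-vertex boundaries $A \rightrightarrows 1$ and $B \rightrightarrows 1$. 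Well-definedness of $K$ on morphisms --- that it respects both the isomorphism-of-spans quotient and the equivalence relation on state spaces --- is exactly the content of \Cref{lemma:spanwelldefined}, and requires no modification.

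Next I would verify that $K$ is a functor. Identities are immediate: the identity stateful process on $A$ has state space $1$ and is sent to the identity span on $A \rightrightarrows 1$. For composition, the key observation is that $\Graph(\catC)$ is by definition the functor category from $(\bullet \rightrightarrows \bullet)$ to $\catC$, so all limits it possesses are computed pointwise; in particular a pullback of spans of graphs is computed separately on the edge object and on the vertex object. Combining this with the explicit formula for the composite of two stateful spans from \Cref{lemma:composingstatefulspans}, the head $E \times_{B} F$ of the composite span, together with its induced graph structure over $S \times T$, is exactly the pullback of the two spans of graphs produced by $K$ --- the same diagram as \Cref{fig:graphpullback}, now interpreted in $\catC$ rather than $\Set$.

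It then remains to show that $K$ is fully faithful. Fullness is direct: any span of single-vertex graphs given by $\gspan{f(e)}{e \in E}{g(e)} \in \Span(A,B)$ with head $s,t \colon E \rightrightarrows S$ is $K$ applied to $\gspan{(s,f)}{e \in E}{(t,g)} \in \Span(S \times A, S \times B)$. For faithfulness, I would use \Cref{prop:isospans} to characterise isomorphisms in $\Span(\catC)$ as spans with identity left leg and isomorphism right leg, and then reproduce the two-step argument of \Cref{theorem:spangraphfreefeedback}: an isomorphism $k \colon S' \cong S$ witnessing equality of the underlying vertex objects identifies the two preimages as equivalent stateful processes, while an isomorphism $h \colon E \cong E'$ of edge objects identifies them as equal spans. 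Since $K$ is moreover bijective on the single-vertex objects, restricting it to the full subcategory $\Span(\Graph(\catC))_{\ast}$ yields the desired isomorphism of categories.

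The only genuinely new point over the $\Set$ case is that all of the above manipulations were carried out in the generalised-element notation for spans. The main (though routine) thing to check is therefore that this element-style reasoning is legitimate in an arbitrary finite-limit category: this is guaranteed by the Yoneda lemma, exactly as set up in the remark following \Cref{def:span}, since spans are determined up to isomorphism by the functor of generalised elements of their head. The substantive input --- that $\Graph(\catC)$ has componentwise limits because it is a functor category into $\catC$ --- is supplied by the standard fact that functor categories inherit all limits present in the target, so no obstacle beyond bookkeeping remains.
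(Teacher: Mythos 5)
Your proposal is correct and matches the paper's intended argument: the paper itself deliberately stated \Cref{lemma:composingstatefulspans}, \Cref{prop:isospans} and \Cref{lemma:spanwelldefined} for an arbitrary finite-limit category precisely so that the proof of \Cref{theorem:spangraphfreefeedback} transfers verbatim, with the only additional inputs being the pointwise computation of limits in the functor category $\Graph(\catC)$ and the Yoneda-justified generalized-element notation, both of which you identify. Nothing further is needed.
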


\label{sec:cospangraphfeedback}
\(\CospanGraphE\) is the dual algebra to \(\SpanGraphV\).
Its morphisms represent graphs with discrete boundaries:
while, in \(\SpanGraphV\), each transition in the graph is  assigned a boundary behavior, a morphism in \(\CospanGraphE\) is a graph where some vertices are marked as left boundary or right boundary vertices.
This allows graphs to be composed by identifying these boundary vertices.

\begin{definition}\label{def:graph-discrete-boundaries}
  A \emph{graph with discrete boundaries} \(g \colon X \to Y\) is given by a graph \(G = (s,t \colon E \rightrightarrows V)\) and two functions, \(l \colon X \to V\) and \(r \colon Y \to V\), marking the boundary vertices.
\end{definition}

\begin{example}\label{ex:cospangraph-composition}
  We represent the legs of a cospan as dashed arrows pointing to some vertices of the apex graph.
  \[\cospangraphExFig{}\]
  The composition of the above cospans of graphs is given by
  \[\cospangraphCompositionExFig\quad,\]
  where the vertices in the common boundary have been identified.
\end{example}

$\CospanGraphE$ can be also characterized as a free feedback category. We
know that \(\Cospan(\Set) \cong \Span(\Set^{op})\), we note that
$\Graph(\Set^{op}) \cong \Graph^{op}(\Set)$ (which has the effect of flipping edges and
vertices), and we can use \Cref{theorem:spangraphfreefeedbackarbitraryc} because
$\Set$ has all finite colimits. The explicit assignment is similar to the one
shown in \Cref{lemma:spanwelldefined}.
  \[K \left( \sS \;\middle|\; \begin{tikzcd}[column sep=tiny]
        & S  & \\
        E + A \urar{[t\mid a]} & & E + B  \ular[swap]{[s \mid b]}\end{tikzcd}\right)
    \coloneqq
    \left( \begin{tikzcd}
        A \rar{a} &
        S &
        B  \lar[swap]{b} \\
        0  \uar[bend left] \uar[bend right,swap] \rar
        & E  \uar[bend right,swap]{s} \uar[bend left]{t}
        & 0  \uar[bend left] \uar[bend right,swap]  \lar
      \end{tikzcd}\right)\]

\begin{corollary}
  There is an isomorphism
  \[\St(\Cospan(\Set)) \cong \CospanGraphE.\]
\end{corollary}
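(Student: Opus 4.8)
The plan is to obtain the corollary as a direct instance of \Cref{theorem:spangraphfreefeedbackarbitraryc}, specialized to the base category $\catC = \Set^{op}$, together with the duality observations recorded just above the statement. First I would note that, by the definition of $\Cospan$ (namely $\Cospan(\catD) = \Span(\catD^{op})$), the category $\Cospan(\Set)$ \emph{is} $\Span(\Set^{op})$; hence $\St(\Cospan(\Set))$ and $\St(\Span(\Set^{op}))$ are literally the same category, since $\St(\bullet)$ is here applied to equal symmetric monoidal categories and no choices are involved. Moreover, since $\Set$ has all finite colimits, $\Set^{op}$ has all finite limits, so $\Set^{op}$ is a legitimate base category for \Cref{theorem:spangraphfreefeedbackarbitraryc}, which then yields
\[\St(\Span(\Set^{op})) \cong \Span(\Graph(\Set^{op}))_{\ast},\]
where $(-)_{\ast}$ denotes the full subcategory on single-vertex graphs $A \rightrightarrows 1$. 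It remains to identify the right-hand side with $\CospanGraphE$.

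For this I would invoke the isomorphism $\Graph(\Set^{op}) \cong \Graph^{op}(\Set) = \Graph^{op}$, which exchanges the edge and vertex objects, as noted before the statement. This gives $\Span(\Graph(\Set^{op})) \cong \Span(\Graph^{op}) = \Cospan(\Graph)$, and chaining with the previous isomorphism produces $\St(\Cospan(\Set)) \cong \Cospan(\Graph)_{\ast}$, where the single-vertex condition must now be transported through the edge/vertex flip.

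The only genuine subtlety, and the step I expect to require care, is tracking this single-vertex condition across the duality and confirming it matches the explicit assignment $K$ displayed above. A single-vertex graph $A \rightrightarrows 1$ in $\Set^{op}$ has vertex object equal to the terminal object of $\Set^{op}$, which is the empty set $\emptyset$ of $\Set$, and edge object $A$; under the flip that swaps edges and vertices, it becomes the graph in $\Set$ with vertex set $A$ and \emph{empty} edge set, i.e. precisely a discrete graph. Thus $\Span(\Graph(\Set^{op}))_{\ast}$ corresponds, under $\Span(\Graph(\Set^{op})) \cong \Cospan(\Graph)$, to the full subcategory of $\Cospan(\Graph)$ on discrete graphs, which is by definition $\CospanGraphE$. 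Combining the displayed isomorphisms then gives $\St(\Cospan(\Set)) \cong \CospanGraphE$. The remaining work is the routine bookkeeping of checking that ``vertex object $=1$'' dualizes to ``edge object $=\emptyset$'' compatibly with $K$; everything else is a formal chaining of the two isomorphisms above.
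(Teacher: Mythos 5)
Your proposal is correct and takes essentially the same route as the paper: the paper likewise obtains the corollary by specializing \Cref{theorem:spangraphfreefeedbackarbitraryc} to $\Set^{op}$ (legitimate since $\Set$ has all finite colimits), using $\Cospan(\Set) \cong \Span(\Set^{op})$ and $\Graph(\Set^{op}) \cong \Graph^{op}(\Set)$, the latter flipping edges and vertices. Your bookkeeping that the single-vertex condition $A \rightrightarrows 1$ dualizes to discrete graphs (empty edge set) is exactly what the paper encodes implicitly in its displayed explicit assignment $K$.
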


\(\Cospan(\Graph)\) is also compact closed and, in particular, traced. As in the case of \(\Span(\Graph)\), the feedback structure given by the universal property is different from the trace. In the case of \(\Cospan(\Graph)\), tracing has the effect of identifying the output and input vertices of the graph; while feedback adds an additional edge from the output to the input vertices.
\begin{example}
  Tracing the cospan of a one-edge graph identifies the two vertices making it into a self-loop.
  On the other hand, taking feedback of the same cospan has the effect of adding another edge from the right boundary to the left one.
  \[\traceFbkCospanExFig{}\]
\end{example}

\subsection{Syntactical Presentation of Cospan(FinGraph)}\label{section:applications}
The observation in \Cref{proposition:delaycompactclosed} has an important consequence in the case of finite sets.
We write $\mathbf{Fin}\Graph$ for $\Graph(\FinSet)$.
$\Cospan(\FinSet)$ is the generic special commutative Frobenius algebra~\cite{lack04}, meaning that any morphism written out of the operations of a special commutative Frobenius monoid and the structure of a symmetric monoidal category is precisely a cospan of finite sets.
\Cref{fig:frobunnyus} represents the generators and the axioms of the generic special commutative Frobenius monoid.
\begin{figure}[h!]
  \frobunnyusAxioms{}
  \caption{Generators and axioms of the generic special commutative Frobenius monoid.}\label{fig:frobunnyus}
\end{figure}

But we also know that $\Cospan(\FinSet)$, with an added generator to its PROP structure~\cite{bonchi19}
is $\St(\Cospan(\FinSet))$, or, equivalently, $\Cospan(\mathbf{Fin}\Graph)$.
This means that any morphism written out of the operations of a special commutative Frobenius algebra plus a freely added generator of type $(\delayBox) \colon 1 \to 1$ is a morphism in ${\Cospan(\mathbf{Fin}\Graph)}_{\ast}$.
This way, we recover one of the main results of~\cite{rosebrugh05} as a direct corollary of our characterization.

\begin{proposition}[Proposition 3.2 of \cite{rosebrugh05}]
  ${\Cospan(\FinGraph)}_{\ast}$ is the generic special commutative Frobenius
  monoid with an added generator.
\end{proposition}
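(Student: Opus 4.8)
The plan is to obtain the statement as a \emph{direct corollary} of the characterization of $\Cospan(\FinGraph)_{\ast}$ as a free feedback category, combined with Lack's presentation of $\Cospan(\FinSet)$. I would assemble three ingredients and then chain them as isomorphisms of PROPs. First, by Lack~\cite{lack04} the base PROP $\Cospan(\FinSet)$ is exactly the generic special commutative Frobenius monoid, generated by the operations and subject to the axioms recalled in \Cref{fig:frobunnyus}. Second, the finite-set instance of \Cref{theorem:spangraphfreefeedbackarbitraryc} yields $\St(\Cospan(\FinSet)) \cong \Cospan(\FinGraph)_{\ast}$: one takes $\catC = \FinSet^{op}$, which has finite limits because $\FinSet$ has finite colimits, and uses $\Span(\FinSet^{op}) = \Cospan(\FinSet)$ together with $\Graph(\FinSet^{op}) \cong \Graph^{op}(\FinSet)$, so that $\Span(\Graph(\FinSet^{op}))_{\ast} \cong \Cospan(\FinGraph)_{\ast}$. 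Third, I would argue that $\St(\Cospan(\FinSet))$ is precisely $\Cospan(\FinSet)$ with one freely adjoined generator $(\delayBox) \colon 1 \to 1$.

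The only step that is not purely formal is the third ingredient, so I would carry it out first. The category $\Cospan(\FinSet)$ is compact closed: the special commutative Frobenius structure supplies the cups and caps $\eta$ and $\varepsilon$. Its free feedback extension $\St(\Cospan(\FinSet))$ inherits this compact closed structure, so \Cref{proposition:delaycompactclosed} applies: the feedback operator is necessarily a trace guarded by a family of delay endomorphisms $\delay_{A}$ satisfying $\delay_{A} \tensor \delay_{B} = \delay_{A \tensor B}$, $\delay_{I} = \im$, and naturality with respect to isomorphisms, and conversely any such family determines the feedback uniquely. Since the trace is already part of the compact closed structure inherited from $\Cospan(\FinSet)$, the only genuinely new datum in the free feedback category is this family of delays. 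On the tensor-generating object $1$ it is a single endomorphism $(\delayBox) \colon 1 \to 1$; monoidality forces the delay on $n$ to be its $n$-fold tensor power, and naturality with respect to the only isomorphisms of the PROP---the permutations---is automatic from the symmetry. By the freeness of $\St(\bullet)$ (\Cref{th:storeisfree}), no relations beyond these are imposed on $(\delayBox)$. This is exactly the identification used in~\cite{bonchi19}, and it shows that $\St(\Cospan(\FinSet))$ is obtained from $\Cospan(\FinSet)$ by freely adjoining a single generator of type $(\delayBox) \colon 1 \to 1$, i.e. it is the generic special commutative Frobenius monoid with an added generator.

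Finally I would compose the isomorphisms: the generic special commutative Frobenius monoid with an added generator is, by Lack's theorem and the previous paragraph, isomorphic to $\St(\Cospan(\FinSet))$, which is in turn isomorphic to $\Cospan(\FinGraph)_{\ast}$ by the finite-set instance of \Cref{theorem:spangraphfreefeedbackarbitraryc}. I expect the main obstacle to be making precise, in the language of PROPs, the claim that the free feedback construction adds \emph{exactly one} free generator and \emph{no} relations. This rests entirely on the uniqueness clause of \Cref{proposition:delaycompactclosed}---a compact closed feedback operator is determined by its delay on the tensor-generating object---together with the freeness in \Cref{th:storeisfree}, both of which are already available. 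The remainder is bookkeeping rather than new mathematics, which is why the result follows as a corollary.
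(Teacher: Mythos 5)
Your proposal is correct and takes essentially the same route as the paper's own proof: Lack's characterization of $\Cospan(\FinSet)$ as the generic special commutative Frobenius monoid, then \Cref{proposition:delaycompactclosed} to identify freely adjoining the generator $(\delayBox) \colon 1 \to 1$ with the family of delay endomorphisms that determines the free feedback structure, hence $\St(\Cospan(\FinSet))$, and finally the free-feedback characterization to conclude $\St(\Cospan(\FinSet)) \cong {\Cospan(\FinGraph)}_{\ast}$. You merely make two steps more explicit than the paper does---the delay-family bookkeeping ($\delay_{n} = (\delayBox)^{\tensor n}$, with permutation-naturality automatic) and the instantiation of \Cref{theorem:spangraphfreefeedbackarbitraryc} at $\catC = \FinSet^{op}$, which is in fact the more precise citation than the paper's reference to \Cref{theorem:spangraphfreefeedback}.
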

\begin{proof}
  It is known that the category $\Cospan(\FinSet)$ is the generic special
  commutative Frobenius algebra \cite{lack04}.
  Adding a free generator $(\delayBox) \colon 1 \to 1$ to its PROP structure
  corresponds to adding a family $(\delayBox)_{n} \colon n \to n$ with the conditions
  on \Cref{proposition:delaycompactclosed}. Now,
  \Cref{proposition:delaycompactclosed} implies that adding such a generator to
  $\Cospan(\FinSet)$ results in $\St(\Cospan(\FinSet))$.  Finally, we
  use \Cref{theorem:spangraphfreefeedback} to conclude that
  $\St(\Cospan(\FinSet)) \cong \Cospan(\FinGraph)_{\ast}$.
\end{proof}
\begin{example}
  The delay generator \(\delayBox \colon 1 \to 1\) in \(\Cospan(\FinGraph)_{\ast}\) can be interpreted as a single edge.
  Thus, we draw it as \(\textedgepic \colon 1 \to 1\).
  The cospans of graphs in \Cref{ex:cospangraph-composition} are represented by the string diagrams
  \[\cospangraphStringExFig{}\quad.\]
  Their composition is
  \[\cospangraphCompositionStringExFig{}\quad.\]
\end{example}
 \newpage

\section{Structured state spaces}\label{sec:generalizingfeedback}
This section extends the framework of feedback categories from mere transition systems to automata with initial and final states.
In order to achieve this, we generalize the feedback construction to deal with a richer structure.
The key ingredient in the generalization of feedback categories is a close examination of the sliding axiom: deciding which processes can be ``slid'' determines the notion of equality we want to apply.

\subsection{Structured Feedback Categories}
In order to capture automata, the state space \(\sS\) needs to be equipped with ``extra structure''.
This is achieved by letting the state space live in a different category \(\catS\) from the base category \(\catC\) and by having a way of ``forgetting'' the extra structure it carries through a monoidal functor \(\Rm \colon \catS \to \catC\).

A structured feedback operator on \(\catC\), then, takes a morphism acting on a structured state space (\Cref{fig:structuredfeedbackcategory}).

\begin{figure}[H]
  \centering
  \(\infer{\fbkOn{}{\sS}{\sA}{\sB}(\fm) \colon \sA \to \sB}{f \colon \Rm\sS \otimes \sA \to \Rm\sS \otimes \sB}\)
  \caption{Type of the operator $\fbkOn{}{\sS}{\sA}{\sB}(\bullet)$.}\label{fig:structuredfeedbackcategory}
\end{figure}

\begin{definition}[Structured feedback category]
  A \defining{linkcategorywithstructuredfeedback}{structured feedback category} is a
  symmetric monoidal category \((\catC,\tensor,I,\alpha^{\catC}, \lambda^{\catC}, \rho^{\catC})\) together with a symmetric monoidal category, \((\catS,\boxtimes, J, \alpha^{\catS}, \lambda^{\catS}, \rho^{\catS})\), representing structured state spaces, and a symmetric monoidal functor \((\Rm, \varepsilon, \mu) \colon \catS \to \catC\) endowed with an operator \(\fbkOn{}{\sS}{A}{B} \colon \catC(\Rm\sS \otimes A, \Rm\sS \otimes B) \to \catC(A,B)\), which satisfies the following axioms (B1-B5).
\addtolength\leftmargini{2em}
\begin{enumerate}[label={\quad(B\arabic*).}, ref={(B\arabic*)}, start=1 ]
  \item\label{axiom:Gtight} \defining{linkstructuredtighteningaxiom}{Tightening}. For every \(\sS \in \catS\), every morphism \(\fm \colon \Rm\sS \otimes \sA \to \Rm\sS \otimes \sB\) and every pair of morphisms \(\um \colon A' \to A\) and \(\vm \colon \sB \to B'\),
    \[\um \comp \fbkOn{}{\sS}{\sA}{\sB}(\fm) \comp \vm = \fbkOn{}{\sS}{\sA'}{\sB'}((\im \otimes \um) \comp \fm \comp (\im \otimes \vm)).\]
  \item\label{axiom:Gvanish} \defining{linkstructuredvanishingaxiom}{Vanishing}. For every $\fm \colon A \to B$,
    \[\fbkOn{}{J}{\sA}{\sB}((\inverse{\varepsilon} \tensor \im)\comp\fm\comp(\varepsilon\tensor\im)) = \fm.\]
  \item\label{axiom:Gjoin} \defining{linkstructuredjoiningaxiom}{Joining}. For every \(\sS, \sT \in \catS\) and every morphism $\fm \colon \Rm\sS \tensor \Rm\sT \tensor \sA \to \Rm\sS \tensor \Rm\sT \tensor \sB$,
    \[\fbkOn{}{\sT}{\sA}{\sB}(\fbkOn{}{\sS}{\sS \otimes \sA}{\sS \otimes \sB}(\fm)) = \fbkOn{}{\sS \otimes \sT}{A}{B}((\inverse{\mu}\tensor\im) \comp f \comp (\mu \tensor \im)).\]
  \item\label{axiom:Gstrength} \defining{linkstructuredstrengthaxiom}{Strength}. For every \(\sS \in \catS\), every morphism $\fm \colon \Rm\sS \tensor \sA \to \Rm\sS \tensor \sB$ and every morphism $\gm \colon \sA' \to \sB'$,
    \[\fbkOn{}{\sS}{\sA}{\sB}(\fm) \otimes \gm = \fbkOn{}{\sS}{\sA \otimes \sA'}{\sB \otimes \sB'}(\fm \otimes \gm).\]
  \item\label{axiom:Gslide} \defining{linkstructuredslidingaxiom}{Sliding}. For every \(\sS,\sT \in \catS\), every $\fm \colon \Rm\sT \tensor \sA \to \Rm\sS \tensor \sB$ and every $h \colon \sS \to \sT$ in \(\catS\),
    \[\fbkOn{}{\sT}{\sA}{\sB}(\fm \comp (\Rm\hm \otimes \im)) = \fbkOn{}{\sS}{\sA}{\sB}((\Rm\hm \otimes \im) \comp \fm).\]
\end{enumerate}
\end{definition}

\begin{remark}
  The sliding axiom encodes the fact that applying a transformation to the state space $h \colon S \to T$ just before computing the next state should be essentially the same as applying the same transformation to the state space just before retrieving the current state.  In the particular case where all transformations are asked to be reversible (i.e. isomorphisms)\footnote{Here, \(\catS\) is the subcategory of isomorphisms of \(\catC\) and \(R\) is the inclusion functor.}, this sliding axiom~\ref{axiom:Gslide} particularizes to the sliding axiom of plain feedback categories~\ref{axiom:slide}.
\end{remark}

\begin{definition}[Structured feedback functor]
  A \defining{linkstructuredfeedbackfunctor}{structured feedback functor} \((F,G) \colon \catC \to \catC'\) between two \hyperlink{linkcategorywithstructuredfeedback}{structured feedback categories} $(\catC,\catS,\Rm,\fbkOn{}{}{}{})$ and $(\catC',\catS',\Rm',\fbkIsi{}')$ is a pair of \hyperlink{linkmonoidalfunctor}{symmetric monoidal functors}, $(F,\varepsilon,\mu)$ and $(G,\varepsilon^{G},\mu^{G})$, with types \(F \colon \catC \to \catC'\) and \(G \colon \catS \to \catS'\) such that \(R \comp F = G \comp R'\) and
  \[F(\fbkOn{}{\sS}{\sA}{\sB}(\fm)) = \fbkIsi{G\sS}'(\mu \comp Ff \comp \mu^{-1}).\]
  We write \defining{linkcatstructuredfeedback}{$\mathsf{SFeedback}$} for the category of (small) \hyperlink{linkcategorywithstructuredfeedback}{structured feedback categories} and \hyperlink{linkstructuredfeedbackfunctor}{structured feedback functors}.
  There is a forgetful functor ${\cal U} \colon \SFEEDBACK \to \SYMMON$.
\end{definition}

\subsection{Structured $\mathsf{St}(\bullet)$ Construction}\label{sec:structured-st-construction}

In the same way that the free feedback category was realized by stateful processes, the free structured feedback category is realized by stateful processes with a structured state space $S \in \catS$. A functor $\Rm \colon \catS \to \catC$ forgets the extra structure of this space.

Following the analogy, stateful processes with structured state space are pairs
$(S \mid f)$ consisting of a structured state space $S \in \catS$ and a morphism
$f \colon \Rm S \tensor A \to \Rm S \tensor B$.
We shall consider morphisms up to sliding of their state space, as in \Cref{fig:structured-stateful-process}.

\begin{figure}[h!]
\centering
\begin{tikzpicture}[x=0.75pt,y=0.75pt,yscale=-1,xscale=1]
\draw   (130,60) -- (150,60) -- (150,100) -- (130,100) -- cycle ;
\draw    (110,90) -- (130,90) ;
\draw    (120,70) -- (124.83,70) -- (130,70) ;
\draw    (110,70) -- (120,70) ;
\draw    (150,90) -- (210,90) ;
\draw    (150,70) -- (170,70) ;
\draw  [color={rgb, 255:red, 155; green, 155; blue, 155 }  ,draw opacity=1 ][fill={rgb, 255:red, 155; green, 155; blue, 155 }  ,fill opacity=1 ] (107.46,70) .. controls (107.46,68.62) and (108.6,67.5) .. (110,67.5) .. controls (111.4,67.5) and (112.54,68.62) .. (112.54,70) .. controls (112.54,71.38) and (111.4,72.5) .. (110,72.5) .. controls (108.6,72.5) and (107.46,71.38) .. (107.46,70) -- cycle ;
\draw    (190,70) -- (210,70) ;
\draw   (170,60) -- (190,60) -- (190,80) -- (170,80) -- cycle ;
\draw  [color={rgb, 255:red, 155; green, 155; blue, 155 }  ,draw opacity=1 ][fill={rgb, 255:red, 155; green, 155; blue, 155 }  ,fill opacity=1 ] (207.46,70) .. controls (207.46,68.62) and (208.6,67.5) .. (210,67.5) .. controls (211.4,67.5) and (212.54,68.62) .. (212.54,70) .. controls (212.54,71.38) and (211.4,72.5) .. (210,72.5) .. controls (208.6,72.5) and (207.46,71.38) .. (207.46,70) -- cycle ;
\draw   (320,60) -- (340,60) -- (340,100) -- (320,100) -- cycle ;
\draw    (340,90) -- (360,90) ;
\draw    (270,70) -- (274.83,70) -- (280,70) ;
\draw    (260,70) -- (270,70) ;
\draw    (260,90) -- (320,90) ;
\draw    (300,70) -- (320,70) ;
\draw  [color={rgb, 255:red, 155; green, 155; blue, 155 }  ,draw opacity=1 ][fill={rgb, 255:red, 155; green, 155; blue, 155 }  ,fill opacity=1 ] (257.46,70) .. controls (257.46,68.62) and (258.6,67.5) .. (260,67.5) .. controls (261.4,67.5) and (262.54,68.62) .. (262.54,70) .. controls (262.54,71.38) and (261.4,72.5) .. (260,72.5) .. controls (258.6,72.5) and (257.46,71.38) .. (257.46,70) -- cycle ;
\draw    (340,70) -- (360,70) ;
\draw   (280,60) -- (300,60) -- (300,80) -- (280,80) -- cycle ;
\draw  [color={rgb, 255:red, 155; green, 155; blue, 155 }  ,draw opacity=1 ][fill={rgb, 255:red, 155; green, 155; blue, 155 }  ,fill opacity=1 ] (357.46,70) .. controls (357.46,68.62) and (358.6,67.5) .. (360,67.5) .. controls (361.4,67.5) and (362.54,68.62) .. (362.54,70) .. controls (362.54,71.38) and (361.4,72.5) .. (360,72.5) .. controls (358.6,72.5) and (357.46,71.38) .. (357.46,70) -- cycle ;
\draw (140,80) node    {$\fm$};
\draw (116.5,96.5) node  [font=\scriptsize]  {$\sA$};
\draw (116.5,56.5) node  [font=\scriptsize]  {$\Rm\sT$};
\draw (203.5,96.5) node  [font=\scriptsize]  {$\sB$};
\draw (161.5,56.5) node  [font=\scriptsize]  {$\Rm\sS$};
\draw (180,70) node    {$\Rm\hm$};
\draw (203.5,56.5) node  [font=\scriptsize]  {$\Rm\sT$};
\draw (226,73.4) node [anchor=north west][inner sep=0.75pt]    {$=$};
\draw (330,80) node    {$\fm$};
\draw (266.5,96.5) node  [font=\scriptsize]  {$\sA$};
\draw (266.5,56.5) node  [font=\scriptsize]  {$\Rm\sS$};
\draw (353.5,96.5) node  [font=\scriptsize]  {$\sB$};
\draw (311.5,56.5) node  [font=\scriptsize]  {$\Rm\sT$};
\draw (290,70) node    {$\Rm\hm$};
\draw (353.5,56.5) node  [font=\scriptsize]  {$\Rm\sS$};
\end{tikzpicture}
    \caption{Equivalence of \protect\structuredstatefulProcesses{}. We depict \protect\structuredstatefulProcesses{} by marking the state space.}\label{fig:structured-stateful-process}
  \end{figure}

\begin{definition}[Category of structured stateful processes]
  \nicelinktarget{linkGFbk}\nicelinktarget{linkstructuredfeedbackcircuit}
  Consider a pair of \hyperlink{linksymmetricmonoidalcategory}{symmetric monoidal categories} $(\catC,\tensor,I,\alpha^{\catC}, \lambda^{\catC}, \rho^{\catC})$ and \((\catS,\boxtimes,J,\alpha^{\catS}, \lambda^{\catS}, \rho^{\catS})\) and a symmetric monoidal functor \((\Rm, \varepsilon, \mu) \colon \catS \to \catC\).
  We write $\GSt(\catC,\Rm)$ for the category with the objects of $\catC$ but where morphisms $A \to B$ are pairs $(\sS \mid f)$ consisting of a state space $\sS \in \catS$ and a morphism $f \colon \Rm\sS \tensor A \to \Rm\sS \tensor B$.
  We consider morphisms up to equivalence of their state spaces, where the equivalence relation is generated by
  \[(\sS \mid (\Rm\hm \tensor \im) \comp f) \sim_{\catS} (\sT \mid f \comp (\Rm\hm \tensor \im))\mbox{ for any }h \colon \sS \to \sT.\]
Identities, composition, monoidal product and the feedback operator \(\defining{linkstructuredstore}{\ensuremath{\Gstore{}(\bullet)}}\) are defined in analogous ways as for \statefulProcesses{} (\Cref{def:sequentialcomposition}).
  When depicting a \structuredstatefulProcess{} (\Cref{fig:structured-stateful-process}) we mark the state strings.
\end{definition}

\begin{remark}
  In other words, structured stateful processes are elements of the following coproduct quotiented by the smallest equivalence relation $(\sim_{\catS})$ satisfying sliding: $((\Rm\hm \tensor \im) \comp f) \sim_{\catS} (f \comp (\Rm\hm \tensor \im))$.
  \begin{equation*}
    \GSt(\catC,\Rm)(X,Y) \coloneqq \left( \sum_{S \in \catS} \catC (\Rm\sS \tensor A, \Rm\sS \tensor B) \right) \big/ (\sim_{\catS}).
  \end{equation*}
  This quotient is a particular form of colimit called a \emph{coend}~\cite{maclane78}.
\end{remark}

Repeating the proof from Katis, Sabadini and Walters~\cite{katis97} in this generalized setting, we can show that structured stateful processes form the free structured feedback category.

\begin{theorem}\label{th:statestructured}
  The category \(\GSt(\catC,\Rm)\), endowed with the \(\Gstore{}(\bullet)\) operator, is the free structured feedback category over a \hyperlink{linksymmetricmonoidalcategory}{symmetric monoidal category} \(\catC\) with a symmetric monoidal functor \(\Rm \colon \catS \to \catC\).
\end{theorem}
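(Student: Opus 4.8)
The plan is to follow the proof of \Cref{th:storeisfree} due to Katis, Sabadini and Walters~\cite{katis02}, carrying out every step in the presence of the forgetful functor $\Rm \colon \catS \to \catC$ and its coherence data $(\varepsilon, \mu)$. The argument splits into two halves: first, showing that $(\GSt(\catC,\Rm), \Gstore{}(\bullet))$ genuinely is a structured feedback category; second, establishing its universal property as a free object over the state structure $\Rm \colon \catS \to \catC$.

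For the first half, I would begin by checking that sequential composition, monoidal product and identities descend to the quotient by $(\sim_{\catS})$ and make $\GSt(\catC,\Rm)$ a symmetric monoidal category. Well-definedness on $(\sim_{\catS})$-classes is the only genuinely new point relative to \Cref{def:sequentialcomposition}: a sliding $(\Rm\hm \tensor \im) \comp f \sim_{\catS} f \comp (\Rm\hm \tensor \im)$ in one factor can be pushed through composition and tensoring because $\Rm\hm$ slides past the wiring morphisms, exactly as in the plain case. Associativity, unitality and the interchange law then follow by the same string-diagram manipulations used for $\St(\catC)$ (\Cref{fig:associativityproof}), with the state spaces reassociated by $\alpha^{\catS}$ rather than $\alpha^{\catC}$. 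I would then verify axioms~\ref{axiom:Gtight}--\ref{axiom:Gslide} for $\Gstore{}(\bullet)$: tightening~\ref{axiom:Gtight}, strength~\ref{axiom:Gstrength} and sliding~\ref{axiom:Gslide} are immediate from the definitions and the equivalence relation, whereas the vanishing~\ref{axiom:Gvanish} and joining~\ref{axiom:Gjoin} axioms are precisely where the coherence isomorphisms $\varepsilon \colon J \to \Rm J$ and $\mu \colon \Rm\sS \tensor \Rm\sT \to \Rm(\sS \tensor \sT)$ enter, accounting for the extra conjugations that appear in their statements.

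For the universal property, let $(\catC', \catS', \Rm', \fbkIsi{}')$ be an arbitrary structured feedback category, and let $F_0 \colon \catC \to \catC'$ and $G \colon \catS \to \catS'$ be symmetric monoidal functors commuting with the state structures, $\Rm \comp F_0 = G \comp \Rm'$. Writing $\iota \colon \catC \to \GSt(\catC,\Rm)$ for the evident embedding (state space $J$), every morphism factors as $(\sS \mid \fm) = \Gstore{\sS}(\iota\fm)$, so any structured feedback functor $(F,G)$ extending $F_0$ along $\iota$ is \emph{forced} to satisfy
\[
  F(\sS \mid \fm) \;=\; \fbkIsi{G\sS}'\bigl(\phi \comp F_0\fm \comp \inverse{\phi}\bigr),
\]
where $\phi$ is the coherence isomorphism of $F_0$; this yields uniqueness. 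I would then take this equation as the \emph{definition} of $F$ and check existence, i.e.\ that $F$ is well-defined on $(\sim_{\catS})$-classes and preserves identities, composition, the monoidal product and the feedback operator. Preservation of composition and of $\Gstore{}(\bullet)$ reduces, via the joining and strength axioms holding in $\catC'$, to the same rearrangement computation that underlies the associativity proof above.

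The step I expect to be the main obstacle is well-definedness of $F$ on $(\sim_{\catS})$-equivalence classes, which is also exactly what distinguishes the structured construction from the plain one. Given a sliding witnessed by an \emph{arbitrary} $h \colon \sS \to \sT$ in $\catS$, the commuting square $\Rm \comp F_0 = G \comp \Rm'$ turns $F_0(\Rm\hm)$ into $\Rm'(G\hm)$, so the image sliding is along $G\hm \colon G\sS \to G\sT$ in $\catS'$; it is then the \emph{target} sliding axiom~\ref{axiom:Gslide} (for non-isomorphisms, not merely the isomorphism form of~\ref{axiom:slide}) that makes the two values of $F$ agree. The remaining work is bookkeeping: faithfully tracking $(\varepsilon,\mu)$ through the vanishing and joining axioms and through the extension formula, and confirming that $G$ supplies the second component making $(F,G)$ a structured feedback functor.
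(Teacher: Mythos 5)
Your proposal is correct and takes essentially the same approach as the paper: the paper gives no detailed proof of \Cref{th:statestructured}, stating only that the result follows by repeating the Katis--Sabadini--Walters proof of \Cref{th:storeisfree} in the generalized setting, which is precisely the plan you carry out. You also correctly isolate the only genuinely new points---well-definedness on $(\sim_{\catS})$-classes, which requires the target's sliding axiom~\ref{axiom:Gslide} for \emph{arbitrary} morphisms of $\catS$ rather than the isomorphism-only form~\ref{axiom:slide}, and the coherence data $(\varepsilon,\mu)$ of $\Rm$ entering the vanishing~\ref{axiom:Gvanish} and joining~\ref{axiom:Gjoin} axioms---which is exactly where the generalization differs from the plain construction.
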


\subsection{Categories of Automata}

Let us present classical automata as an example of the construction of structured feedback.
Automata have a structured state space where a particular state is considered the ``initial state'', and a subset of states are considered ``final''.  We can canonically recover a suitable category of automata as the free feedback category over these structured spaces.

\begin{definition}
  \defining{linkautomatonstatespace}{}
  An \emph{automaton state space} $(S,i_{S},f_{S})$ is a finite set $S$ together with an \emph{initial state} $i_{S} \in S$ and a subset of final states $f_{S} \colon S \to 2$. The product of two automaton state spaces, $(S,i_{S},f_{S})$ and $(T,i_{T},f_{T})$, is the state space $(S \times T,(i_{S},i_{T}),f_{S} \wedge f_{T})$,
  where the final states are pairs of final states, $(f_{S} \wedge f_{T})(s,t) = f_{S}(s) \wedge f_{T}(t)$.
  A morphism of automaton state spaces $\alpha \colon (S,i_{S},f_{S}) \to (T,i_{T},f_{T})$ is a function $\alpha \colon S \to T$ such that $i_{S} \comp \alpha = i_{T}$ and $f_{S} = \alpha \comp f_{T}$. Automaton state spaces form a symmetric monoidal category, $\AutSt$.
\end{definition}

\begin{remark}
  As a consequence, an isomorphism of automata state spaces
  \[\alpha \colon (S,i_{S},f_{S}) \cong (T,i_{T},f_{T})\]
  is an isomorphism $\alpha \colon S \cong T$ such that $i_{S} \comp \alpha = i_{T}$ and $f_{S} = \alpha \comp f_{T}$. These form a subcategory $\mathbf{Iso}(\AutSt)$ with forgetful functors $\mathsf{U}_{\mathsf{Iso}} \colon \mathbf{Iso}(\AutSt) \to \mathbf{FinSet}$ and $\mathsf{U}_{\mathsf{Aut}} \colon \mathbf{Iso}(\AutSt) \to \mathbf{Span}(\mathbf{FinSet})$.
\end{remark}

\begin{definition}
  \defining{linkmealydeterministicfiniteautomaton}
  A \emph{Mealy deterministic finite automaton}
  $(S,A,B,i_{S},f_{S},t_{S})$ is given by a finite set of states $S$, a finite
  alphabet of input symbols $A$ and a finite alphabet of output symbols $B$, an initial state $i_{S} \in S$, a set of final states $f_{S} \colon S \to 2$, and a transition function $t_{S} \colon S \times A \to S \times B$.
  The product of two deterministic finite automata,
  \[(S,A,B,i_{S},f_{S},t_{S})\mbox{ and }(S',A',B',i_{S'},f_{S'},t_{S'}),\]
  is the automaton $(S \times T,A,C,(i_{S},i_{S'}),(f_{S} \wedge f_{S'}),(t_{S} \times t_{S'}))$,
  where the transition function computes a pair of independent transitions,
  \[(t_{S} \times t_{S'})(s,s',a,a') = (t_{S}(s,a), t_{S'}(s',a')).\]
  The sequential synchronization of two deterministic finite automata,
  \[(S,A,B,i_{S},f_{S},t_{S})\mbox{ and }(T,B,C,i_{T},f_{T},t_{T}),\]
  is the automaton $(S \times T,A,C,(i_{S},i_{T}),(f_{S} \wedge f_{T}),(t_{S} \wedge t_{T}))$,
  where the transition function $(t_{S} \wedge t_{T})$ uses the output of the first transition as the input of the second
  \[(t_{S} \wedge t_{T})(s,t,a) = (s',t',c) \mbox{ where }
    t_{S}(s,a) = (s',b) \mbox{ and } t_{T}(t,b) = (t',c).\]
  We consider Mealy deterministic automata up to isomorphism of their state space.
  Mealy deterministic finite automata form a symmetric monoidal category, $\MealyAut$, with sequential composition and product as defined above.
\end{definition}

This construction, together with the results of~\Cref{sec:structured-st-construction}, leads to the following result.

\begin{proposition}\label{prop:mealydeterministicautomata}
  The category of Mealy deterministic finite automata is the free structured feedback category over the isomorphisms of automaton state spaces $\mathsf{U}_{\mathsf{Aut}} \colon \mathbf{Iso}(\AutSt) \to \mathbf{FinSet}$, that is,
  \[\MealyAut \cong \GSt(\mathbf{FinSet},\mathsf{U}_{\mathsf{Aut}}).\]
\end{proposition}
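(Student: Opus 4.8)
The plan is to reduce the statement to \Cref{th:statestructured}, which already identifies $\GSt(\mathbf{FinSet},\mathsf{U}_{\mathsf{Aut}})$ as the free structured feedback category over $\mathsf{U}_{\mathsf{Aut}} \colon \mathbf{Iso}(\AutSt) \to \mathbf{FinSet}$. It then suffices to exhibit an isomorphism of symmetric monoidal categories $\MealyAut \cong \GSt(\mathbf{FinSet},\mathsf{U}_{\mathsf{Aut}})$ and transport the feedback structure along it. Both categories have finite sets (the alphabets) as objects, so I take the comparison functor to be the identity on objects. On morphisms, I observe that the data of a Mealy deterministic finite automaton $(S,A,B,i_S,f_S,t_S)$ is exactly the data of a structured stateful process $((S,i_S,f_S) \mid t_S)$: a structured state space $(S,i_S,f_S) \in \AutSt$ together with a morphism $t_S \colon \mathsf{U}_{\mathsf{Aut}}(S,i_S,f_S) \times A \to \mathsf{U}_{\mathsf{Aut}}(S,i_S,f_S) \times B$ in $\mathbf{FinSet}$. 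This assignment is a bijection on the underlying data.

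Next I check that the two quotients agree. In $\GSt(\mathbf{FinSet},\mathsf{U}_{\mathsf{Aut}})$ the state category is $\catS = \mathbf{Iso}(\AutSt)$, all of whose morphisms are invertible; hence, as noted in the remark relating the structured sliding axiom~\ref{axiom:Gslide} to the plain sliding axiom~\ref{axiom:slide}, the induced equivalence is precisely identification up to an isomorphism of automaton state spaces $\alpha \colon (S,i_S,f_S) \cong (T,i_T,f_T)$, that is, a bijection $\alpha$ commuting with the initial states and final-state predicates and conjugating the transition function. This is exactly the relation under which $\MealyAut$ identifies automata, so the bijection descends to equivalence classes and is still a bijection there.

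Then I verify functoriality by unfolding composition and monoidal product. The identity of $\GSt$ is $(J \mid \im)$ with $J = (\{\ast\},\ast,\top)$ the monoidal unit of $\AutSt$; under the correspondence this is the single-state identity automaton, matching the identity of $\MealyAut$. Computing the sequential composition of $((S,i_S,f_S) \mid t_S)$ and $((T,i_T,f_T) \mid t_T)$ via \Cref{def:sequentialcomposition} gives, element-wise, $(s,t,a) \mapsto (s',t',c)$ where $t_S(s,a) = (s',b)$ and $t_T(t,b) = (t',c)$, carried by the state space $(S,i_S,f_S) \times (T,i_T,f_T) = (S \times T, (i_S,i_T), f_S \wedge f_T)$; this is exactly the sequential synchronization $(t_S \wedge t_T)$ together with its combined initial/final data. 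Analogously, the parallel composition of structured stateful processes reproduces the independent product $(t_S \times t_{S'})$ on the state space $(S \times S', (i_S,i_{S'}), f_S \wedge f_{S'})$. Hence the assignment is a strong symmetric monoidal functor that is bijective on objects and on equivalence classes of morphisms, i.e.\ an isomorphism of symmetric monoidal categories; transporting the $\Gstore{}(\bullet)$ operator endows $\MealyAut$ with the free structured feedback structure, giving the claim.

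The main obstacle I anticipate is the bookkeeping in the previous paragraph: the compositions of $\GSt$ are defined abstractly through symmetries (\Cref{def:sequentialcomposition}), so I must carefully track the reordering of the state wires and confirm that, after evaluating these symmetries in $\mathbf{FinSet}$, the resulting transition functions coincide on the nose with the hand-defined $(t_S \wedge t_T)$ and $(t_S \times t_{S'})$, while simultaneously checking that the initial-state and final-state structure carried by $\AutSt$ matches exactly the product structure built into the definitions of composition and product in $\MealyAut$.
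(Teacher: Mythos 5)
Your proposal is correct and matches the paper's (implicit) argument exactly: the paper states this proposition as a direct consequence of \Cref{th:statestructured}, leaving precisely the definition-unfolding you carry out -- identifying a Mealy automaton $(S,A,B,i_S,f_S,t_S)$ with the structured stateful process $((S,i_S,f_S)\mid t_S)$, noting that sliding along $\mathbf{Iso}(\AutSt)$ is exactly identification up to isomorphism of automaton state spaces, and checking that the $\GSt$ composition and tensor evaluate to $(t_S \wedge t_T)$ and $(t_S \times t_{S'})$ on the product state spaces. Your closing caveat about tracking the symmetries in \Cref{def:sequentialcomposition} is the only bookkeeping involved, and it works out as you expect.
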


\subsection{Automata in Span(Graph)}

Our final construction is the canonical category of automata over the algebra of predicates given by spans.
These automata are analogous to the previous transition systems in $\SpanGraphV$, but their state space contains an initial state and a set of final states.
A similar definition has appeared previously in the literature for the modeling of Petri nets~\cite{Rathke2014PN}.

\begin{definition}\label{def:spanautomata}
  \defining{linkspanautomaton}{}
  A span automaton with left labels \(A\) and right labels \(B\)
  \[\mathbf{X} = (E_{X},S_{X},A,B,s_{X},t_{X},l_{X},r_{X},i_{X},f_{X})\]
  is given by a finite set of states $S_{X}$, a finite set of transitions $E_{X}$ with source and target $s_{X}, t_{X} \colon E_{X} \to S_{X}$ and with left \(l_{X} \colon E_{X} \to A\) and right \(r_{X} \colon E_{X} \to B\) labels, an initial state $i_{X} \in S_{X}$, and a subset of final states $f_{X} \colon S_{X} \to 2$.
  The product of two span automata
  \begin{align*}
    \mathbf{X} &= (E_{X},S_{X},A,B,s_{X},t_{X},l_{X},r_{X},i_{X},f_{X})\mbox{ and }\\
    \mathbf{Y} &= (E_{Y},S_{Y},A',B',s_{Y},t_{Y},l_{X},r_{X},i_{Y},f_{Y}),
  \end{align*}
  is given by the component-wise product
  \begin{multline*}
    \mathbf{X} \tensor \mathbf{Y} \coloneqq (E_{X} \times E_{Y}, S_{X} \times S_{Y}, A \times A', B \times B',\\
    s_{X} \times s_{Y}, t_{X} \times t_{Y}, l_{X} \times l_{Y}, r_{X} \times r_{Y}, i_{X} \times i_{Y}, f_{X} \wedge f_{Y}).
  \end{multline*}
  The composition of two span automata
  \begin{align*}
    \mathbf{X} &= (E_{X},S_{X},A,B,s_{X},t_{X},l_{X},r_{X},i_{X},f_{X})\mbox{ and }\\
    \mathbf{Y} &= (E_{Y},S_{Y},B,C,s_{Y},t_{Y},l_{X},r_{X},i_{Y},f_{Y}),
  \end{align*}
  is given by a pullback
  \[\mathbf{X} \comp \mathbf{Y} \coloneqq (E_{X} \times_{B} E_{Y}, S_{X} \times S_{Y},A,C,s_{X} \times s_{Y}, t_{X} \times t_{Y},l_{X},r_{Y},i_{X} \times i_{Y}, f_{X} \wedge f_{Y}),\]
  where \(E_{X} \times_{B} E_{Y}\) is the pullback of \(E_{X} \overset{r_{X}}{\to} B \overset{l_{Y}}{\leftarrow} E_{Y}\).
  We consider span automata up to isomorphism of their state space.
  Span automata form a symmetric monoidal category \(\SpanAut\) with sequential composition and the monoidal product defined above.
\end{definition}

This construction, together with the results of~\Cref{sec:structured-st-construction}, leads to the following result.

\begin{proposition}
The category of span automata is the free structured feedback category over the category of spans with the inclusion functor from automaton state spaces, $\mathsf{U}_{\mathsf{AutSpan}} \colon \mathbf{Iso}(\AutSt) \to \Span(\mathbf{FinSet})$, that is,
  \[ \SpanAut \cong \GSt(\Span(\mathbf{FinSet}),\mathsf{U}_{\mathsf{AutSpan}}).\]
\end{proposition}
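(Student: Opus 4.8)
The plan is to lean on the general result \Cref{th:statestructured}, which already tells us that $\GSt(\Span(\mathbf{FinSet}),\mathsf{U}_{\mathsf{AutSpan}})$ \emph{is} the free structured feedback category over $\Span(\mathbf{FinSet})$ with the functor $\mathsf{U}_{\mathsf{AutSpan}} \colon \mathbf{Iso}(\AutSt) \to \Span(\mathbf{FinSet})$. What then remains is to exhibit a concrete isomorphism of categories between this category and $\SpanAut$. The construction runs exactly parallel to the proof of \Cref{theorem:spangraphfreefeedback}, but now carrying the extra initial- and final-state data along, precisely in the style of \Cref{prop:mealydeterministicautomata}: it is really a combination of those two arguments.

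First I would define a comparison functor $K \colon \GSt(\Span(\mathbf{FinSet}),\mathsf{U}_{\mathsf{AutSpan}}) \to \SpanAut$, which is the identity on objects (both categories have finite label sets as objects) and which sends a structured stateful span
\[\left( (S,i_S,f_S) \;\middle|\; \gspan{(s(e),l(e))}{e \in E}{(t(e),r(e))} \right) \in \Span(S \times A,\, S \times B)\]
to the span automaton $(E,S,A,B,s,t,l,r,i_S,f_S)$. This merely decomposes the two legs of the span into a source/target pair $(s,t)$ and a label pair $(l,r)$, while retaining the initial state $i_S$ and final-state predicate $f_S$ supplied by the structured state space. Well-definedness with respect to the sliding equivalence is the crux of this step: two representatives are identified in $\GSt$ when related by sliding some $h \colon (S,i_S,f_S) \to (T,i_T,f_T)$ of $\mathbf{Iso}(\AutSt)$ through the loop. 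By \Cref{prop:isospans} the underlying span of $\mathsf{U}_{\mathsf{AutSpan}}(h)$ is the graph of the bijection $h \colon S \to T$, so sliding it is a relabelling of states by a bijection which, being a morphism of $\mathbf{Iso}(\AutSt)$, satisfies $i_S \comp h = i_T$ and $f_S = h \comp f_T$. This is exactly the relation ``isomorphism of state space'' by which span automata are identified in \Cref{def:spanautomata}, and conversely every such isomorphism arises from a slide, so $K$ is well-defined and injective on equivalence classes.

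Next I would check functoriality and that $K$ is fully faithful. Identities agree because the identity structured stateful span has the monoidal unit of $\mathbf{Iso}(\AutSt)$ as its state space and is sent to the one-state span automaton with no nontrivial transitions. For composition, \Cref{lemma:composingstatefulspans} computes the composite of two structured stateful spans as a pullback $E \times_B F$ whose state space is the monoidal product $(S,i_S,f_S) \boxtimes (T,i_T,f_T)$; since this product in $\AutSt$ is $(S \times T,(i_S,i_T),f_S \wedge f_T)$, the result matches term-for-term the pullback composition of span automata in \Cref{def:spanautomata}, including the product of initial states and the conjunction of final-state predicates, and the monoidal products agree componentwise for the same reason. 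Fullness is then immediate, every span automaton being $K$ of the evident structured stateful span; faithfulness follows from the same two-step argument as in \Cref{theorem:spangraphfreefeedback}, where an isomorphism of the image span automata decomposes into a state bijection $k$ (a legal slide in $\mathbf{Iso}(\AutSt)$) and a transition bijection $h$ (a span isomorphism), and these exhibit the two preimages as equal in $\GSt$. Bijectivity on objects upgrades the equivalence to an isomorphism.

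The main obstacle I expect is not conceptual but one of careful bookkeeping: ensuring that the structured part of the data, the initial state and especially the conjunction $f_S \wedge f_T$ of final-state predicates, is transported correctly by the monoidal functor $\mathsf{U}_{\mathsf{AutSpan}}$ together with its coherence isomorphism, so that the composite computed via \Cref{lemma:composingstatefulspans} genuinely has the product automaton state space prescribed in \Cref{def:spanautomata} rather than merely an isomorphic one. Everything else is a faithful transcription of the two results we are combining.
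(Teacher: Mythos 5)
Your proposal is correct and follows exactly the route the paper intends: the paper offers no written proof beyond the remark that the construction ``together with the results of \Cref{sec:structured-st-construction} leads to the result'', i.e.\ it leaves implicit precisely the argument you spell out---apply \Cref{th:statestructured} and transcribe the comparison functor of \Cref{theorem:spangraphfreefeedback} (via \Cref{lemma:composingstatefulspans}) while carrying the initial- and final-state data as in \Cref{prop:mealydeterministicautomata}. Your only loose phrase is calling the image of the identity a one-state automaton with ``no nontrivial transitions''; it has one self-loop per label $a \in A$, each labelled $(a,a)$, which is what you evidently mean.
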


\begin{example}
  By the universal property of the $\St(\bullet)$ construction, each Mealy automaton in $\MealyAut$ functorially induces a span automaton in $\SpanAut$ whose graph is the graph of the Mealy automaton.

  Consider the following Mealy automaton $\mathbf{X}$ in \Cref{figure:automatonspanmealy}, left. It has a state space $S_{X} = \{0,1,2\}$, left labels $A = \{a,b\}$ and trivial right labels $B = \mathbf{1}$, with initial state $i_{X} = 0$ and a unique final state $f_{X}(2) = \mathbf{true}$, while $f_{X}(0) = f_{X}(1) = \mathbf{false}$. Its transition function is given by $t_{X}(0,a) = 1$, $t_{X}(0,b) = 2$, $t_{X}(1,a) = 2$, $t_{X}(1,b) = 1$, $t_{X}(2,a) = 1$ and $t_{X}(2,b) = 1$.

  The corresponding span automaton $\mathbf{sX}$ (\Cref{figure:automatonspanmealy}, right) is not only the transition system, but also the markings for initial and final state. Explicitly, its set of edges -- or transitions -- is given by tuples
  \[E_{sX} = \{(0,a,1),(0,b,0),(1,a,1),(1,b,2),(2,a,1),(2,b,0)\},\]
  its state space is again $S_{sX} = \{0,1,2\}$, its left and right boundaries are again $A = \{a,b\}$ and $B = \mathbf{1}$, with initial state $i_{X} = 0$ and a unique final state $f_{X}(2) = \mathbf{true}$, while $f_{X}(0) = f_{X}(1) = \mathbf{false}$.

  \begin{figure}[h]
    \centering
\begin{tikzpicture}[x=0.75pt,y=0.75pt,yscale=-1,xscale=1]
\draw    (175,40) .. controls (177.69,26.22) and (207.61,25.54) .. (221.03,25.07) ;
\draw [shift={(223,25)}, rotate = 177.65] [color={rgb, 255:red, 0; green, 0; blue, 0 }  ][line width=0.75]    (10.93,-3.29) .. controls (6.95,-1.4) and (3.31,-0.3) .. (0,0) .. controls (3.31,0.3) and (6.95,1.4) .. (10.93,3.29)   ;
\draw   (163,50) .. controls (163,44.48) and (167.48,40) .. (173,40) .. controls (178.52,40) and (183,44.48) .. (183,50) .. controls (183,55.52) and (178.52,60) .. (173,60) .. controls (167.48,60) and (163,55.52) .. (163,50) -- cycle ;
\draw   (223,30) .. controls (223,24.48) and (227.48,20) .. (233,20) .. controls (238.52,20) and (243,24.48) .. (243,30) .. controls (243,35.52) and (238.52,40) .. (233,40) .. controls (227.48,40) and (223,35.52) .. (223,30) -- cycle ;
\draw   (203,90) .. controls (203,84.48) and (207.48,80) .. (213,80) .. controls (218.52,80) and (223,84.48) .. (223,90) .. controls (223,95.52) and (218.52,100) .. (213,100) .. controls (207.48,100) and (203,95.52) .. (203,90) -- cycle ;
\draw    (223,84) .. controls (237.56,81.28) and (242.14,56.29) .. (238.5,41.76) ;
\draw [shift={(238,40)}, rotate = 72.38] [color={rgb, 255:red, 0; green, 0; blue, 0 }  ][line width=0.75]    (10.93,-3.29) .. controls (6.95,-1.4) and (3.31,-0.3) .. (0,0) .. controls (3.31,0.3) and (6.95,1.4) .. (10.93,3.29)   ;
\draw    (223,35) .. controls (209.84,44.59) and (209.83,54.33) .. (212.77,76.29) ;
\draw [shift={(213,78)}, rotate = 262.22] [color={rgb, 255:red, 0; green, 0; blue, 0 }  ][line width=0.75]    (10.93,-3.29) .. controls (6.95,-1.4) and (3.31,-0.3) .. (0,0) .. controls (3.31,0.3) and (6.95,1.4) .. (10.93,3.29)   ;
\draw    (163,45) .. controls (135.58,26.61) and (128.7,73.88) .. (161.48,55.87) ;
\draw [shift={(163,55)}, rotate = 149.49] [color={rgb, 255:red, 0; green, 0; blue, 0 }  ][line width=0.75]    (10.93,-3.29) .. controls (6.95,-1.4) and (3.31,-0.3) .. (0,0) .. controls (3.31,0.3) and (6.95,1.4) .. (10.93,3.29)   ;
\draw    (243,35) .. controls (271.73,40.09) and (271.51,5.88) .. (244.26,24.13) ;
\draw [shift={(243,25)}, rotate = 324.72] [color={rgb, 255:red, 0; green, 0; blue, 0 }  ][line width=0.75]    (10.93,-3.29) .. controls (6.95,-1.4) and (3.31,-0.3) .. (0,0) .. controls (3.31,0.3) and (6.95,1.4) .. (10.93,3.29)   ;
\draw   (201,90) .. controls (201,83.37) and (206.37,78) .. (213,78) .. controls (219.63,78) and (225,83.37) .. (225,90) .. controls (225,96.63) and (219.63,102) .. (213,102) .. controls (206.37,102) and (201,96.63) .. (201,90) -- cycle ;
\draw    (201,90) .. controls (182.28,89.68) and (177.84,76.3) .. (173.54,62.7) ;
\draw [shift={(173,61)}, rotate = 72.38] [color={rgb, 255:red, 0; green, 0; blue, 0 }  ][line width=0.75]    (10.93,-3.29) .. controls (6.95,-1.4) and (3.31,-0.3) .. (0,0) .. controls (3.31,0.3) and (6.95,1.4) .. (10.93,3.29)   ;
\draw   (380,30) .. controls (380,18.95) and (388.95,10) .. (400,10) -- (470,10) .. controls (481.05,10) and (490,18.95) .. (490,30) -- (490,90) .. controls (490,101.05) and (481.05,110) .. (470,110) -- (400,110) .. controls (388.95,110) and (380,101.05) .. (380,90) -- cycle ;
\draw  [fill={rgb, 255:red, 0; green, 0; blue, 0 }  ,fill opacity=1 ] (402,50) .. controls (402,48.9) and (402.9,48) .. (404,48) .. controls (405.1,48) and (406,48.9) .. (406,50) .. controls (406,51.1) and (405.1,52) .. (404,52) .. controls (402.9,52) and (402,51.1) .. (402,50) -- cycle ;
\draw  [fill={rgb, 255:red, 0; green, 0; blue, 0 }  ,fill opacity=1 ] (462,28) .. controls (462,26.9) and (462.9,26) .. (464,26) .. controls (465.1,26) and (466,26.9) .. (466,28) .. controls (466,29.1) and (465.1,30) .. (464,30) .. controls (462.9,30) and (462,29.1) .. (462,28) -- cycle ;
\draw  [fill={rgb, 255:red, 0; green, 0; blue, 0 }  ,fill opacity=1 ] (452,96) .. controls (452,94.9) and (452.9,94) .. (454,94) .. controls (455.1,94) and (456,94.9) .. (456,96) .. controls (456,97.1) and (455.1,98) .. (454,98) .. controls (452.9,98) and (452,97.1) .. (452,96) -- cycle ;
\draw    (466,32) .. controls (480.71,36.85) and (479.01,8.77) .. (465.31,22.59) ;
\draw [shift={(464,24)}, rotate = 311.21] [color={rgb, 255:red, 0; green, 0; blue, 0 }  ][line width=0.75]    (4.37,-1.32) .. controls (2.78,-0.56) and (1.32,-0.12) .. (0,0) .. controls (1.32,0.12) and (2.78,0.56) .. (4.37,1.32)   ;
\draw    (459,96) .. controls (470.21,82.23) and (468.59,57.64) .. (464.33,33.83) ;
\draw [shift={(464,32)}, rotate = 79.54] [color={rgb, 255:red, 0; green, 0; blue, 0 }  ][line width=0.75]    (4.37,-1.32) .. controls (2.78,-0.56) and (1.32,-0.12) .. (0,0) .. controls (1.32,0.12) and (2.78,0.56) .. (4.37,1.32)   ;
\draw    (460,32) .. controls (447.08,52.6) and (443.43,67.61) .. (451.37,90.24) ;
\draw [shift={(452,92)}, rotate = 249.68] [color={rgb, 255:red, 0; green, 0; blue, 0 }  ][line width=0.75]    (4.37,-1.32) .. controls (2.78,-0.56) and (1.32,-0.12) .. (0,0) .. controls (1.32,0.12) and (2.78,0.56) .. (4.37,1.32)   ;
\draw    (406,46) .. controls (409.17,31.98) and (435.39,21.41) .. (458.25,25.65) ;
\draw [shift={(460,26)}, rotate = 192.43] [color={rgb, 255:red, 0; green, 0; blue, 0 }  ][line width=0.75]    (4.37,-1.32) .. controls (2.78,-0.56) and (1.32,-0.12) .. (0,0) .. controls (1.32,0.12) and (2.78,0.56) .. (4.37,1.32)   ;
\draw    (402,54) .. controls (387.16,68.84) and (385.95,36.06) .. (400.37,44.87) ;
\draw [shift={(402,46)}, rotate = 217.5] [color={rgb, 255:red, 0; green, 0; blue, 0 }  ][line width=0.75]    (4.37,-1.32) .. controls (2.78,-0.56) and (1.32,-0.12) .. (0,0) .. controls (1.32,0.12) and (2.78,0.56) .. (4.37,1.32)   ;
\draw    (449,96) .. controls (435.23,95.88) and (409.81,79.24) .. (404.38,55.82) ;
\draw [shift={(404,54)}, rotate = 79.54] [color={rgb, 255:red, 0; green, 0; blue, 0 }  ][line width=0.75]    (4.37,-1.32) .. controls (2.78,-0.56) and (1.32,-0.12) .. (0,0) .. controls (1.32,0.12) and (2.78,0.56) .. (4.37,1.32)   ;
\draw    (151,20) -- (165.75,38.44) ;
\draw [shift={(167,40)}, rotate = 231.34] [color={rgb, 255:red, 0; green, 0; blue, 0 }  ][line width=0.75]    (10.93,-3.29) .. controls (6.95,-1.4) and (3.31,-0.3) .. (0,0) .. controls (3.31,0.3) and (6.95,1.4) .. (10.93,3.29)   ;
\draw    (153,18) -- (149,22) ;
\draw    (400,24) .. controls (400.35,30.43) and (403.65,35.44) .. (403.98,44.13) ;
\draw [shift={(404,46)}, rotate = 270.73] [color={rgb, 255:red, 0; green, 0; blue, 0 }  ][line width=0.75]    (4.37,-1.32) .. controls (2.78,-0.56) and (1.32,-0.12) .. (0,0) .. controls (1.32,0.12) and (2.78,0.56) .. (4.37,1.32)   ;
\draw    (402,24) -- (398,24) ;
\draw    (380,55) -- (360,55) ;
\draw   (449,96) .. controls (449,93.24) and (451.24,91) .. (454,91) .. controls (456.76,91) and (459,93.24) .. (459,96) .. controls (459,98.76) and (456.76,101) .. (454,101) .. controls (451.24,101) and (449,98.76) .. (449,96) -- cycle ;
\draw (173,50) node  [font=\footnotesize]  {$0$};
\draw (233,30) node  [font=\footnotesize]  {$1$};
\draw (213,90) node  [font=\footnotesize]  {$2$};
\draw (179,12.4) node [anchor=north west][inner sep=0.75pt]  [font=\small]  {$a$};
\draw (166,80.4) node [anchor=north west][inner sep=0.75pt]  [font=\small]  {$b$};
\draw (269,17.4) node [anchor=north west][inner sep=0.75pt]  [font=\small]  {$a$};
\draw (244,60.4) node [anchor=north west][inner sep=0.75pt]  [font=\small]  {$a$};
\draw (196,50.4) node [anchor=north west][inner sep=0.75pt]  [font=\small]  {$b$};
\draw (126,40.4) node [anchor=north west][inner sep=0.75pt]  [font=\small]  {$b$};
\draw (477,30.4) node [anchor=north west][inner sep=0.75pt]  [font=\small]  {$a$};
\draw (471,61.4) node [anchor=north west][inner sep=0.75pt]  [font=\small]  {$a$};
\draw (411,13.4) node [anchor=north west][inner sep=0.75pt]  [font=\small]  {$a$};
\draw (388,27.4) node [anchor=north west][inner sep=0.75pt]  [font=\small]  {$b$};
\draw (405,82.4) node [anchor=north west][inner sep=0.75pt]  [font=\small]  {$b$};
\draw (438,47.4) node [anchor=north west][inner sep=0.75pt]  [font=\small]  {$b$};

\end{tikzpicture}
\caption{Mealy automaton and associated span automaton.}\label{figure:automatonspanmealy}
  \end{figure}
\end{example}

\section{Conclusions and Further Work}\label{sec:conclusions}\label{sec:newconclusions}
\subsection{Discussion}

We began this manuscript by pointing out the fragmented landscape of models for concurrent software.
We have now formally proven that any theory of predicates with a notion of feedback that accepts reasonable axioms (A1-A5) must already contain a simulation of $\SpanGraphV$: the algebra of open transition systems of Katis, Sabadini and Walters. This can be considered as being surprising given the dearth of results that establish the canonicity of models of concurrency. In this section we frame our contribution by expanding on its relevance for the wider research landscape.

\subsubsection{Why \emph{open} transition systems?}

Transition systems are the mainstay of concurrency theory: the firing semantics of a Petri net or the unfolding of the behaviour of a process algebra term are both important examples.
In recent years significant effort~\cite{Rathke2014PN,fong2019invitation,bonchi19} has been devoted to developing the mathematical foundations of \emph{compositional} modelling. Roughly speaking, this means the study of semantic foundations that include the ability to compose/decompose systems -- understood in a general sense -- into more primitive components in a homomorphic/functorial way, so that the behaviour of the composite is determined by that of the components. Conceptually, this is a strengthening of the foundational computer science principle of \emph{modularity}: our models do not merely need modular descriptions, these descriptions have to correspond homomorphically to a modular description of their behaviours. The $\SpanGraphV$ algebra is an application of the principle of compositionality for transition systems.

\smallskip
We live in an age of complexity, with enormous implications for modern software development. Software is nowadays rarely about sandbox development of a software product intended to be run on a single machine. Rather, software engineering is evolving to a discipline that deals complex networks consisting of interactions of libraries, services, APIs both locally and in the cloud. We believe that compositional approaches, given that they are a tool to tame this complexity, will become increasingly important in the future.

\subsubsection{Conceptual implications}

From an applied point of view, our results inform a minimal software architecture for a library that constructs and analyzes these modular transition systems, that are canonical models of concurrency. For instance, an object-oriented programmer may
\begin{itemize}[label=$\bullet$]
    \item define a specific class for ``theory of resources'', providing methods for the fundamental operations of composing, tensoring, identities and swapping, and providing subclasses for ``resources'' and ``processes'', if necessary;
    \item implement an abstract method that computes the $\St(\bullet)$ construction: this method will take a theory of processes and instantiate the free feedback category as a new theory of processes;
    \item work uniformly with multiple theories of processes and the theories of automata they generate, providing auxiliary methods (for reachability, connectedness, joining, \dots) that work across different theories of transition systems;
    \item thus, thanks to our results, obtain at the end of the day a graph -- in $\SpanGraph$ -- that can be understood as a transition system and analyzed with the same library.
\end{itemize}
A functional programmer may want to follow the architecture of the public Haskell implementation of the ideas of this paper \cite{spangraphroman}, where our running example (\Cref{fig:norintro}) is showcased.

The problem we solve is one of abstraction. When faced with the task of implementing models for transition systems, we could be tempted to simply implement each one of them separately.
Our result shows that a much more succinct choice is possible.
Reducing the lines of code is important: transition systems are usually formally analyzed to prove correctness; the less lines there are to analyze in the core implementation, the easiest it will be to be sure of its correctness.

Open transition systems allow us to construct transition systems from a few building blocks; compositionality eases both implementation and verification. Specifically, this could open an avenue to study the problem of compositional verification, where the automatic analysis of a global system must be modularly reduced to the analysis of its components.

\subsection{Conclusion}
We have characterized $\SpanGraphV$, an algebra of open transition systems, as the free \categoryWithFeedback{} over the category of spans of functions.
To do so, we have used the  $\St(\bullet)$ construction, characterized as the free \categoryWithFeedback{} in \cite{katis02}.
We have given this characterization more generally, for any category \(\catC\) with finite limits:
the category \({\Span(\Graph(\catC))}_{\ast}\) of spans of graphs in \(\catC\) is the free \categoryWithFeedback{} over the category of spans in \(\catC\).
Finally, we have defined a generalization of feedback categories to capture automata with initial and final states.

Further work will look at timed~\cite{cherubini2004timing} and probabilistic~\cite{dFSW2011compositionalmarkov,dFSW2011compositionalmarkov2} versions of the Cospan/Span model to connect it with recent work on modeling probabilistic programs with feedback categories~\cite{monoidalStreams22}.
We also plan to investigate the relationship between generalized \hyperlink{linkcategorywithfeedback}{feedback categories} (\Cref{sec:generalizingfeedback}) to approaches based on guarded recursion~\cite{goncharov18} and coalgebras~\cite{clouston15,milius17}.

\bibliography{bibliography}

\end{document}